\DeclareMathOperator*{\argmax}{arg\,max}
\DeclareMathOperator*{\argmin}{arg\,min}
\DeclareMathOperator*{\rank}{rank}
\DeclareMathOperator*{\sign}{sign}
\DeclareMathOperator*{\diag}{diag}
\newcommand{\eq}[1]{(\ref{eq:#1})}
\newcommand{\lem}[1]{Lemma~\ref{lemma:#1}}
\newcommand{\thm}[1]{Theorem~\ref{theorem:#1}}
\newcommand{\dfn}[1]{Definition~\ref{definition:#1}}
\newcommand{\sect}[1]{Section~\ref{sec:#1}}
\newcommand{\fig}[1]{Figure~\ref{fig:#1}}
\newcommand{\alg}[1]{Algorithm~\ref{alg:#1}}
\newcommand{\tab}[1]{Table~\ref{table:#1}}
\journal{Numerical Linear Algebra with Applications}
\begin{document}

\title{Accelerated alternating minimization algorithm for low-rank approximations in the Chebyshev norm}

\author[1,2]{Stanislav Morozov}

\author[1,3]{Dmitry Zheltkov}

\author[4,1]{Alexander Osinsky}

\authormark{MOROZOV \textsc{et al.}}
\titlemark{Accelerated alternating minimization algorithm for low-rank approximations in the Chebyshev norm}

\address[1]{\orgname{Marchuk Institute of Numerical Mathematics of the Russian Academy of Sciences}, \orgaddress{Gubkin Street 8, Moscow, 119333, Russia}}

\address[2]{\orgname{National Research University Higher School of Economics}, \orgaddress{Pokrovsky blvd., 11, Moscow, 109028, Russia}}

\address[3]{\orgname{Lomonosov Moscow State University}, \orgaddress{GSP-1, Leninskie Gory, Moscow, 119991, Russia}}

\address[4]{\orgname{Skolkovo Institute of Science and Technology}, \orgaddress{Bolshoy blvd., 30,  bld. 1, Moscow, 121205, Russia}}

\corres{Corresponding author Stanislav Morozov, \email{stanis-morozov@yandex.ru}}


\fundingInfo{Russian Science Foundation, Project 25-71-00096 (the project webpage is available \url{https://rscf.ru/project/25-71-00096/}).}

\abstract[Abstract]{Nowadays, low-rank approximations of matrices are an important component of many methods in science and engineering. Traditionally, low-rank approximations are considered in unitary invariant norms, however, recently element-wise approximations have also received significant attention in the literature. In this paper, we propose an accelerated alternating minimization algorithm for solving the problem of low-rank approximation of matrices in the Chebyshev norm. Through the numerical evaluation we demonstrate the effectiveness of the proposed procedure for large-scale problems. We also theoretically investigate the alternating minimization method and introduce the notion of a $2$-way alternance of rank $r$. We show that the presence of a $2$-way alternance of rank $r$ is the necessary condition of the optimal low-rank approximation in the Chebyshev norm and that all limit points of the alternating minimization method satisfy this condition.}

\keywords{low-rank, Chebyshev norm, uniform approximation, alternating minimization}

\jnlcitation{\cname{%
\author{Morozov S.},
\author{Zheltkov D.}, and
\author{Osinsky A}}.
\ctitle{Accelerated alternating minimization algorithm for low-rank approximations in the Chebyshev norm}, \cjournal{\it Numerical Linear Algebra with Applications}, \cvol{2025;00(00):1--18}.}

\maketitle

%

\section{Introduction}
\label{sec:intro}
Low-rank matrix approximation algorithms are a crucial component in different fields, such as differential equations \cite{matveev2015fast}, computational fluid dynamics \cite{son2014data}, recommender systems \cite{he2016fast} and machine learning \cite{sainath2013low}. Nowadays, most of the methods tackle the problem of low-rank matrix approximation in the unitary invariant norms. For such norms there exist efficient algorithms, e.g. SVD, that provide the optimal approximation. The quality of the approximation for the unitary invariant norms is related to the decay rate of singular values. Only matrices with fast decay of singular values can be reasonably approximated in these norms. However, in some applications, matrices arise that can be successfully approximated by low-rank structures in other norms, independently of the singular values decay rate \cite{udell2019big}.

In this paper, we address the problem of building low-rank approximations in the Chebyshev norm. Let $A \in \mathbb{R}^{m \times n}$ and $r \ge 1$ be an integer. Then the problem of rank-$r$ approximation reads as
\begin{equation}
\label{eq:main_problem}
    \|A - UV^T\|_C \to \min\limits_{U \in \mathbb{R}^{m\times r}, V \in \mathbb{R}^{n \times r}},
\end{equation}
where $\|X\|_C = \max\limits_{i,j}|x_{ij}|$. To solve the problem \eq{main_problem}, in \cite{zamarashkin2022best} the authors propose the \textit{alternating minimization method}. The method alternately solves the problem
\begin{equation}
\label{eq:one_matrix_problem}
    \|A - UV^T\|_C \to \min\limits_{U \in \mathbb{R}^{m\times r}}
\end{equation}
for a fixed $V \in \mathbb{R}^{n \times r}$ and
\begin{equation*}
    \|A - UV^T\|_C \to \min\limits_{V \in \mathbb{R}^{n\times r}}
\end{equation*}
for a fixed $U \in \mathbb{R}^{m \times r}$. One can see that \eq{one_matrix_problem} can be decomposed to the set of problems of the \textit{best uniform approximation}.
\begin{equation}
\label{eq:uniform_approximation_problem}
    \|a - Vu\|_\infty \to \min\limits_{u \in \mathbb{R}^{r}}.
\end{equation}
To the best of our knowledge, the first algorithm for solving \eq{uniform_approximation_problem} is proposed in \cite{zamarashkin2022best}.

The main contribution of this paper is as follows. We propose the accelerated algorithm for solving \eq{uniform_approximation_problem}, which is equivalent to the one in \cite{zamarashkin2022best} in precise arithmetic, but has lower complexity. We also propose the equioscillation criterion for a vector to be the solution of \eq{uniform_approximation_problem}, which is similar to the famous Chebyshev equioscillation theorem for the approximation of continuous functions by polynomials. Moreover, we analyze the alternating minimization method for solving \eq{main_problem} and propose the notion of a \textit{2-way alternance of rank $r$}, which generalizes the similar concept proposed in \cite{daugavet1971uniform} for rank-$1$ approximations. We extend this notion to the arbitrary rank and demonstrate that the presence of a 2-way alternance of rank-$r$ is the necessary condition for a pair of matrices $(U, V)$ to be the solution of \eq{main_problem} and that all limit points of the alternating minimization method possess this structure. Finally, we conduct the extensive numerical evaluation for constructing low-rank approximations in the Chebyshev norm on large-scale matrices. We also compare the results with the alternating projections method \cite{budzinskiy2023quasioptimal}, another approach for solving \eq{main_problem}.

The rest of the paper is organized as follows. In \sect{related} we present an overview of recent results in the field of low-rank Chebyshev approximations. \sect{prelim} contains the basic results connected with solving the problem of the best uniform approximation \eq{uniform_approximation_problem}. In \sect{equiosc} we present the equioscillation criterion for a vector to be the solution of the problem \eq{uniform_approximation_problem}. In \sect{accelerated_alg} we describe the accelerated uniform approximation algorithm. \sect{alternating_minim_description} contains the formal description of the alternating minimization method and in \sect{rank_r_alternance} we provide the notion of a 2-way alternance of rank-$r$ and demonstrate that the presence of the alternance is the necessary condition of the optimal solution to \eq{main_problem}, as well as all limit points of the alternating minimization method fulfill this condition. Finally, \sect{numerical} contains the numerical evaluation of the alternating minimization method for solving \eq{main_problem} and \sect{conclusion} concludes the paper.

\section{Related work}
\label{sec:related}
A matrix admits efficient low-rank approximation in unitary invariant norms if it has fast decay of singular values. Otherwise, there is no reasonable low-rank approximation in such norms. The situation turns out to be different if we consider approximations in the Chebyshev norm. The seminal result from \cite{udell2019big} states that for any matrix $X \in \mathbb{R}^{m \times n}$, where $m \ge n$, and any $\varepsilon > 0$ there exists a matrix $Y \in \mathbb{R}^{m \times n}$ of rank $r$, where
\begin{equation}
\label{eq:rank_est}
    r \le \lceil 72\log{(2n + 1)}/\varepsilon^2 \rceil,
\end{equation}
such that $\|X - Y\|_C \le \varepsilon \|X\|_2$. It means that for any fixed $\varepsilon > 0$ and a sequence of matrices $\{X_n\}_n$ with the bounded spectral norm and growing size $n$, there exists a sequence of matrices $\{Y_n\}_n$ of rank $O(\log n)$, such that $\|X_n - Y_n\|_C \le \varepsilon$. Note that at least the constant in \eq{rank_est} is overestimated (see also \cite{budzinskiy2024distance} for sharper estimates with the notion of $\mu$-coherence). For example, we demonstrate in \sect{numerical} that the identity matrix of size $16,384$ can be approximated with accuracy $0.1$ by a matrix of rank $333$.

The problem of constructing low-rank approximations in the Chebyshev norm is challenging. In \cite{gillis2019low} it is shown that even for rank-1 approximations the problem of checking whether for a matrix $A \in \mathbb{R}^{m\times n}$ and a number $\varepsilon > 0$ there exist vectors $u \in \mathbb{R}^m$ and $v \in \mathbb{R}^n$ such that $\|A - uv^T\|_C < \varepsilon$, is NP-complete.

To the best of our knowledge, the first algorithm for solving the problem of low-rank Chebyshev approximation was proposed in \cite{daugavet1971uniform} for the special case of rank-$1$ approximations. Let $A \in \mathbb{R}^{m \times n}$ be a matrix to be approximated and $v^{(0)} \in \mathbb{R}^n$. Then the author in \cite{daugavet1971uniform} uses the \textit{alternating minimization method}, which solves the problems
\begin{equation*}
    u^{(t)} \leftarrow \argmin\limits_{u \in \mathbb{R}^m} \|A - u v^{(t)}\|_C, \quad v^{(t+1)} \leftarrow \argmin\limits_{v \in \mathbb{R}^n} \|A - u^{(t)} v\|_C
\end{equation*}
for $t = 0, 1, 2, \dots$. Also \cite{daugavet1971uniform} introduces the notion of \textit{$2$-way alternance} and demonstrate that all limit points of the sequence $\{(u^{(t)}, v^{(t)})\}_t$ possess this alternance. It is also shown in \cite{daugavet1971uniform} that if there is a $2$-way alternance for a matrix $A \in \mathbb{R}^{m\times n}$ and vectors $\hat{u} \in \mathbb{R}^m$ and $\hat{v} \in \mathbb{R}^n$, then for any $u \in \mathbb{R}^m$ such that $\sign u = \sign \hat{u}$ we have $\|A - \hat{u}\hat{v}^T\|_C \le \|A - uv^T\|_C$ for any $v \in \mathbb{R}^n$. The similar property is true for any $v \in \mathbb{R}^n$ such that $\sign v = \sign \hat{v}$. This results was used in \cite{morozov2023optimal} to derive a method that is capable of constructing \textit{optimal} rank-1 Chebyshev approximations.

In \cite{zamarashkin2022best} the authors proposed a method for solving the problem
\begin{equation}
\label{eq:uniform_approximation_problem2}
    \|a - Vu\|_\infty \to \min\limits_{u \in \mathbb{R}^{r}}
\end{equation}
and generalized the alternating minimization method to arbitrary rank approximations. In this paper, we propose a new method for solving \eq{uniform_approximation_problem2}, which has the lower complexity than the one proposed in \cite{zamarashkin2022best}. We also extend the notion of a $2$-way alternance to arbitrary rank and demonstrate that the presence of this structure is the necessary condition of the optimal approximation and that all limit points of the alternating minimization method satisfy this condition.

Another line of research is related to the \textit{alternating projections} method \cite{budzinskiy2023quasioptimal, budzinskiy2024distance, budzinskiy2024entrywise, budzinskiy2024big}. The method alternately projects to the set of low-rank matrices via SVD and to the $\varepsilon$--ball in the Chebyshev norm with the center being the target matrix. The value $\varepsilon$ is found using binary search. In \sect{numerical} we compare alternating minimization and alternating projections methods for building low-rank Chebyshev approximations.

\section{Preliminaries}
\label{sec:prelim}
Let $V \in \mathbb{R}^{n \times r}$, where $n \ge r$ and $a \in \mathbb{R}^n$. In this section, we provide the fundamental properties of the problem
\begin{equation}
\label{eq:uniform_approx_problem}
\|Vu - a\|_\infty \to \min\limits_{u \in \mathbb{R}^r}.
\end{equation}
Here we present the required theory without proofs, for further details see \cite{zamarashkin2022best}.

Let $S$ be an ordered set of integers $1 \le i_1, i_2, \dots, i_k \le n$. Here and further we denote with parentheses an ordered set, e.g. $S=(i_1, i_2, \dots, i_k)$. Let us denote by $V(S)$ the submatrix of matrix $V$, containing the rows with the numbers from the set $S$. If $n = r + 1$, we denote by $V^{\setminus j}$ the submatrix of matrix $V$ containing all rows except row number $j$, that is, $V^{\setminus j} = V((1, \dots, j-1, j+1, \dots, r+1))$. For matrices of size $(r+1)\times r$ we also denote by $D_j(V) = \det{V^{\setminus j}}$. Similarly, if $a \in \mathbb{R}^n$, we denote by $a(S)$ the subvector of vector $a$, containing the elements with the numbers from the set $S$ and $a_{\setminus j}$ denotes $a((1, \dots, j-1, j+1, \dots, n))$.

The key concept associated with solving the problem \eq{uniform_approx_problem} is the notion of the \textit{Chebyshev matrix}.
The solution to the problem \eq{uniform_approx_problem} is not always unique. The necessary and sufficient condition for \eq{uniform_approx_problem} to have the unique solution for any right-hand side $a$ can be provided by the following (see \cite[Theorem 2, Theorem 3]{zamarashkin2022best}).
\begin{definition}
\label{definition:chebyshev_matrix}
A matrix $V \in \mathbb{R}^{n \times r}$ with $n \ge r$ is called \textit{Chebyshev} if all its $r \times r$ submatrices are non-singular.
\end{definition}
\begin{theorem}
\label{theorem:exists_unique_cont}
    Let $V \in \mathbb{R}^{n \times r}$ be a Chebyshev matrix and $a \in \mathbb{R}^n$. Then the solution to the problem \eq{uniform_approx_problem} exists, is unique and continuously depends on the matrix $V$ and right-hand side $a$.
\end{theorem}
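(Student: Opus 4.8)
The plan is to treat the three claims --- existence, uniqueness, continuity --- in that order, leaning on the Chebyshev property of $V$ at each stage. For existence, I would first observe that $f(u) = \|Vu - a\|_\infty$ is a continuous, convex, piecewise-linear function on $\mathbb{R}^r$, so it suffices to show it is coercive, i.e.\ $f(u) \to \infty$ as $\|u\| \to \infty$. This is where the Chebyshev hypothesis enters: since every $r \times r$ submatrix of $V$ is non-singular, $V$ has full column rank $r$, hence $\|Vu\|_\infty \ge c\|u\|_\infty$ for some $c > 0$ (equivalence of norms on the image, or a compactness argument on the unit sphere), and then $f(u) \ge c\|u\|_\infty - \|a\|_\infty \to \infty$. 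A minimizer therefore exists.

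For uniqueness I would argue by contradiction: suppose $u_1 \ne u_2$ are both minimizers with common optimal value $\rho = f(u_1) = f(u_2)$. By convexity the midpoint $\bar u = (u_1 + u_2)/2$ is also a minimizer, and for each coordinate $i$ one has $|(V\bar u - a)_i| \le \tfrac12|(Vu_1 - a)_i| + \tfrac12|(Vu_2 - a)_i| \le \rho$, with equality in the first inequality only if $(Vu_1 - a)_i$ and $(Vu_2 - a)_i$ have the same sign and both equal $\rho$ in absolute value. Counting the active constraints (those indices $i$ where $|(V\bar u - a)_i| = \rho$), a minimizer of a piecewise-linear convex function with full column rank must have at least $r+1$ active indices — otherwise one could move in a direction $d$ with $Vd \ne 0$ keeping all active residuals' absolute values decreasing to first order, contradicting optimality (here the non-singularity of every $r\times r$ submatrix guarantees no such slack direction can be hidden among fewer than $r+1$ rows). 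On those $\ge r+1$ active indices the residual vectors $Vu_1 - a$ and $Vu_2 - a$ agree (same sign, same magnitude $\rho$), so $V(u_1 - u_2)$ vanishes on at least $r+1$ rows; picking any $r$ of them and invoking non-singularity of that $r\times r$ submatrix forces $u_1 - u_2 = 0$, a contradiction. (Alternatively, one may simply cite \cite[Theorem 2, Theorem 3]{zamarashkin2022best} as the excerpt permits, but I prefer recording the mechanism since it is reused in the equioscillation analysis.)

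For continuous dependence, let $u^*(V,a)$ denote the unique solution. I would use a standard stability-of-argmin argument: fix $(V_0, a_0)$ and a sequence $(V_k, a_k) \to (V_0, a_0)$ with each $V_k$ Chebyshev. The minimal values $\rho_k = \min_u \|V_k u - a_k\|_\infty$ are bounded (e.g.\ by $\|a_k\|_\infty$), and from the uniform coercivity estimate $\|V_k u\|_\infty \ge c_k \|u\|_\infty$ with $c_k$ bounded below near $V_0$ (the smallest singular value is continuous and positive at $V_0$), the minimizers $u_k = u^*(V_k, a_k)$ lie in a fixed bounded set. Any convergent subsequence $u_{k_j} \to \tilde u$ satisfies, by joint continuity of $(V,u,a) \mapsto \|Vu - a\|_\infty$ and lower semicontinuity of the value function, $\|V_0 \tilde u - a_0\|_\infty \le \liminf \rho_{k_j} \le \|V_0 u^*(V_0,a_0) - a_0\|_\infty$, so $\tilde u$ is a minimizer at $(V_0,a_0)$; by the already-established uniqueness, $\tilde u = u^*(V_0, a_0)$. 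Since every subsequence has a further subsequence converging to the same limit, the whole sequence converges, giving continuity.

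The main obstacle is the uniqueness step, specifically the claim that an optimal residual of a full-rank overdetermined $\ell_\infty$ problem has at least $r+1$ active (equioscillating) indices and that the Chebyshev condition prevents degeneracy — this is exactly the finite-dimensional analogue of Haar's condition in Chebyshev approximation theory, and getting the active-set count and the sign bookkeeping right is the delicate part. Everything else (coercivity, convexity, the argmin-stability template for continuity) is routine once full column rank is in hand.
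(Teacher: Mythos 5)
The paper itself offers no proof of this theorem: it is stated as imported background, with a pointer to \cite[Theorem 2, Theorem 3]{zamarashkin2022best}, so there is no ``paper's route'' to compare against. Your self-contained argument is correct and is the standard one for discrete Chebyshev (Haar-condition) approximation: coercivity from full column rank for existence; the ``at least $r+1$ active indices'' count plus equality in the triangle inequality at the midpoint for uniqueness (this active-index count is exactly \lem{rp1points}, which the paper also imports from \cite{zamarashkin2022best}); and the bounded-minimizers/subsequence template for continuity, where uniqueness of the limit problem upgrades subsequential convergence to convergence of the whole sequence. Two small points worth tightening: (i) the active-index argument presumes $n>r$ and $\rho>0$ --- when $n=r$ or $\rho=0$ the residual vanishes on $r$ linearly independent rows and uniqueness is immediate from injectivity of $V$, so these cases should be dispatched separately rather than folded into the sign bookkeeping; (ii) in the continuity step what you actually need and implicitly use is upper semicontinuity of the value function, $\limsup_k \rho_k \le \rho_0$, obtained by testing the perturbed problem at the unperturbed minimizer, not lower semicontinuity as written. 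Neither affects the validity of the argument.
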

The solution to the problem possesses the following property (see \cite[Proposition 2]{zamarashkin2022best}).
\begin{lemma}
\label{lemma:rp1points}
Let a matrix $V \in \mathbb{R}^{n \times r}$, $n > r$, be Chebyshev, $a \in \mathbb{R}^n$, $\hat{u} \in \mathbb{R}^r$ is the solution of \eq{uniform_approx_problem}. Let $w = a - V\hat{u}$. Then there are at least $r+1$ distinct integers $1 \le i_1 < \dots < i_{r+1} \le n$ such that
\begin{equation*}
    |w_{i_j}| = \|w\|_\infty, ~~~ j = 1, \dots, r + 1.
\end{equation*}
\end{lemma}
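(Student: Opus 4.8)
We want to show that the residual $w = a - V\hat{u}$ attains its maximum absolute value at at least $r+1$ distinct indices. The natural approach is proof by contradiction combined with a perturbation argument. Suppose the set $E = \{i : |w_i| = \|w\|_\infty\}$ has at most $r$ elements, say $E = \{i_1, \dots, i_k\}$ with $k \le r$. The plan is to construct a direction $d \in \mathbb{R}^r$ such that moving $\hat{u}$ slightly to $\hat{u} + td$ strictly decreases the residual at every index in $E$, while not increasing it too much elsewhere, contradicting the optimality of $\hat{u}$.

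Here is how I would carry it out. First, consider the $k \times r$ submatrix $V(E)$ formed by the rows of $V$ indexed by $E$; since $k \le r$ and $V$ is Chebyshev (so it has full column rank $r$ — indeed every $r$ rows are linearly independent, which forces $\operatorname{rank} V = r$), the rows of $V(E)$ are linearly independent, hence $V(E)$ is surjective as a map $\mathbb{R}^r \to \mathbb{R}^k$. Therefore there exists $d \in \mathbb{R}^r$ with $(V d)_{i} = \operatorname{sign}(w_i)$ for every $i \in E$. For small $t > 0$, a first-order expansion gives $|w_i - t (Vd)_i| = |w_i| - t|(Vd)_i|\cdot\operatorname{sign}(w_i)^2 + O(t^2) \le \|w\|_\infty - t + O(t^2)$ for $i \in E$, so the residual strictly decreases at those indices. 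For indices $j \notin E$ we have $|w_j| < \|w\|_\infty$ strictly, and since there are finitely many of them, $\max_{j \notin E}|w_j| = \|w\|_\infty - \delta$ for some $\delta > 0$; hence for $t$ small enough, $|w_j - t(Vd)_j| \le \|w\|_\infty - \delta + t\|Vd\|_\infty < \|w\|_\infty$. Combining both cases, for sufficiently small $t > 0$ we get $\|a - V(\hat{u} + td)\|_\infty < \|w\|_\infty = \|a - V\hat{u}\|_\infty$, contradicting the fact that $\hat{u}$ is a minimizer.

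**Where the effort goes.** The main subtlety — though not really an obstacle — is handling the quadratic remainder carefully: one must choose $t$ small enough that the $O(t^2)$ term on the active set does not overwhelm the linear decrease $-t$, and simultaneously small enough that the inactive indices stay below the threshold; choosing $t < \min\{\delta/(2\|Vd\|_\infty),\ 1/(2\|Vd\|_\infty^2), \dots\}$ and being explicit about constants settles this. An alternative, perhaps cleaner, route avoids remainders entirely by an exact convexity argument: the function $u \mapsto \|a - Vu\|_\infty$ is convex and piecewise linear, so at a minimizer $\hat u$ every direction $d$ has nonnegative directional derivative, i.e. $\max_{i \in E} \big({-}\operatorname{sign}(w_i)(Vd)_i\big) \ge 0$ for all $d$; if $|E| \le r$, surjectivity of $V(E)$ lets us pick $d$ making all these quantities equal to $-1$, a contradiction. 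I would present this second version, as it is shorter and sidesteps the perturbation bookkeeping; the only ingredient it needs beyond linear algebra is the standard formula for the directional derivative of a max of affine functions, which is elementary.
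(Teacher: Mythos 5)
Your argument is correct. Note, however, that the paper does not prove this lemma itself: it is imported without proof from the cited reference (Proposition 2 of \cite{zamarashkin2022best}), so there is no in-paper proof to compare against; your contradiction-plus-perturbation argument is the standard one for results of this type. The key step checks out: if the active set $E$ has $k\le r$ elements, the Chebyshev property lets you extend $E$ to $r$ row indices whose $r\times r$ submatrix is nonsingular, so $V(E)$ has full row rank and you can solve $(Vd)_i=\sign(w_i)$ on $E$; the only cosmetic remark is that your $O(t^2)$ remainders are actually identically zero, since $|w_i - t\sign(w_i)| = |w_i| - t$ exactly for $0\le t\le |w_i|$, so even the first version needs no remainder bookkeeping --- your preferred convexity/directional-derivative formulation is equivalent and equally valid. (The degenerate case $w=0$ is harmless: then every index is active and $n>r$ gives the conclusion outright.)
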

Let $J = (1, 2, \dots, n)$ and $J' \subset J$. Let
\begin{equation*}
	\mu(J') = \min\limits_{u \in \mathbb{R}^{r}} \left\| a(J') - V(J') u \right\|_{\infty}.
\end{equation*}
The following definition reveals many important properties connected with the problem \eq{uniform_approx_problem}.
\begin{definition}
\label{definition:charact_set}
A set $J'$ is called a \textit{characteristic set} if $\mu(J) = \mu(J')$ and for any subset $J'' \subsetneq J'$ $\mu(J'') < \mu(J)$ holds.
\end{definition}
The proof of the following theorem can be found in \cite[Theorem 6]{zamarashkin2022best}.
\begin{theorem}
\label{theorem:charact_set_size}
Let $V \in \mathbb{R}^{n \times r}$, where $n > r$ and $a \in \mathbb{R}^n$ does not belong to the image of $V$. Then there exists at least one characteristic set consisting of at most $r + 1$ elements. Moreover, if the matrix $V$ is Chebyshev, then any characteristic set consists of at least $r + 1$ elements.
\end{theorem}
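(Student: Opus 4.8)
The plan is to handle the two assertions by separate, fairly classical arguments. For the existence of a characteristic set of size at most $r+1$, I would rewrite each of the $n$ constraints $|a_i-(Vu)_i|\le\rho$ as a closed convex \emph{slab} $C_i(\rho)=\{u\in\mathbb{R}^{r}:\,|a_i-(Vu)_i|\le\rho\}$ in the $r$-dimensional variable space and invoke Helly's theorem (whose Helly number in $\mathbb{R}^{r}$ is $r+1$) to exhibit a subset of at most $r+1$ indices on which the restricted optimal value already equals $\mu(J)$; a minimal subset of that set will be the desired characteristic set. For the lower bound in the Chebyshev case, I would use the definition of a Chebyshev matrix to see that any $k\le r$ rows of $V$ are linearly independent, so that a restricted problem on $k\le r$ indices is solvable exactly and has optimal value $0$, contradicting $\mu(J)>0$.

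Throughout I use two elementary facts: $\mu$ is monotone under inclusion (if $J''\subseteq J'$ then $\|a(J'')-V(J'')u\|_\infty\le\|a(J')-V(J')u\|_\infty$ for every $u$, hence $\mu(J'')\le\mu(J')$), and $\mu(J)>0$ because $a\notin\im V$ means $a=Vu$ has no solution. Put $\rho^{*}=\mu(J)$ and
\[
\rho^{**}=\max\bigl\{\mu(S):S\subseteq J,\ |S|\le r+1\bigr\},
\]
so that $\rho^{**}\le\rho^{*}$ by monotonicity. If $\rho^{**}<\rho^{*}$, fix $\rho$ with $\rho^{**}<\rho<\rho^{*}$; then for every $S\subseteq J$ with $|S|\le r+1$ we have $\mu(S)\le\rho^{**}<\rho$, so some $u$ lies in $\bigcap_{i\in S}C_i(\rho)$, and in particular every $(r+1)$-element subfamily of $\{C_i(\rho)\}_{i\in J}$ has a common point. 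Since $n>r$, Helly's theorem gives $\bigcap_{i\in J}C_i(\rho)\ne\emptyset$, i.e. $\mu(J)\le\rho<\rho^{*}$, a contradiction. Hence $\rho^{**}=\rho^{*}$, and as there are finitely many subsets the maximum is attained at some $S^{*}$ with $|S^{*}|\le r+1$ and $\mu(S^{*})=\mu(J)$. Among all subsets $T\subseteq S^{*}$ with $\mu(T)=\mu(J)$ — a nonempty collection, since $S^{*}$ itself qualifies, consisting of nonempty sets because $\mu(\emptyset)=0<\mu(J)$ — pick one, $J'$, of least cardinality. For any $J''\subsetneq J'$: if $J''=\emptyset$ then $\mu(J'')=0<\mu(J)$; if $J''\ne\emptyset$ then $J''\subseteq S^{*}$ and $|J''|<|J'|$ force $\mu(J'')\ne\mu(J)$, so $\mu(J'')<\mu(J)$ by monotonicity. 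Hence $J'$ is a characteristic set with $|J'|\le|S^{*}|\le r+1$.

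For the second assertion, let $V$ be Chebyshev and suppose, for contradiction, that some characteristic set $J'$ has $k=|J'|\le r$. Since $n>r$, extend $J'$ to a set of $r$ distinct row indices; the corresponding $r\times r$ submatrix of $V$ is nonsingular, hence its rows are linearly independent, and in particular the $k$ rows of $V$ indexed by $J'$ are linearly independent. Therefore $V(J')\in\mathbb{R}^{k\times r}$ has rank $k$, the map $u\mapsto V(J')u$ is onto $\mathbb{R}^{k}$, and there is $u^{*}$ with $V(J')u^{*}=a(J')$; this gives $\mu(J')=\|a(J')-V(J')u^{*}\|_\infty=0$. But $\mu(J')=\mu(J)>0$ since $a\notin\im V$, a contradiction. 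Hence every characteristic set has at least $r+1$ elements.

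The monotonicity of $\mu$, the shrinking step, and the rank count in the Chebyshev case are routine; the genuine content is the cardinality bound $r+1$ in the first assertion, for which the $r$-dimensionality of the variable $u$ is exactly what makes Helly's theorem applicable. An equivalent route is linear-programming duality for the formulation ``minimize $\rho$ subject to $-\rho\le a_i-(Vu)_i\le\rho$, $i\in J$'': the dual has $r+1$ equality constraints, a basic optimal dual solution has at most $r+1$ nonzero components, and the indices carrying those components — at most $r+1$ of them, since $\rho^{*}>0$ prevents both inequalities at a given $i$ from being tight — form a set on which the optimal value is still $\mu(J)$.
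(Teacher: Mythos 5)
Your proof is correct, but there is nothing in the paper to compare it against: \sect{prelim} explicitly presents its theory ``without proofs,'' and \thm{charact_set_size} is imported verbatim from the cited reference (Theorem~6 of the Zamarashkin et al.\ paper), so you have in effect supplied the missing argument. Your route is the classical one for such reference-set results and it is complete. The cardinality bound is correctly reduced, via the auxiliary level $\rho$ strictly between $\rho^{**}$ and $\rho^{*}$, to the statement that every $(r+1)$-subfamily of the slabs $C_i(\rho)$ has a common point, and the finite Helly theorem in $\mathbb{R}^{r}$ (applicable because $n>r$, and valid for unbounded convex sets when the family is finite) yields the contradiction; the subsequent extraction of a cardinality-minimal subset $J'\subseteq S^{*}$ with $\mu(J')=\mu(J)$ does produce a characteristic set in the sense of \dfn{charact_set}, since monotonicity of $\mu$ gives $\mu(J'')\le\mu(J)$ for $J''\subsetneq J'$ and minimality upgrades this to a strict inequality. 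The second assertion is also sound: the Chebyshev property forces $V(J')$ to have full row rank whenever $|J'|\le r$, hence $V(J')u=a(J')$ is solvable and $\mu(J')=0$, contradicting $\mu(J')=\mu(J)>0$ (which holds because $\im V$ is a closed subspace and $a\notin\im V$). The only caveat concerns your closing LP-duality sketch: as stated it still needs complementary slackness to tie the support of a basic optimal dual solution to a subset of indices on which the restricted optimum is unchanged, so it is an outline rather than a second proof; the Helly argument you actually carry out has no such gap.
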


The next theorem provides a useful criterion for a vector to be the solution to the problem \eq{uniform_approx_problem} (see \cite[Theorem 10]{zamarashkin2022best}).
\begin{theorem}
\label{theorem:remez_opt_crit}
Let $V \in \mathbb{R}^{n \times r}$ and $a \in \mathbb{R}^n$. Let $\hat{u} \in \mathbb{R}^r$ and let us denote $w = a - V\hat{u}$ and
\begin{equation*}
	J = \{j \in \{1, 2, \dots, n\} : |w_j| = \|w\|_\infty \}.
\end{equation*}
Then $\hat{u}$ is the solution to the problem \eq{uniform_approx_problem} if and only if there is a non-zero vector $\delta \in \mathbb{R}^{|J|}$ with non-negative components such that
\begin{equation*}
    V(J)^T \diag{(\sign w(J))} \delta = 0.
\end{equation*}
\end{theorem}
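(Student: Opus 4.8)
The plan is to read the claimed equality as the first-order optimality condition for the convex, piecewise-affine objective $g(u)=\|a-Vu\|_\infty$ of \eq{uniform_approx_problem} at $u=\hat u$, and to establish the two implications separately. The ``if'' direction is a short perturbation estimate; the ``only if'' direction, which I expect to be the main obstacle, needs a theorem of the alternative to extract the nonnegative multiplier $\delta$ from the bare fact that $\hat u$ is unbeatable.

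For \emph{sufficiency}, write $\rho=\|w\|_\infty$ and suppose $\delta\ge 0$, $\delta\neq 0$, satisfy $V(J)^T\diag(\sign w(J))\delta=0$. If some $u'$ had $\|a-Vu'\|_\infty<\rho$, set $w'=a-Vu'$, so that $V(\hat u-u')=w'-w$. For each $j\in J$, since $|w_j|=\rho$ while $(\sign w_j)w'_j\le\|w'\|_\infty<\rho$, we obtain $(\sign w_j)(w'-w)_j=(\sign w_j)w'_j-|w_j|<0$. Left-multiplying the identity $V(J)(\hat u-u')=(w'-w)(J)$ by $\delta^T\diag(\sign w(J))$ and invoking the hypothesis gives $0=\sum_{j\in J}\delta_j(\sign w_j)(w'-w)_j$, whose right-hand side is strictly negative (each term is $\le 0$ and at least one is $<0$)---a contradiction, so no improving $u'$ exists and $\hat u$ is optimal.

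For \emph{necessity}, I argue by contraposition. Put $M=\diag(\sign w(J))\,V(J)$, so that the stated equation is $M^T\delta=0$; if it has no solution with $\delta\ge 0$, $\delta\neq 0$, then Gordan's theorem produces $d\in\mathbb{R}^r$ with $Md<0$ entrywise, i.e. $(\sign w_j)(Vd)_j<0$ for all $j\in J$, and after replacing $d$ by $-d$ we may assume $(\sign w_j)(Vd)_j>0$ on $J$. Moving to $\hat u+td$ with small $t>0$: for $j\in J$ the $j$-th residual $(w-tVd)_j$ keeps the sign of $w_j$ (since $|w_j|=\rho>0$), hence $|(w-tVd)_j|=\rho-t(\sign w_j)(Vd)_j<\rho$; and for the finitely many $j\notin J$ we have $|(w-tVd)_j|\le|w_j|+t|(Vd)_j|<\rho$ once $t$ is small. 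Therefore $\|a-V(\hat u+td)\|_\infty<\rho$ for all sufficiently small $t>0$, so $\hat u$ is not a minimizer.

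The crux is the separation step in the necessity part: converting ``$\hat u$ cannot be improved'' into the dual certificate $\delta$. Gordan's theorem is the most economical device; equivalently one can use Farkas' lemma, separate the origin from the convex cone generated by the columns of $V(J)^T\diag(\sign w(J))$, or---perhaps most transparently---compute $\partial g(\hat u)=-V^T\,\partial(\|\cdot\|_\infty)(w)$ by the subdifferential chain rule, recall that $\partial(\|\cdot\|_\infty)(w)$ consists precisely of the vectors supported on $J$ of the form $\diag(\sign w(J))\delta$ with $\delta\ge 0$ and $\sum_{j\in J}\delta_j=1$, and note that $0\in\partial g(\hat u)$ is exactly the claimed identity up to rescaling $\delta$. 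Finally, the degenerate case $w=0$---where $\hat u$ is trivially optimal---is covered once one reads $\sign 0=0$, which makes the right-hand condition vacuously satisfiable.
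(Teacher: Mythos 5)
The paper does not prove this statement at all: it appears in the Preliminaries, which the authors explicitly present ``without proofs,'' with a pointer to Theorem~10 of the cited reference \cite{zamarashkin2022best}. So there is no in-paper argument to compare against; judged on its own, your proof is correct and self-contained. The sufficiency direction is clean: from $V(J)(\hat u-u')=(w'-w)(J)$ and the strict inequality $(\sign w_j)(w'-w)_j<0$ on $J$, pairing with the nonnegative certificate $\delta$ yields the desired contradiction, and the degenerate case $w=0$ is correctly disposed of with the convention $\sign 0=0$. The necessity direction via Gordan's theorem is the standard and, as you say, most economical route: the failure of $M^T\delta=0$, $\delta\ge 0$, $\delta\neq 0$ with $M=\diag(\sign w(J))\,V(J)$ produces a strict descent direction $d$, and the finiteness of the index set lets you choose a single $t>0$ that strictly decreases every residual component, both on and off $J$. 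Your closing remark that the whole statement is just $0\in\partial\|a-V\cdot\|_\infty(\hat u)$ together with the known form of the subdifferential of the $\ell_\infty$ norm is the right way to see why the result holds with no Chebyshev or dimension assumptions on $V$, matching the generality of the statement as given. No gaps.
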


Due to \thm{remez_opt_crit} the case $n = r+1$ is particularly important.
The following theorem provides a direct formula for this problem (see \cite{dzyadyk1974uniform, dzyadyk1977book}).
\begin{theorem}
\label{theorem:dzyadyk_sol}
    Let $V \in \mathbb{R}^{(r+1) \times r}$ be a Chebyshev matrix and $a \in \mathbb{R}^{r+1}$ does not belong to the image of $V$. Let us denote $\hat{u}^{(j)} = (V^{\setminus j})^{-1} a_{\setminus j}$. Then
    \begin{enumerate}[label=(\roman*)]
        \item\label{dzyadyk_sol_i} It holds
        \begin{equation*}
            (\hat{u}^{(j)})^T v^j - a_j = \dfrac{(-1)^{j+1}}{D_j(V)} \sum\limits_{k=1}^{r+1} (-1)^{k} a_{k} D_{k}(V),
        \end{equation*}
        where $v^j$ is $j$--th row of the matrix $V$.
        \item\label{dzyadyk_sol_ii} The vector
        \begin{equation*}
            \hat{u} = \sum\limits_{j=1}^{r+1} \dfrac{|D_j(V)|}{\sum\limits_{k=1}^{r+1} |D_k(V)|} \hat{u}^{(j)}
        \end{equation*}
        is the solution to the problem \eq{uniform_approx_problem} and
        \begin{equation*}
            \min\limits_{u \in \mathbb{R}^{r}} \left\| a - V u \right\|_{\infty} = \dfrac{\left| \sum\limits_{j=1}^{r+1} (-1)^j D_j(V) a_j \right|}{\sum\limits_{j=1}^{r+1} |D_j(V)|}.
        \end{equation*}
    \end{enumerate}
\end{theorem}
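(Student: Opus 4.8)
The plan is to reduce everything to one structural fact about a Chebyshev matrix $V\in\mathbb{R}^{(r+1)\times r}$: since $V$ contains an invertible $r\times r$ submatrix it has rank $r$, so $(\im V)^\perp$ is one-dimensional, and I claim it is spanned by the nonzero vector $c\in\mathbb{R}^{r+1}$ with $c_k=(-1)^kD_k(V)$. To verify $V^Tc=0$, fix any column $v$ of $V$, append it to $V$ to form an $(r+1)\times(r+1)$ matrix with two equal columns, and expand its vanishing determinant along the appended column: the cofactor of $v_k$ is $(-1)^{k+r+1}D_k(V)$, so the expansion reads $(-1)^{r+1}\sum_{k=1}^{r+1}(-1)^kv_kD_k(V)=0$, that is, $\sum_k(-1)^kv_kD_k(V)=0$. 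Because $V$ is Chebyshev every $D_k(V)\ne0$, hence $c\ne0$ and it indeed generates $(\im V)^\perp$; consequently $\langle c,a\rangle=\sum_{k=1}^{r+1}(-1)^kD_k(V)a_k$ vanishes if and only if $a\in\im V$, so under the hypothesis $a\notin\im V$ this scalar is nonzero.

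For part~\ref{dzyadyk_sol_i}, I would work with the residual $w^{(j)}=a-V\hat u^{(j)}$. Since $\hat u^{(j)}=(V^{\setminus j})^{-1}a_{\setminus j}$ interpolates $a$ in every row but the $j$-th, all entries of $w^{(j)}$ except the $j$-th vanish, while $w^{(j)}_j=a_j-(\hat u^{(j)})^Tv^j$. Pairing $w^{(j)}$ with $c$ in two ways gives $\langle c,w^{(j)}\rangle=c_jw^{(j)}_j=(-1)^jD_j(V)w^{(j)}_j$ on one hand and $\langle c,w^{(j)}\rangle=\langle c,a\rangle$ on the other (because $c\perp\im V$); solving for $w^{(j)}_j$ and using $(\hat u^{(j)})^Tv^j-a_j=-w^{(j)}_j$, the signs rearrange into the stated formula.

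For part~\ref{dzyadyk_sol_ii}, put $\lambda_j=|D_j(V)|/\sum_k|D_k(V)|>0$, so that $\sum_j\lambda_j=1$, and let $\hat u=\sum_j\lambda_j\hat u^{(j)}$. As the weights sum to one, $w:=a-V\hat u=\sum_j\lambda_jw^{(j)}$, hence $w_k=\lambda_kw^{(k)}_k$; substituting the formula of part~\ref{dzyadyk_sol_i} shows $|w_k|=\bigl|\sum_j(-1)^jD_j(V)a_j\bigr|/\sum_k|D_k(V)|$ for \emph{every} $k$, so all residual entries have equal modulus and $\|w\|_\infty$ equals this common value, which is the asserted minimum --- provided $\hat u$ is optimal. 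Optimality I would extract from \thm{remez_opt_crit}: the set $J$ there is all of $\{1,\dots,r+1\}$ since every $|w_k|$ is the maximum, and a short sign count gives $\sign w_k=(-1)^k\sign D_k(V)\cdot s$ with $s=\sign\bigl(\sum_j(-1)^jD_j(V)a_j\bigr)\in\{\pm1\}$ (well defined precisely because $a\notin\im V$); then $\delta_k:=s\,c_k\sign w_k=|D_k(V)|>0$ satisfies $\diag(\sign w(J))\delta=s\,c$, whence $V(J)^T\diag(\sign w(J))\delta=s\,V^Tc=0$, and $\delta$ is the nonnegative, nonzero certificate required by the theorem.

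The one genuinely load-bearing step is the first: confirming that $c_k=(-1)^kD_k(V)$ is a nonzero spanning vector of $(\im V)^\perp$ --- this is exactly where the Chebyshev hypothesis (all $D_k\ne0$) is used --- and that $\langle c,a\rangle\ne0$, where $a\notin\im V$ enters and which is what makes the sign $s$ and the strict positivity of the optimal value meaningful. Everything after that is routine manipulation of Laplace and Cramer expansions combined with the optimality criterion \thm{remez_opt_crit}.
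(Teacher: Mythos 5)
The paper does not actually prove \thm{dzyadyk_sol}: it is quoted as a known result of Dzyadyk with a citation, so there is no in-paper argument to compare yours against. That said, your proof is correct and self-contained modulo \thm{remez_opt_crit} (which is itself an imported general criterion, so no circularity arises). Every step checks out: the Laplace-expansion verification that $c_k=(-1)^kD_k(V)$ spans $\ker V^T$ is sound and is exactly where the Chebyshev hypothesis enters (it is the same computation the paper records as \lem{orthog_complement} and as \eq{laplace_exp} inside the proof of \lem{residual_alternating_signs}); the two-way pairing of $c$ with the one-point residual $w^{(j)}$ gives part~(i) cleanly; the convex-combination identity $w_k=\lambda_k w^{(k)}_k$ yields equioscillation of the residual, and the certificate $\delta_k=|D_k(V)|$ with $\diag(\sign w)\delta=s\,c$ is precisely the witness the paper itself constructs in the converse direction of \lem{residual_alternating_signs}. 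In effect you have reassembled the paper's own downstream machinery (\lem{orthog_complement}, \lem{orthog_solution_formula}, the alternation lemma) into a proof of the black-boxed theorem they all lean on, which is a legitimate and arguably more transparent organization; the only caveat is that in the paper's logical ordering \lem{residual_alternating_signs} is \emph{derived from} \thm{dzyadyk_sol}, so if one wanted to splice your argument into the paper one should make sure the optimality certificate is credited to \thm{remez_opt_crit} directly (as you do) rather than to that lemma.
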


We will also require the following simple result on the properties of orthogonal matrices.
\begin{lemma}
\label{lemma:orthog_complement}
    Let $\hat{Q} \in \mathbb{R}^{n \times n}$ be an orthogonal matrix of the form $\hat{Q} = \begin{bmatrix}
        Q & q'
    \end{bmatrix}$, where $Q \in \mathbb{R}^{n \times (n-1)}$ and $q' \in \mathbb{R}^n$. Then
    \begin{equation*}
        q' = (-1)^n \sign\det\hat{Q} \begin{bmatrix}
            (-1) D_1(Q) & (-1)^2 D_2(Q) & \dots & (-1)^n D_n(Q)
        \end{bmatrix}^T.
    \end{equation*}
\begin{proof}
    Let us consider the system $\hat{Q} y^{(k)} = e_k$, where $e_k$ is the standard basis vector. Then $y^{(k)}_n = (\hat{Q}^T e_k)_n = q'_k$. However, from Cramer's rule
    \begin{equation*}
        y^{(k)}_n = \dfrac{\det \hat{Q}^{(k)}}{\det \hat{Q}},
    \end{equation*}
    where $\hat{Q}^{(k)}$ is the matrix formed by replacing the last column of $\hat{Q}$ by the vector $e_k$. Clearly, $\det \hat{Q}^{(k)} = (-1)^{n + k} D_j(Q)$.
\end{proof}
\end{lemma}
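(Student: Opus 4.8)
The plan is to compute $q'$ entrywise. Since $\hat{Q}$ is orthogonal, $\hat{Q}^{-1}=\hat{Q}^{T}$, so for each $k\in\{1,\dots,n\}$ the system $\hat{Q}y^{(k)}=e_k$ has the unique solution $y^{(k)}=\hat{Q}^{T}e_k$, that is, $y^{(k)}$ is the $k$-th row of $\hat{Q}$ written as a column; its last component is therefore $\hat{Q}_{kn}=q'_k$. I would then apply Cramer's rule to the same system, $y^{(k)}_n=\det\hat{Q}^{(k)}/\det\hat{Q}$, where $\hat{Q}^{(k)}$ is obtained from $\hat{Q}$ by replacing its last column with $e_k$.

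The second step is to evaluate $\det\hat{Q}^{(k)}$ by cofactor expansion along that last column. Since the column is $e_k$, only one term survives and $\det\hat{Q}^{(k)}=(-1)^{k+n}$ times the minor obtained by deleting row $k$ and column $n$ from $\hat{Q}$; deleting column $n$ restores the matrix $Q$, and deleting its $k$-th row gives $Q^{\setminus k}$, so this minor is exactly $D_k(Q)=\det Q^{\setminus k}$. Hence $q'_k=(-1)^{k+n}D_k(Q)/\det\hat{Q}$.

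To finish, I would use that $\det\hat{Q}=\pm 1$ for an orthogonal matrix, so $1/\det\hat{Q}=\det\hat{Q}=\sign\det\hat{Q}$; factoring out $(-1)^n$ then gives $q'_k=(-1)^n\,\sign\det\hat{Q}\,(-1)^{k}D_k(Q)$, which is the asserted formula read off componentwise.

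There is no serious obstacle here beyond keeping the cofactor sign $(-1)^{k+n}$ consistent with the indexing convention for the $D_j$'s. If one prefers to sidestep Cramer's rule, a robust alternative is to observe directly that the vector $w$ with $w_k=(-1)^kD_k(Q)$ is orthogonal to every column of $Q$: for the $\ell$-th column this inner product equals, up to the global sign $(-1)^n$, the determinant of the $n\times n$ matrix $[\,Q\mid(\ell\text{-th column of }Q)\,]$, which vanishes because of the repeated column. Since $Q$ has rank $n-1$, the orthogonal complement of its column space is spanned by $w$, so $q'=cw$ for a scalar $c$; comparing $\|q'\|=1$ with the last-column cofactor expansion $\det\hat{Q}=(-1)^n/c$ then forces $c=(-1)^n\sign\det\hat{Q}$, recovering the same formula.
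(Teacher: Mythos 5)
Your proposal is correct and follows essentially the same route as the paper: identify $y^{(k)}_n=q'_k$ from $\hat{Q}y^{(k)}=e_k$ via orthogonality, then apply Cramer's rule and expand $\det\hat{Q}^{(k)}$ along its last column. You spell out the final normalization $1/\det\hat{Q}=\sign\det\hat{Q}$ more explicitly than the paper does, but the argument is the same.
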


The next lemma provides a convenient way of computing the best uniform approximation error, that we will need in our accelerated algorithm.
\begin{lemma}
\label{lemma:orthog_solution_formula}
    Let $V \in \mathbb{R}^{(r+1) \times r}$ be a Chebyshev matrix and $a \in \mathbb{R}^n$. Let $q \in \mathbb{R}^{r+1}$ be a non-zero vector such that $V^T q = 0$. Then
    \begin{equation}
    \label{eq:min_value}
        \min\limits_{u \in \mathbb{R}^{r}} \left\| a - V u \right\|_{\infty} = \dfrac{|q^T a|}{\|q\|_1}.
    \end{equation}
\begin{proof}
    Follows directly from \thm{dzyadyk_sol}~\ref{dzyadyk_sol_ii} and \lem{orthog_complement}.
\end{proof}
\end{lemma}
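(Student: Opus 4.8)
The plan is to reduce the statement directly to \thm{dzyadyk_sol}~\ref{dzyadyk_sol_ii}, which already provides
\[
\min\limits_{u \in \mathbb{R}^{r}} \left\| a - V u \right\|_{\infty} = \dfrac{\left| \sum_{j=1}^{r+1} (-1)^j D_j(V) a_j \right|}{\sum_{j=1}^{r+1} |D_j(V)|}
\]
whenever $a \notin \im V$. Hence it is enough to identify the null vector $q$ with the vector of signed minors: I would show that the vector $\tilde q \in \mathbb{R}^{r+1}$ defined by $\tilde q_j = (-1)^j D_j(V)$ spans $\ker V^T$, and then exploit that $|q^T a|$ and $\|q\|_1$ are both homogeneous of degree one in $q$, so the quotient in \eq{min_value} is unchanged if $q$ is replaced by $\tilde q$.

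To show $\tilde q$ spans $\ker V^T$, first note that since $V$ is Chebyshev it has rank $r$, so $\ker V^T$ is one-dimensional; thus it suffices to check $\tilde q \in \ker V^T$ and $\tilde q \ne 0$. For the membership, fix a column index $k$ and append the $k$-th column of $V$ to $V$ as a new last column; expanding the determinant of the resulting $(r+1)\times(r+1)$ matrix along that last column gives, up to the global sign $(-1)^{r+1}$, exactly $\sum_{j=1}^{r+1} (-1)^j D_j(V)\, V_{jk} = (V^T \tilde q)_k$. But that matrix has two equal columns, so its determinant vanishes; since $k$ was arbitrary, $V^T \tilde q = 0$. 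Finally $\tilde q \ne 0$ because every $D_j(V) \ne 0$ by the Chebyshev property. Alternatively, one can invoke \lem{orthog_complement} with $\hat Q = \begin{bmatrix} Q & q' \end{bmatrix}$ an orthogonal completion of an orthonormal basis $Q$ of $\im V$: writing $V = QR$ with $R$ invertible gives $D_j(V) = \det R \cdot D_j(Q)$, and \lem{orthog_complement} then exhibits $q'$ as a scalar multiple of $\tilde q$.

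Putting this together, any $q$ with $V^T q = 0$ equals $c\,\tilde q$ for some $c \ne 0$, so
\[
\dfrac{|q^T a|}{\|q\|_1} = \dfrac{|\tilde q^T a|}{\|\tilde q\|_1} = \dfrac{\left| \sum_{j=1}^{r+1} (-1)^j D_j(V) a_j \right|}{\sum_{j=1}^{r+1} |D_j(V)|} = \min\limits_{u \in \mathbb{R}^{r}} \left\| a - V u \right\|_{\infty},
\]
the last equality being \thm{dzyadyk_sol}~\ref{dzyadyk_sol_ii}. If instead $a \in \im V$, both sides of \eq{min_value} vanish — the left-hand side because $q \perp \im V$ — so the identity is trivial in that case as well. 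I do not expect a genuine obstacle: the only delicate points are the sign bookkeeping in the cofactor expansion and the (routine) observation that the Chebyshev hypothesis forces $\ker V^T$ to be exactly one-dimensional, which is precisely what makes $q$ well defined up to a scalar.
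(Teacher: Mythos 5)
Your proposal is correct and follows essentially the same route as the paper: the paper's proof is precisely the one-line combination of \thm{dzyadyk_sol}~\ref{dzyadyk_sol_ii} with \lem{orthog_complement}, using that $\ker V^T$ is one-dimensional for a Chebyshev $V$ so the scale-invariant ratio $|q^Ta|/\|q\|_1$ can be evaluated on the vector of signed minors. Your additional details (the cofactor-expansion verification that $\tilde q\in\ker V^T$ and the trivial case $a\in\im V$) are accurate fillings-in of what the paper leaves implicit.
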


\section{Equioscillation theorem}
\label{sec:equiosc}
One of the seminal results on the uniform approximation of continuous functions using polynomials is the Chebyshev equioscillation theorem (see e.g. \cite{dzyadyk1977book}).
\begin{theorem}
\label{theorem:classical_equioscillation_theorem}
Let $f$ be a continuous function from $[ a , b ]$ to $\mathbb{R}$. Among all the polynomials of degree not greater than $d$, the polynomial $g$ minimizes the uniform norm of the difference $\|f - g\|_C$ if and only if there are $d + 2$ points $a \le x_0 < x_1 < \dots < x_{d+1} \le b$ such that
\begin{equation*}
    f(x_j) - g(x_j) = \sigma (-1)^j \|f - g\|_C,
\end{equation*}
where $\sigma \in \{-1, 1\}$.
\end{theorem}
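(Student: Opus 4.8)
The plan is to establish the two implications of the characterization separately, following the classical pattern for Chebyshev-type problems. Write $E=\|f-g\|_C$ and $r=f-g$; if $E=0$ then $f=g$ is itself a polynomial of degree at most $d$ and both statements are trivial, so I assume $E>0$.

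\emph{Sufficiency.} Suppose the alternation points $a\le x_0<\dots<x_{d+1}\le b$ with $r(x_j)=\sigma(-1)^jE$ exist, and let $h$ be any polynomial of degree at most $d$ with $\|f-h\|_C\le E$. Then $g-h=(f-h)-(f-g)$ has degree at most $d$, and at each $x_j$, since $(f-h)(x_j)\in[-E,E]$ while $(f-g)(x_j)=\sigma(-1)^jE$, the number $(-1)^j\sigma\,(g-h)(x_j)$ is $\le 0$. Thus $g-h$ exhibits a weak sign alternation at $d+2$ ordered points, and the standard root count behind de~la~Vall\'ee--Poussin's lemma forces $g-h\equiv 0$. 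Hence every polynomial of degree at most $d$ lying as close to $f$ as $g$ must coincide with $g$; in particular $g$ is the (unique) minimizer.

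\emph{Necessity.} Suppose $g$ is a minimizer but the longest sequence of points at which $r$ attains the values $\pm E$ with alternating signs has length $m\le d+1$; I will build a polynomial $p$ of degree at most $m-1\le d$ such that $g+\lambda p$ is strictly closer to $f$ for all small $\lambda>0$, a contradiction. The extreme sets $M_{\pm}=\{x\in[a,b]:r(x)=\pm E\}$ are compact and disjoint, hence separated by a positive distance, so a fine enough partition of $[a,b]$ into closed subintervals has no subinterval meeting both $M_+$ and $M_-$ and no two opposite-type subintervals adjacent. The subintervals meeting $M:=M_+\cup M_-$ then group into maximal blocks $B_1,\dots,B_t$, each purely of one type, with consecutive blocks separated by subintervals disjoint from $M$. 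Selecting one extreme point from a maximal type-alternating subsequence of the blocks produces an alternating sequence of extreme points, so the number $c$ of indices $i$ with $B_i,B_{i+1}$ of opposite type satisfies $c+1\le m$. Insert a point $\eta_i$ in an $M$-free subinterval between $B_i$ and $B_{i+1}$ at each of these $c$ type changes, and let $p$ be $\pm\prod(\eta_i-x)$ with the overall sign chosen so that $\sign p=\sign r$ on $B_1$; then $\sign p=\sign r$ on every block, $\deg p=c\le m-1\le d$, and $rp>0$ on the compact set $M$ (each $\eta_i\notin M$). A routine compactness estimate --- controlling $f-(g+\lambda p)$ separately on a neighbourhood of $M$, where $r$ and $p$ share a sign, and on its complement, where $|r|\le E-\varepsilon$ --- then gives $\|f-(g+\lambda p)\|_C<E$ for all sufficiently small $\lambda>0$, contradicting optimality. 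Therefore $m\ge d+2$.

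The delicate point is the necessity direction: arranging the partition so that blocks of opposite type are genuinely insulated by $M$-free subintervals (which is exactly what allows $p$ to copy the sign of $r$ on all of $M$), together with the bookkeeping bounding $\deg p$ by the alternation length; the perturbation estimate itself is then routine. An alternative route for necessity is to pass to finite point sets, using that the best uniform approximation over $[a,b]$ equals the supremum of the best approximations over finite subsets and applying \thm{remez_opt_crit} to the Vandermonde (hence Chebyshev) submatrices that arise; the direct perturbation argument above is, however, self-contained.
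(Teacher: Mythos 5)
The paper does not actually prove this statement: it is quoted as the classical Chebyshev equioscillation theorem with a pointer to \cite{dzyadyk1977book}, and the closest the paper comes to it is the discrete analogue \thm{residual_alternating_signs}, after which the authors merely observe that the classical result is recovered because all maximal minors of a Vandermonde matrix share the same sign. Your argument is therefore necessarily a different route---it is the standard continuous-variable textbook proof, and it is essentially correct. For sufficiency you reduce to the fact that a polynomial of degree at most $d$ with weak sign alternation at $d+2$ ordered points vanishes identically; this is true, but the root count with weak inequalities requires the usual multiplicity bookkeeping (a zero at a node shared by two consecutive intervals must be counted twice or reassigned), which you invoke rather than carry out. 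For necessity, the separation of $M_+$ and $M_-$, the block decomposition, the bound $\deg p = c \le m-1 \le d$, and the sign-matching perturbation $g+\lambda p$ are all sound; the phrase ``on a neighbourhood of $M$, where $r$ and $p$ share a sign'' should be read as ``on a closed neighbourhood of $M$ on which $rp>0$,'' which exists by compactness, and the two-regime estimate then gives $\|f-(g+\lambda p)\|_C<E$ for small $\lambda>0$ as claimed. Your closing remark about passing to finite point sets and applying \thm{remez_opt_crit} to Vandermonde submatrices is in fact the route closest in spirit to the machinery the paper develops; what your direct proof buys instead is self-containedness and independence from the discrete theory.
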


Let us consider the problem of uniform approximation
\begin{equation}
\label{eq:uniform_approximation}
    \|Vu - a\|_\infty \to \min\limits_{u \in \mathbb{R}^r},
\end{equation}
where $V \in \mathbb{R}^{n \times r}$ and $a \in \mathbb{R}^n$. In this case, we can prove the result similar to \thm{classical_equioscillation_theorem}. In \cite{zamarashkin2022best} the authors provide such a statement as the necessary condition, but here we show that this is precisely the criterion. To begin with, we need the following
\begin{lemma}
\label{lemma:residual_alternating_signs}
Let us condiser the problem \eq{uniform_approximation} with a Chebyshev matrix $V \in \mathbb{R}^{(r+1) \times r}$ and a vector $a \in \mathbb{R}^{r+1}$ that does not belong to the image of $V$. Let $\hat{u} \in \mathbb{R}^{r}$. Let us denote the residual by $w = a - V\hat{u}$. Then $\hat{u}$ is the solution of \eq{uniform_approximation} if and only if $|w_1| = |w_2| = \dots = |w_{r+1}| = \|w\|_\infty$
and the signs in the sequence $w_1 D_1(V), w_2 D_2(V), \dots, w_{r+1} D_{r+1}(V)$
alternate.

\begin{proof}
Let $\hat{u} \in \mathbb{R}^r$ be the solution of \eq{uniform_approximation}. By \thm{dzyadyk_sol}~\ref{dzyadyk_sol_ii} we have
\begin{equation*}
    \hat{u} = \sum\limits_{j=1}^{r+1} \dfrac{|D_j(V)|}{\sum\limits_{k=1}^{r+1} |D_k(V)|} \hat{u}^{(j)},
\end{equation*}
where $\hat{u}^{(j)}$ is the solution to the problem $V^{\setminus j} u = a_{\setminus j}$.
Moreover, by \thm{dzyadyk_sol}~\ref{dzyadyk_sol_i} we have 
\begin{equation}
\label{eq:dot_product_exp}
    (\hat{u}^{(j)})^T v^j - a_j = \dfrac{(-1)^{j+1}}{D_j(V)} x, \quad \text{ where } \quad x = \sum\limits_{k=1}^{r+1} (-1)^{k} a_{k} D_{k}(V)
\end{equation}
and $v^j$ is $j$--th row of the matrix $V$. Let us define $\tilde{a} \in \mathbb{R}^{n+1}$ such that $\tilde{a}_j = (\hat{u}^{(j)})^T v^j$.
By the definition of $\hat{u}^{(j)}$, we have
\begin{equation*}
    V \hat{u}^{(j)} = \begin{bmatrix}
        a_1 &
        a_2 &
        \dots &
        a_{j-1} &
        \tilde{a}_j &
        a_{j+1} &
        \dots &
        a_{n+1}
    \end{bmatrix}^T.
\end{equation*}
Let us denote
\begin{equation*}
    D = \diag{(D_1(V), \dots, D_{r+1}(V))}\text{ and }\hat{D} = \diag{(|D_1(V)|, \dots, |D_{r+1}(V)|)}.
\end{equation*}
Then
\begin{equation*}
    V\hat{u} = \sum\limits_{j=1}^{r+1} \dfrac{|D_j(V)|}{\sum\limits_{k=1}^{r+1} |D_k(V)|} (a + (\tilde{a}_j - a_j)e_j) = a + \dfrac{\hat{D} (\tilde{a} - a)}{\sum\limits_{k=1}^{r+1} |D_k(V)|}.
\end{equation*}
Then we have
\begin{equation*}
    w = a - V\hat{u} = \hat{D} (a - \tilde{a}) / \left(\sum\limits_{k=1}^{r+1} |D_k(V)|\right).
\end{equation*}    
Note that by \eq{dot_product_exp} we have $a - \tilde{a} = x S D^{-1} e$, where 
\begin{equation*}
    S = \diag{((-1)^1, (-1)^2, \dots, (-1)^{r+1})}
\end{equation*}
and $e = \begin{bmatrix}
    1 & 1 & \dots & 1
\end{bmatrix}^T$. Thus,
\begin{equation*}
    w = \dfrac{x}{\sum\limits_{k=1}^{r+1} |D_k(V)|} \hat{D} S D^{-1} e,
\end{equation*}
whence
\begin{equation*}
    w_j D_j = \dfrac{x}{\sum\limits_{k=1}^{r+1} |D_k(V)|} (-1)^j |D_j|
\end{equation*}
and the signs in the sequence $w_1 D_1, w_2 D_2, \dots, w_{r+1} D_{r+1}$
alternate. Moreover, by \lem{rp1points} all absolute values of the elements of $w$ are equal.

Let us prove the reversal. Let $\hat{u} \in \mathbb{R}^r$ be a vector, such that for $w = a - V\hat{u}$ we have $|w_1| = |w_2| = \dots = |w_{r+1}| = \|w\|_\infty$
and the signs in the sequence
\begin{equation*}
    w_1 D_1(V), w_2 D_2(V), \dots, w_{n+1} D_{r+1}(V)
\end{equation*}
alternate. Let us show that $\hat{u}$ is the solution of \eq{uniform_approximation}. From the assumptions on $w$, we have that
\begin{equation}
\label{eq:wj_values}
	w_j = c(-1)^j \sign D_j(V).
\end{equation}
By \thm{remez_opt_crit}, we have that $\hat{u}$ is the solution of \eq{uniform_approximation} if and only if there is nonzero $\delta \in \mathbb{R}^{r+1}$ with non-negative components such that $V^T \diag{(\sign w)} \delta = 0$.
Substituting \eq{wj_values} we have
\begin{equation}
\label{eq:tratata}
    V^T S D \hat{D}^{-1} \delta = 0.
\end{equation}
Let us consider the matrix $\tilde{V}^{(k)} \in \mathbb{R}^{(r+1)\times(r+1)}$, which corresponds to the matrix $V$ with copied $k$--th column,
\begin{equation*}
	\tilde{V}^{(k)} = \begin{bmatrix}
	v_1 & v_2 & \dots & v_{k-1} & v_k & v_k & v_{k+1} & \dots & v_{r+1}
	\end{bmatrix}.
\end{equation*}
Clearly, $\tilde{V}_k$ is singular. Let us apply Laplace expansion to the $k$--th column of $\tilde{V}_k$. Then
\begin{equation}
\label{eq:laplace_exp}
	\sum\limits_{j=1}^{r+1} (-1)^{k+j} D_j(V) v_{jk} = 0.
\end{equation}
Then choosing $\delta_j = |D_j|$ we get in the left hand side of \eq{tratata}
\begin{equation*}
    V^T S D \hat{D}^{-1} \delta = V^T S D e,
\end{equation*}
which is equal to zero by \eq{laplace_exp}.
\end{proof}
\end{lemma}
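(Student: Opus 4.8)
The plan is to establish both directions from results already at hand: the explicit Dzyadyk formula $\thm{dzyadyk_sol}$ for the ``only if'' part, and the dual optimality criterion $\thm{remez_opt_crit}$ for the ``if'' part.

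For the forward implication, suppose $\hat u$ solves \eq{uniform_approximation}. By $\thm{dzyadyk_sol}$~\ref{dzyadyk_sol_ii}, $\hat u$ is the convex combination $\sum_{j=1}^{r+1}\lambda_j\hat u^{(j)}$ with weights $\lambda_j=|D_j(V)|/\sum_k|D_k(V)|$ and $\hat u^{(j)}=(V^{\setminus j})^{-1}a_{\setminus j}$. The observation that makes the residual transparent is that $V\hat u^{(j)}$ agrees with $a$ in every coordinate except the $j$-th, so $V\hat u^{(j)}=a+(\tilde a_j-a_j)e_j$ with $\tilde a_j:=(\hat u^{(j)})^T v^j$; averaging gives $w=a-V\hat u=\hat D(a-\tilde a)/\sum_k|D_k(V)|$, where $\hat D=\diag(|D_1(V)|,\dots,|D_{r+1}(V)|)$. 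Substituting the value of $a_j-\tilde a_j$ supplied by $\thm{dzyadyk_sol}$~\ref{dzyadyk_sol_i}, namely $(-1)^j x/D_j(V)$ with $x=\sum_k(-1)^k a_k D_k(V)$, yields $w_j D_j(V)=x(-1)^j|D_j(V)|/\sum_k|D_k(V)|$. Since $V$ is Chebyshev we have $D_j(V)\ne 0$ for all $j$, and since $a\notin\im V$ the error formula of $\thm{dzyadyk_sol}$~\ref{dzyadyk_sol_ii} is strictly positive, forcing $x\ne 0$. Hence $|w_j|$ is independent of $j$ (so every entry of $w$ equals $\|w\|_\infty$) and $\sign(w_j D_j(V))=\sign(x)\,(-1)^j$, which alternates.

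For the converse, assume $|w_1|=\dots=|w_{r+1}|=\|w\|_\infty$ and that the sequence $w_j D_j(V)$ alternates in sign; together these force $w_j=c(-1)^j\sign D_j(V)$ for some constant $c$. Because all $|w_j|$ equal $\|w\|_\infty$, the set $J$ of $\thm{remez_opt_crit}$ is all of $\{1,\dots,r+1\}$, so it suffices to produce a nonzero $\delta$ with non-negative entries satisfying $V^T\diag(\sign w)\delta=0$; writing $S=\diag((-1)^1,\dots,(-1)^{r+1})$, $D=\diag(D_j(V))$ and $\hat D=\diag(|D_j(V)|)$, this is $V^T S D\hat D^{-1}\delta=0$. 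Choosing $\delta_j=|D_j(V)|$ (nonzero since $V$ is Chebyshev) reduces the requirement to $V^T S D e=0$, whose $k$-th component is $\sum_{j=1}^{r+1}(-1)^j v_{jk}D_j(V)$. The heart of the matter is that this sum vanishes: form the $(r+1)\times(r+1)$ matrix $\tilde V^{(k)}$ obtained from $V$ by duplicating its $k$-th column, which is singular; expanding its determinant by Laplace along one copy of that column, and noting that deleting that column returns exactly $V$ so that the relevant minors are the $D_j(V)$, gives $0=\det\tilde V^{(k)}=\sum_j(-1)^{j+k}v_{jk}D_j(V)$, i.e.\ the claimed identity up to the global factor $(-1)^k$. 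Then $\thm{remez_opt_crit}$ certifies that $\hat u$ solves \eq{uniform_approximation}.

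The sign bookkeeping — carrying the $(-1)^j$ factors correctly between $\hat u^{(j)}$, the residual $w$, and the cofactor expansion — is routine but needs care, and one must not forget that $x\ne 0$ really uses $a\notin\im V$. The single genuinely non-mechanical step, which I expect to be the main obstacle, is recognizing the alternating sum $\sum_j(-1)^j v_{jk}D_j(V)$ as the vanishing Laplace expansion of the rank-deficient duplicated-column matrix $\tilde V^{(k)}$; the rest is substitution into $\thm{dzyadyk_sol}$ and $\thm{remez_opt_crit}$.
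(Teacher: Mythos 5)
Your proof is correct and follows essentially the same route as the paper's: the forward direction via the explicit Dzyadyk representation of $\hat u$ as a convex combination of the $\hat u^{(j)}$ and the resulting closed form for $w_j D_j(V)$, and the converse via Theorem~\ref{theorem:remez_opt_crit} with $\delta_j=|D_j(V)|$ certified by the vanishing Laplace expansion of the duplicated-column matrix. The only (harmless) divergence is that you read off the equality of the $|w_j|$ directly from the explicit formula using $x\ne 0$, whereas the paper invokes Lemma~\ref{lemma:rp1points} for that step.
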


Finally, we are ready to prove the equioscillation theorem for the problem \eq{uniform_approximation}.
\begin{theorem}
\label{theorem:residual_alternating_signs}
Let us consider the problem \eq{uniform_approximation} with a Chebyshev matrix $V \in \mathbb{R}^{n \times r}$ and a vector $a \in \mathbb{R}^{n}$ that does not belong to the image of $V$. Let $\hat{u} \in \mathbb{R}^{r}$. Let us denote the residual by $w = a - V\hat{u}$. Then $\hat{u}$ is the solution of \eq{uniform_approximation} if and only if there is a set of integers $1 \le j_1 < j_2 < \dots < j_{r+1} \le n$ such that
\begin{equation*}
	|w_{j_1}| = |w_{j_2}| = \dots = |w_{j_{r+1}}| = \|w\|_\infty
\end{equation*}
and the signs in the sequence
\begin{equation*}
    w_{j_1} \Delta_1, w_{j_2} \Delta_2, \dots, w_{j_{r+1}} \Delta_{r+1}
\end{equation*}
alternate, where
$\Delta_k = \det V((j_1, \dots, j_{k-1}, j_{k+1}, \dots, j_{r+1}))$.
\begin{proof}
Let $\hat{u}$ be the solution of \eq{uniform_approximation}. Then by \thm{charact_set_size} there is a characteristic set of $r+1$ elements $j_1 < j_2 < \dots < j_{r+1}$. Let us denote $J' = (j_1, j_2, \dots, j_{r+1})$. By definition of characteristic set, $\hat{u}$ is the solution to the problem
\begin{equation*}
\|a(J') - V(J') u\|_\infty \to \min\limits_{u \in \mathbb{C}^{r}},
\end{equation*}
therefore, the conditions of \lem{residual_alternating_signs} are met for $\hat{u}$, which implies the statement of the theorem.

Let a set $J' = (j_1, j_2, \dots, j_{r+1})$ satisfy the conditions of the theorem. Then by \lem{residual_alternating_signs} the vector $\hat{u}$ is the solution to the problem
\begin{equation*}
\|a(J') - V(J') u\|_\infty \to \min\limits_{u \in \mathbb{R}^{r}},
\end{equation*}
whence by \thm{remez_opt_crit} there is a non-zero $\delta \in \mathbb{R}^{r+1}$ with non-negative components such that
\begin{equation*}
	V(J')^T \diag{(\sign w(J'))} \delta = 0.
\end{equation*}
Let
\begin{equation*}
	\hat{J} = \{j \in \{1, 2, \dots, n\} : |w_j | = \|w\|_\infty \}.
\end{equation*}
Clearly, $J' \subset \hat{J}$. Let us denote by $E_{(k)}$ and $J'_{(k)}$ $k$--th smallest element of sets $\hat{J}$ and $J'$ respectively. Let us set $\hat{\delta} \in \mathbb{R}^{|\hat{J}|}$ such that
\begin{equation*}
    \hat{\delta}_j = \begin{cases}
        \delta_k, & \hat{J}_{(j)} \in J' \text{ and } J'_{(k)} = \hat{J}_{(j)} \\
        0, & \text{otherwise}.
    \end{cases}
\end{equation*}
Then
\begin{equation*}
	V(\hat{J})^T \diag{(\sign w(\hat{J}))} \hat{\delta} = V(J')^T \diag{(\sign w(J'))} \delta = 0,
\end{equation*}
whence by \thm{remez_opt_crit} $\hat{u}$ is the solution to the problem \eq{uniform_approximation}.
\end{proof}
\end{theorem}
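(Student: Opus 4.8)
The plan is to reduce the general case $n \ge r$ to the special case $n = r+1$ handled by \lem{residual_alternating_signs}, using \thm{charact_set_size} and \thm{remez_opt_crit} to bridge between them.

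For the forward direction, I would first invoke \thm{charact_set_size}: since $V$ is Chebyshev and $a$ does not belong to the image of $V$, every characteristic set has at least $r+1$ elements, while at least one characteristic set has at most $r+1$ elements; hence there is a characteristic set $J' = (j_1, \dots, j_{r+1})$ of exactly $r+1$ elements. The defining property of a characteristic set is $\mu(J) = \mu(J')$, so the restricted problem $\|a(J') - V(J')u\|_\infty \to \min$ has the same optimal value as the full problem, and $\hat u$ — being optimal for the full problem — is also optimal for the restricted problem (its residual on $J'$ cannot have norm smaller than $\mu(J)$, and it does not exceed $\|w\|_\infty = \mu(J)$ on $J'$, so it equals $\mu(J')$). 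Since $V(J') \in \mathbb{R}^{(r+1)\times r}$ is still Chebyshev (all its $r\times r$ submatrices are submatrices of $V$) and $a(J')$ is not in the image of $V(J')$ (otherwise $\mu(J') = 0 \neq \mu(J)$), \lem{residual_alternating_signs} applies to the restricted problem and gives exactly the alternating-sign condition for the sequence $w_{j_k}\Delta_k$ with $\Delta_k = D_k(V(J'))$, together with $|w_{j_k}| = \|w\|_\infty$ for all $k$.

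For the converse, suppose such an index set $J'$ exists. Then $\hat u$ restricted to $J'$ satisfies the hypotheses of \lem{residual_alternating_signs} for the $(r+1)$-point problem (the residual attains a common absolute value on all of $J'$ — which equals the max over $J'$, hence is the sup-norm of that subvector's residual — and has the alternating sign pattern), so by that lemma $\hat u$ solves $\|a(J') - V(J')u\|_\infty \to \min$. Now apply \thm{remez_opt_crit} to the restricted problem: there is a nonzero $\delta \ge 0$ with $V(J')^T \diag(\sign w(J'))\delta = 0$. The key step is to lift this certificate to the full index set: define $\hat\delta$ on $\hat J = \{j : |w_j| = \|w\|_\infty\}$ by placing the components of $\delta$ at the positions corresponding to $J'$ (note $J' \subset \hat J$ by the common-value hypothesis) and zero elsewhere. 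The zero padding does not change the matrix-vector product, so $V(\hat J)^T \diag(\sign w(\hat J))\hat\delta = 0$ with $\hat\delta$ nonzero and nonnegative; by \thm{remez_opt_crit} applied to the full problem, $\hat u$ is optimal.

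I expect the main obstacle to be the bookkeeping in the converse direction — carefully matching indices between $J'$ and $\hat J$ so that the padded vector $\hat\delta$ genuinely produces the same product, and confirming that the reduction in the forward direction is legitimate (in particular that optimality for the full problem transfers to the restricted problem on a characteristic set, and that the restricted data still satisfies the Chebyshev and "not in the image" hypotheses needed to invoke \lem{residual_alternating_signs}). Everything else is a direct application of the preliminary results.
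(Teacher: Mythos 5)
Your proposal is correct and follows essentially the same route as the paper's proof: the forward direction reduces to a characteristic set of exactly $r+1$ elements via \thm{charact_set_size} and invokes \lem{residual_alternating_signs}, and the converse applies \lem{residual_alternating_signs} to the restricted problem and lifts the certificate from \thm{remez_opt_crit} to the full index set by zero-padding. The extra checks you flag (that $V(J')$ remains Chebyshev, that $a(J')$ is not in its image, and that optimality transfers to the characteristic set) are exactly the details the paper leaves implicit, and your treatment of them is sound.
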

There is a direct connection between \thm{classical_equioscillation_theorem} and \thm{residual_alternating_signs}. Let us consider the system of points $x_0, \dots, x_{d+1}$ on a segment $[a, b] \subset \mathbb{R}$, namely,
\begin{equation*}
    a \le x_0 < x_1 < \dots < x_{d + 1} \le b
\end{equation*}
and the Vandermonde matrix $W(x_0, \dots, x_{d+1}) \in \mathbb{R}^{(d+2) \times (d+1)}$ constructed on this points. It is known that
\begin{equation*}
	\det W(y_1, \dots, y_k) = \prod\limits_{i < j} (y_i - y_j)
\end{equation*}
for a square Vandermonde matrix. Therefore, the determinants for all submatrices $W(x_0, \dots, x_{d+1})^{\setminus j}$ have the same sign, so \thm{residual_alternating_signs} reduces to \thm{classical_equioscillation_theorem}.

\section{Accelerated uniform approximation algorithm}
\label{sec:accelerated_alg}
Now we are ready to discuss an accelerated algorithm for solving the problem of the best uniform approximation
\begin{equation}
\label{eq:accelerated_basic_problem}
    \|Vu - a\|_\infty \to \min\limits_{u \in \mathbb{R}^r}.
\end{equation}
for a Chebyshev matrix $V \in \mathbb{R}^{n \times r}$ and a vector $a \in \mathbb{R}^n$. To the best of our knowledge, the first algorithm for solving \eq{accelerated_basic_problem} was suggested in \cite{zamarashkin2022best}, where the authors propose an iterative procedure for constructing the characteristic set (or, equivalently, to the set that provides equioscillation properties as in \thm{residual_alternating_signs}). We remind that the size of the characteristic set is always $r+1$ for the problem with the Chebyshev matrix, so when the characteristic set is found, we only need to solve the problem of size $(r+1)\times r$, which can be done by explicit formulas (see \thm{dzyadyk_sol}). The complexity of the method proposed in \cite{zamarashkin2022best} is $O(I (nr + r^4))$, where $I$ is the number of iterations. In this paper, we propose the algorithm with the complexity $O(r^3 + Inr)$, which is equivalent to one described in \cite{zamarashkin2022best} in precise arithmetic. The core idea is to support the QR decomposition for the current submatrix of size $(r+1)\times r$. This idea draws the inspiration from the similar approach for maximum volume algorithm \cite{osinsky2018rectangular}.

\subsection{On the problem of size $(r+1) \times r$}
\label{sec:small_problem_optimized}

If the QR decomposition of a matrix is known, then the solution to the least squares problem with this matrix can be found in $O(r^2)$ operation, where $r$ is the size of the matrix. Similarly, solution to the best uniform approximation problem can be found in $O(r^2)$ for a matrix of size $(r+1) \times r$ if the QR decomposition of the matrix is known. Let $\hat{V} \in \mathbb{R}^{(r+1)\times r}$ be a Chebyshev matrix and $\hat{a} \in \mathbb{R}^{r+1}$ be the right-hand side for the best uniform approximation problem. Let $\hat{V} = \hat{Q} \hat{R}$, where $\hat{Q} \in \mathbb{R}^{(r+1) \times r}$ has orthonormal columns and $\hat{R}$ is non-singular upper triangular matrix. Let also $\hat{q}'$ denote the vector such that $\begin{bmatrix}
    \hat{Q} & \hat{q}'
\end{bmatrix} \in \mathbb{R}^{(r+1)\times (r+1)}$ is orthogonal.

To solve the problem \eq{accelerated_basic_problem}, we can use \thm{remez_opt_crit}, which provides the criterion for a vector to be the solution to the problem. By \lem{rp1points} and \thm{remez_opt_crit} a vector $\hat{u} \in \mathbb{R}^r$ is the solution to the best uniform approximation problem if and only if there is a non-zero vector $\delta \in \mathbb{R}^{r+1}$ with non-negative components such that
\begin{equation}
\label{eq:remez_result}
    \hat{V}^T \diag{(\sign{\hat{w}})} \delta = 0,
\end{equation}
where $\hat{w} = \hat{a} - \hat{V}\hat{u}$. Since $\hat{V}$ is Chebyshev, the dimension of $\ker \hat{V}^T$ is equal to 1 and the generating vector of the space $\ker \hat{V}^T$ is $\hat{q}'$ (note that $\hat{q}'$ has non-zero components by \lem{orthog_complement}). Then \eq{remez_result} can be satisfied only if $\delta_k = |\hat{q}'_k|$ and $\sign \hat{w}_k = c \sign {\hat{q}'_k}$ for $k=1,2, \dots, r+1$, where $c=\pm 1$ and does not depend on $k$. If $\hat{u}$ is the solution, then by \lem{rp1points} we have $|\hat{w}_k| = \|\hat{w}\|_\infty$ for $k = 1,2,\dots,r+1$. Therefore, $\hat{w}_j = \hat{c} \sign \hat{q}'_j$, where $\hat{w}$ is the residual for the optimal solution. Since $\hat{w}$ is the residual, the equation $\hat{V}\hat{u} = \hat{a} - \hat{w}$ should have a solution.
\begin{equation}
\label{eq:solution_for_residual_must_exists}
    \hat{a} - \hat{w} = \hat{Q}\hat{R}\hat{u} = \begin{bmatrix}
        \hat{Q} & \hat{q}'
    \end{bmatrix} \begin{bmatrix}
        \hat{R}\\
        0
    \end{bmatrix}\hat{u}.
\end{equation}
Then for \eq{solution_for_residual_must_exists} to have a solution we need $(\hat{a} - \hat{w})^T \hat{q}' = 0$, hence $\hat{c} (\sign \hat{q'})^T \hat{q}' = \hat{a}^T\hat{q}'$ and $\hat{c} = \dfrac{\hat{a}^T\hat{q}'}{\|\hat{q}'\|_1}$. Therefore, we can compute the residual for the optimal solution. Once the residual is computed, we can find the solution as $\hat{u} = \hat{R}^{-1}\hat{Q}^T(\hat{a} - \hat{w})$.

The final procedure is presented in \alg{small_problem_optimized}. Clearly, the complexity of the described method is $O(r^2)$.

\begin{algorithm}
\caption{Accelerated algorithm for the problem of size $(r + 1) \times r$.}
\label{alg:small_problem_optimized}
\begin{algorithmic}
\Require{Matrix with orthonormal columns $\hat{Q} \in \mathbb{R}^{(r+1) \times r}$, complement to the orthogonal matrix $\hat{q}' \in \mathbb{R}^{r+1}$, upper triangular matrix $\hat{R} \in \mathbb{R}^{r \times r}$, right-hand side vector $\hat{a} \in \mathbb{R}^{r+1}$.}
\Ensure{$\hat{u} \in \mathbb{R}^r$ solution to the problem $\|\hat{Q}\hat{R} u - a\|_\infty \to \min\limits_{u \in \mathbb{R}^r}$.}

\State{$\hat{c} = \hat{a}^T\hat{q}' / \|\hat{q}'\|_1$}
\State{$\hat{w} = \hat{c} \sign{\hat{q}'}$}
\State{$\hat{u} = \texttt{solve\_triangular}(\hat{R}, \hat{Q}^T (\hat{a} - \hat{w}))$}
\end{algorithmic}
\end{algorithm}


\subsection{Iterative update of the current set of indices}
\label{sec:iterative_update}

Let $V \in \mathbb{R}^{n \times r}$ be a Chebyshev matrix and $a \in \mathbb{R}^n$ be a vector. Let $\hat{J}$ be a set of row indices of the matrix $V$ and $|\hat{J}| = r + 1$. Let us denote $\hat{V} = V(\hat{J})$ and $\hat{a} = a(\hat{J})$. Using \alg{small_problem_optimized} we can find the solution to the problem with the matrix $\hat{V}$ and right-hand side $\hat{a}$. Let us denote the solution by $\hat{u}$ and $w = a - V\hat{u}$. From \thm{remez_opt_crit} it can be shown that if $\|w(\hat{J})\|_\infty = \|w\|_\infty$, then $\hat{u}$ is the solution to the best uniform approximation problem with the matrix $V$ and right-hand side $a$. Otherwise, let us denote by $\tilde{j}$ the position of the maximum absolute value element in $w$. It can be shown (see \cite[Theorem 12]{zamarashkin2022best}) that there is a set $\tilde{J}$, obtained from $\hat{J}$ by the replacement of one element with $\tilde{j}$ such that
\begin{equation*}
    \min\limits_{u \in \mathbb{R}^r}\| V(\hat{J})u - a(\hat{J}) \|_\infty < \min\limits_{u \in \mathbb{R}^r}\| V(\tilde{J})u - a(\tilde{J}) \|_\infty.
\end{equation*}
Since the value of the approximation error cannot increase indefinitely, in a finite number of steps the iterative procedure converges to the solution of the best uniform approximation problem.

In this section, we propose an efficient procedure for the update of the set of indices $\hat{J}$.
We need to select some row of $\hat{V}$ to replace. The choice depends on the value of the minimum \eq{min_value} in \lem{orthog_solution_formula}.
Let the matrix $\tilde{V} \in \mathbb{R}^{(r+1)\times r}$ be obtained from the matrix $\hat{V}$ by replacing the $k$--th row with a vector $h$ and the vector $\tilde{a} \in \mathbb{R}^{r+1}$ be obtained from the vector $\hat{a}$ by replacing the $k$--th element with $\xi \in \mathbb{R}$.
The computation by \eq{min_value} requires the knowledge of a vector $\tilde{q}'$, such that $\tilde{V}^T \tilde{q}' = 0$. Thus, we need to be able to quickly compute all these $r+1$ vectors for $k$ from $1$ to $r+1$, before we update the QR decomposition.
By definition $\tilde{V} = \hat{V} + e_k(h - \hat{V}^Te_k)^T$. Let us construct a non-zero vector $\tilde{q}' \in \mathbb{R}^{r+1}$ such that $\tilde{V}^T\tilde{q}' = 0$. Let $\tilde{V} = \tilde{Q} \tilde{R}$, where $\tilde{Q} \in \mathbb{R}^{(r+1) \times r}$ has orthonormal columns and $\tilde{R}$ is a non-singular upper triangular matrix. Then
\begin{equation*}
    \hat{Q} \hat{R} + e_k (h - \hat{V}^T e_k)^T = \tilde{Q} \tilde{R}.
\end{equation*}
Let us multiply the last equation by $\begin{bmatrix}
    \hat{Q} & \hat{q}'
\end{bmatrix}^T$ on the left and by $\hat{R}^{-1}$ on the right.
\begin{equation*}
    \begin{bmatrix}
        I\\
        0
    \end{bmatrix} + \begin{bmatrix}
    \hat{Q} & \hat{q}'
\end{bmatrix}^T e_k (h - \hat{V}^T e_k)^T \hat{R}^{-1} = \begin{bmatrix}
    \hat{Q} & \hat{q}'
\end{bmatrix}^T\tilde{Q} \tilde{R}\hat{R}^{-1}.
\end{equation*}
Let us denote $z = \hat{R}^{-T}(h - \hat{V}^Te_k) = \hat{R}^{-T}h - \hat{q}^k$, where $\hat{q}^k \in \mathbb{R}^r$ denotes the $k$--th row of the matrix $\hat{Q}$. Then
\begin{equation}
\label{eq:some_eq1}
    \begin{bmatrix}
        I\\
        0
    \end{bmatrix} + \begin{bmatrix}
    \hat{q}^k \\
    \hat{q}'_k
\end{bmatrix} z^T = \begin{bmatrix}
    \hat{Q} & \hat{q}'
\end{bmatrix}^T\tilde{Q} \tilde{R}\hat{R}^{-1}.
\end{equation}
Note that $\tilde{V}^T\tilde{q}' = 0$ is equivalent to
\begin{equation*}
    (\tilde{q}')^T \begin{bmatrix}
    \hat{Q} & \hat{q}'
\end{bmatrix} \begin{bmatrix}
    \hat{Q} & \hat{q}'
\end{bmatrix}^T \tilde{Q} \tilde{R} \hat{R}^{-1} = 0.
\end{equation*}
Substituting \eq{some_eq1} into the last equation we obtain
\begin{equation}
\label{eq:q_tilde_prime_formula}
    (\tilde{q}')^T \begin{bmatrix}
    \hat{Q} & \hat{q}'
\end{bmatrix} \left( \begin{bmatrix}
        I\\
        0
    \end{bmatrix} + \begin{bmatrix}
    \hat{q}^k \\
    \hat{q}'_k
\end{bmatrix} z^T \right) = 0.
\end{equation}
Let us denote $x = \hat{Q}^T \tilde{q}'$ and $\alpha = (\hat{q}')^T \tilde{q}'$. Then
\begin{equation}
\label{eq:some_eq2}
    \begin{bmatrix}
        x^T & \alpha
    \end{bmatrix} \left( \begin{bmatrix}
        I\\
        0
    \end{bmatrix} + \begin{bmatrix}
    \hat{q}^k \\
    \hat{q}'_k
\end{bmatrix} z^T \right) = 0,
\end{equation}
which is equivalent to
\begin{equation*}
    x + (x^T \hat{q}^k + \alpha \hat{q}'_k)z = 0,
\end{equation*}
whence $x = cz$, where $c \in \mathbb{R}$. Then $cz + (cz^T \hat{q}^k + \alpha \hat{q}'_k)z = 0$ and if $z \neq 0$, we have
\begin{equation*}
    c + cz^T \hat{q}^k + \alpha \hat{q}'_k = 0.
\end{equation*}
Therefore, $\alpha \hat{q}'_k = -c(1 + z^T \hat{q}^k)$. Then
\begin{equation*}
    \begin{bmatrix}
        \hat{q}'_k x \\
        \hat{q}'_k \alpha
    \end{bmatrix} = \begin{bmatrix}
        \hat{q}'_k c z \\
        -c(1 + z^T \hat{q}^k)
    \end{bmatrix} = c \begin{bmatrix}
        -\hat{q}'_k z \\
        1 + z^T \hat{q}^k
    \end{bmatrix}.
\end{equation*}
Since we look for \textit{any} $\tilde{q}' \neq 0$ such that $\tilde{V}^T\tilde{q}' = 0$, we do not care about the norm of $\tilde{q}'$. Then we can take
\begin{equation}
\label{eq:q_tilde_prime_expr}
    \tilde{q}' = \begin{bmatrix}
        \hat{Q} & \hat{q}'
    \end{bmatrix} \begin{bmatrix}
        -\hat{q}'_k z \\
        1 + z^T \hat{q}^k
    \end{bmatrix}.
\end{equation}
Note that $\hat{q}'_k \neq 0$ by \lem{orthog_complement}, so $\tilde{q}'$ is always non-zero. Substituting \eq{q_tilde_prime_expr} to \eq{q_tilde_prime_formula} gives identically zero, so \eq{q_tilde_prime_expr} is correct for any values of $z$. Once we have computed $\tilde{q}'$, due to \lem{orthog_solution_formula}, we can calculate
\begin{equation*}
    \min\limits_{u \in \mathbb{R}^r}\| V(\tilde{J})u - a(\tilde{J}) \|_\infty = \dfrac{|\tilde{a}^T \tilde{q}'|}{\|\tilde{q}'\|_1}.
\end{equation*}

Direct computation of \eq{q_tilde_prime_expr} requires $O(r^2)$ operation, however, the complexity can be reduced further if we perform some operations, common for all values of $k$, in advance. Let us denote $g = \hat{R}^{-T}h$ and $y = \hat{Q}g$. Both vectors can be precomputed in $O(r^2)$ operation and do not depend on the row number $k$. Then $z = g - \hat{q}^k$ and
\begin{equation*}
    \tilde{q}' = \begin{bmatrix}
        \hat{Q} & \hat{q}'
    \end{bmatrix} \begin{bmatrix}
        -\hat{q}'_k (g - \hat{q}^k) \\
        1 + (g - \hat{q}^k)^T \hat{q}^k
    \end{bmatrix} = -\hat{q}'_k y + \hat{q}'_k \hat{Q}\hat{q}^k + (1 + (g - \hat{q}^k)^T \hat{q}^k) \hat{q}'.
\end{equation*}
Note that $\hat{Q} \hat{q}^k = e_k - \hat{q}'_k \hat{q}'$, whence
\begin{equation*}
    \tilde{q}' = -\hat{q}'_k y + \hat{q}'_k (e_k - \hat{q}'_k \hat{q}') + \hat{q}' + (g^T\hat{q}^k) \hat{q}' - ((\hat{q}^k)^T \hat{q}^k) \hat{q}'.
\end{equation*}
Since $(\hat{q}^k)^T \hat{q}^k = 1 - (\hat{q}'_k)^2$ and $g^T \hat{q}^k = y_k$, we obtain
\begin{equation*}
    \tilde{q}' = \hat{q}'_k (e_k - y) + y_k \hat{q}'.
\end{equation*}
Thus, the vector $\tilde{q}'$ can be computed in $O(r)$ operations if the vector $y$ is known. Then we can compute the residual norm when replacing $k$--th element in the set $\hat{J}$ in $O(r)$ operations, hence we can select replacement which maximizes $\min\limits_{u \in \mathbb{R}^r}\| V(\tilde{J})u - a(\tilde{J}) \|_\infty$ in $O(r^2)$ operations. The final procedure of finding the best replacement is presented in \alg{replacement_optimized}.

\begin{algorithm}
\caption{Optimal replacement of the row in the matrix of size $(r + 1) \times r$.}
\label{alg:replacement_optimized}
\begin{algorithmic}
\Require{Matrix with orthonormal columns $\hat{Q} \in \mathbb{R}^{(r+1) \times r}$, complement to the orthogonal matrix $\hat{q}' \in \mathbb{R}^{r+1}$, upper triangular matrix $\hat{R} \in \mathbb{R}^{r \times r}$, such that $\hat{V} = \hat{Q} \hat{R}$. Right-hand side vector  $\hat{a} \in \mathbb{R}^{r + 1}$, new row $h \in \mathbb{R}^r$, new right-hand side element $\xi \in \mathbb{R}$.}
\Ensure{$\hat{k}$ the row number in the matrix $\hat{V}$ that should be replaced by $h$ and $\hat{\mu}$ the new error value.}

\State{$y = \hat{Q} \cdot \texttt{solve\_triangular}(\hat{R}^T, h)$}
\State{$\hat{\mu} = -1$, $\hat{k} = -1$}

\For{$k=1,2,\dots,r+1$}
    \State{$\tilde{a} = \hat{a}$}
    \State{$\tilde{a}_k = \xi$}
    \State{$\tilde{q}' = \hat{q}_k'(e_k - y) + y_k \hat{q}'$}
    \State{$\mu = \dfrac{|(\tilde{q}')^T \tilde{a}|}{\|\tilde{q}'\|_1}$}
    \If{$\mu > \hat{\mu}$}
        \State{$ \hat{\mu} = \mu $}
        \State{$\hat{k} = k$}
    \EndIf
\EndFor
\end{algorithmic}
\end{algorithm}

After the element that needs to be replaced is found, we need to update the set indices and the QR decomposition for the corresponding submatrix. Let $\hat{V} = \hat{Q}\hat{R}$ and the matrix $\begin{bmatrix}
    \hat{Q} & \hat{q}'
\end{bmatrix}$ is orthogonal. Let the matrix $\tilde{V}$ be obtained from the matrix $\hat{V}$ by replacement of the $\hat{k}$--th row with $h \in \mathbb{R}^r$. We need to build the representation $\tilde{V} = \tilde{Q}\tilde{R}$ and a vector $\tilde{q}'$ such that $\begin{bmatrix}
    \tilde{Q} & \tilde{q'}
\end{bmatrix}$ is orthogonal and $\tilde{R}$ is non-singular upper triangular. Note that $\tilde{V} = \hat{V} + e_{\hat{k}}(h - \hat{v}^{\hat{k}})^T$ and $\hat{V} = \begin{bmatrix}
    \hat{Q} & \hat{q}'
\end{bmatrix}\begin{bmatrix}
    \hat{R}\\
    0
\end{bmatrix}$. Then the problem reduces to the classical rank-1 QR update, which can be done in $O(r^2)$ operations (see \cite[Section~6.5.1]{golub2013matrix}).

\subsection{Final algorithm}

In this section, we describe the algorithm, that solves the best uniform approximation problem with a Chebyshev matrix $V \in \mathbb{R}^{n\times r}$ and a vector $a \in \mathbb{R}^n$ in $O(r^3 + Inr)$ operations. Let us choose an arbitrary ordered set $\hat{J}_1$ of $r+1$ distinct row indices of the matrix $V$. Let $\hat{V} = V(\hat{J}_1)$ and $\hat{a} = a(\hat{J}_1)$. Let us build the complete QR decomposition $\hat{V} = \begin{bmatrix}
    \hat{Q} & \hat{q}'
\end{bmatrix}\begin{bmatrix}
    \hat{R} \\
    0
\end{bmatrix}.$ It can be done in $O(r^3)$ operations.

Let us compute the solution $\hat{u} \in \mathbb{R}^r$ to the problem $\| V(\hat{J}_1)u - a(\hat{J}_1) \|_\infty \to \min\limits_{u \in \mathbb{R}^r}$ and calculate the residual $w = a - V\hat{u}$. This step requires $O(nr)$ operations. If $\|w(\hat{J}_1)\|_\infty = \|w\|_\infty$, then $\hat{u}$ is the solution to the best uniform approximation problem with the matrix $V$ and the right-hand side $a$ by \thm{remez_opt_crit}. Otherwise, we have $\|w(\hat{J}_1)\|_\infty < \|w\|_\infty$ and we can find the position $\hat{j}$ of the maximum absolute value element in the vector $w$. Note that $\hat{j} \notin \hat{J}_1$. Using \alg{replacement_optimized} we can find the index $\hat{k}$ of element in the set $\hat{J}_1$, that needs to be replaced by $\hat{j}$ in $O(r^2)$ operations. Let us denote by $\hat{J}_2$ the set obtained from  $\hat{J}_1$ by the replacement of $\hat{k}$--th element with $\hat{j}$. In this case
\begin{equation*}
    \min\limits_{u \in \mathbb{R}^r}\| V(\hat{J}_1)u - a(\hat{J}_1) \|_\infty < \min\limits_{u \in \mathbb{R}^r}\| V(\hat{J}_2)u - a(\hat{J}_2) \|_\infty.
\end{equation*}
It remains to build the QR decomposition for the matrix $V(\hat{J}_2)$, which can be constructed from the QR decomposition for the matrix $V(\hat{J}_1)$ by the rank-1 QR update as described in \sect{iterative_update}. Then we can repeat the procedure for the set $\hat{J}_2$ and obtain $\hat{J}_3, \hat{J}_4,$ etc. Note that since $\hat{\mu}_t = \min\limits_{u \in \mathbb{R}^r}\| V(\hat{J}_t)u - a(\hat{J}_t) \|_\infty$ cannot increase indefinitely, in a finite number of steps the iterative procedure converges to the solution of the best uniform approximation problem. The final method is presented in \alg{final_accelerated_alg}. It is clear, that the complexity of the constructed algorithm is $O(r^3 + Inr)$.

\begin{algorithm}
\caption{Best uniform approximation algorithm.}
\label{alg:final_accelerated_alg}
\begin{algorithmic}
\Require{Chebyshev matrix $V \in \mathbb{R}^{n\times r}$, right-hand side $a \in \mathbb{R}^n$, initial ordered set $\hat{J}$.}
\Ensure{Solution $\hat{u} \in \mathbb{R}^r$ to the problem $\|Vu - a\|_\infty \to \min\limits_{u \in \mathbb{R}^r}$, characteristic set $\hat{J}$ of the problem.}

\State{$\hat{V} = V(\hat{J})$, $\hat{a} = a(\hat{J})$, $t=1$}
\State{$\hat{Q}, \hat{q}', \hat{R} \leftarrow \texttt{qr\_decomposition}(\hat{V})$} \Comment{$\hat{V} = \begin{bmatrix}
    \hat{Q} & \hat{q}'
\end{bmatrix}\begin{bmatrix}
    \hat{R} \\
    0
\end{bmatrix}$, $\begin{bmatrix}
    \hat{Q} & \hat{q}'
\end{bmatrix}$ is orthogonal}
\State{$\hat{u} \leftarrow \texttt{uniform\_approximation}(\hat{Q}, \hat{q}', \hat{R}, \hat{a})$  \Comment{\alg{small_problem_optimized}}}
\State{$w = a - V\hat{u}$}
\While{$\|w(\hat{J})\|_\infty < \|w\|_\infty$}
    \State{$\hat{j} \leftarrow \argmax\limits_{j \in \{1, \dots, n\}} |w_j|$}
    \State{$\hat{k} \leftarrow \texttt{best\_replacement}(\hat{Q}, \hat{q}', \hat{R}, v^{\hat{j}}, a_{\hat{j}})$ \Comment{\alg{replacement_optimized}}}
    \State{$\hat{a}_{\hat{k}} = a_{\hat{j}}$}
    \State{Replace $\hat{k}$--th row of the matrix $\hat{V}$ with $v^{\hat{j}}$}
    \State{Update QR decomposition factors $\hat{Q}$, $\hat{q}'$ and $\hat{R}$ for the matrix $\hat{V}$ \Comment{Rank-1 QR update}}
    \State{Replace $\hat{k}$--th element of the ordered set $\hat{J}$ with $\hat{j}$}
    \State{$\hat{u} \leftarrow \texttt{uniform\_approximation}(\hat{Q}, \hat{q}', \hat{R}, \hat{a})$  \Comment{\alg{small_problem_optimized}}}
    \State{$w = a - V\hat{u}$}
\EndWhile
\end{algorithmic}
\end{algorithm}

\section{Alternating minimization method}
\label{sec:alternance}
In this section we describe the alternating minimization method for building low-rank Chebyshev approximations and reveal the properties of its limit points.

\subsection{Method description}
\label{sec:alternating_minim_description}

Let $A \in \mathbb{R}^{m \times n}$ be a matrix. Here and further we assume that the sizes $m$ and $n$ are strictly greater than $1$. Our goal is to construct a low-rank entrywise approximation with rank $r$, namely,
\begin{equation}
\label{eq:main_cheb_matrix_probem}
    \|A - U V^T\|_C \to \min\limits_{U \in \mathbb{R}^{m \times r}, V \in \mathbb{R}^{n \times r}}.
\end{equation}
We also assume that $\rank A > r$. The problem \eq{main_cheb_matrix_probem} is difficult to solve directly \cite{gillis2019low}, so to tackle it we assume that one of the matrices ($U$ or $V$) is known. Then we can consider the problem
\begin{equation}
\label{eq:one_matrix_cheb_problem}
    \|A - U V^T\|_C \to \min\limits_{U \in \mathbb{R}^{m \times r}},
\end{equation}
which can be decomposed to the set of problems of the form
\begin{equation*}
    \|a - Vu\|_\infty \to \min\limits_{u \in \mathbb{R}^r},
\end{equation*}
where $V \in \mathbb{R}^{n \times r}$ and $a \in \mathbb{R}^n$. Let $V \in \mathbb{R}^{n \times r}$ be a Chebyshev matrix. Then there is a unique map (see \thm{exists_unique_cont} for the correctness) $\phi: \mathbb{R}^{m\times n} \times \mathbb{R}^{n \times r} \to \mathbb{R}^{m \times r}$ such that $\phi(A, V)^i = \argmin\limits_{x \in \mathbb{R}^r} \| a^i - Vx \|_\infty$,
where with superscript we denote the row of a matrix and $a^i \in \mathbb{R}^n$ is the $i$--th row of the matrix $A$. Note that $\phi(A, V)$ is a solution of \eq{one_matrix_cheb_problem} (however, the solution of \eq{one_matrix_cheb_problem} may be not unique). Similarly, we define the map
$\psi: \mathbb{R}^{m\times n} \times \mathbb{R}^{m \times r} \to \mathbb{R}^{n \times r}$ such that $\psi(A, U)^j = \argmin\limits_{x \in \mathbb{R}^r} \| a_j - Ux \|_\infty$,
where $a_j$ is the $j$--th column of the matrix $A$. $\psi(A, U)$ is a solution to the problem
\begin{equation*}
    \|A - U V^T\|_C \to \min\limits_{V \in \mathbb{R}^{n \times r}}.
\end{equation*}

\begin{definition}
\label{definition:alternating_minimization}
    Let $A \in \mathbb R^{m \times n}$ be a matrix. We say that the pair of sequences of Chebyshev matrices $\{U^{(t)} \in \mathbb{R}^{m \times r}\}_{t \in \mathbb N}$ and $\{V^{(t)} \in \mathbb{R}^{n \times r}\}_{t \in \mathbb N}$ is obtained by the \textit{alternating minimization method} for the matrix $A$ with the initial point $V^{(0)}$, where $V^{(0)} \in \mathbb{R}^{n \times r}$ is a Chebyshev matrix, if
    \begin{equation*}
    \begin{cases}
     U^{(t)} = \phi(A, V^{(t-1)}), \\
     V^{(t)} = \psi(A, U^{(t)}) \\
    \end{cases}
    \end{equation*}
    for all $t \in \mathbb N$.
\end{definition}

Note that if a matrix $V$ is Chebyshev, it does not imply that $\phi(A, V)$ is also Chebyshev. In \cite{morozov2023optimal} the authors show that for rank-1 approximation for almost all matrices $A$ if $V$ is Chebyshev, then $\phi(A, V)$ is also Chebyshev. However, it is no longer true for arbitrary rank approximation. Moreover, we conjecture that for $r \ge 2$ for almost all matrices $A \in \mathbb{R}^{m \times n}$ there exists a matrix $V \in \mathbb{R}^{n \times r}$ such that $\phi(A, V)$ is not Chebyshev. Fortunately, our numerical experiments demonstrate that such situations are rare. Nevertheless, when we apply \dfn{alternating_minimization} in our theoretical derivations, we need to explicitly assume that the generated matrices are Chebyshev.

Let us formulate the basic properties of the alternating minimization method.

\begin{lemma}
\label{lemma:basic_prop}
Let $A \in \mathbb{R}^{m \times n}$ and matrix $V^{(0)} \in \mathbb{R}^{n \times r}$ be Chebyshev. Let the pair of sequences $\{U^{(t)} \in \mathbb{R}^{m\times r}\}_{t \in \mathbb N}$ and $\{V^{(t)} \in \mathbb{R}^{n\times r}\}_{t \in \mathbb N}$ be generated by alternating minimization method for the matrix $A$ and the initial point $V^{(0)}$. Then the following statements hold.
\begin{enumerate}[label=(\roman*)]
    \item\label{basic_prop_i}
    We have
    \begin{equation*}
        \|A - U^{(t)} (V^{(t-1)})^T\|_C \ge \|A - U^{(t)} (V^{(t)})^T\|_C \ge \|A - U^{(t+1)} (V^{(t)})^T\|_C
    \end{equation*}
    for all $t \in \mathbb N$.
    \item\label{basic_prop_iii} If the pair of sequences $\{\tilde{U}^{(t)}\}_{t \in \mathbb N}$ and $\{\tilde{V}^{(t)}\}_{t \in \mathbb N}$ is obtained by the alternating minimization method for the matrix $A$ and the initial point $\alpha V^{(0)}$, where $\alpha \ne 0$, then $\tilde{U}^{(t)} = 1/\alpha\; U^{(t)}$ and $\tilde{V}^{(t)} = \alpha V^{(t)}$.
\end{enumerate}
\begin{proof}
By the construction of $\phi$ we have
\begin{equation*}
    \inf\limits_{U \in \mathbb R^{m \times r}} \|A - U (V^{(t)})^T\|_C = \|A - \phi(A,V^{(t)}) (V^{(t)})^T\|_C,
\end{equation*}
whence we have
\begin{equation*}
    \|A - U^{(t)} (V^{(t)})^T\|_C \ge \|A - U^{(t+1)} (V^{(t)})^T\|_C
\end{equation*}
since $U^{(t+1)} = \phi(T,V^{(t)})$. The other inequality in the statement~\ref{basic_prop_i} can be proved similarly.

The statement~\ref{basic_prop_iii} follows from the uniqueness of the solution to the best uniform approximation problem (see \thm{exists_unique_cont}).
\end{proof}
\end{lemma}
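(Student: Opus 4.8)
The plan is to prove both statements by simply unwinding the definitions of the maps $\phi$ and $\psi$ and invoking only the existence/uniqueness result \thm{exists_unique_cont}; no new machinery is needed.

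For \ref{basic_prop_i}, the key point is that $\phi(A,\cdot)$ and $\psi(A,\cdot)$ return genuine minimizers of the corresponding partial problems. I would first record that $\|A - UV^T\|_C = \max_i \|a^i - u^i V^T\|_\infty$, where $a^i$ and $u^i$ denote the $i$-th rows of $A$ and $U$; since the inner problem $\|(a^i)^T - V (u^i)^T\|_\infty \to \min$ decouples over $i$ and $\phi(A,V)$ solves each one optimally, we get $\inf_{U} \|A - UV^T\|_C = \|A - \phi(A,V)V^T\|_C$. Applying this with $V = V^{(t)}$ and using $U^{(t+1)} = \phi(A,V^{(t)})$ gives $\|A - U^{(t)}(V^{(t)})^T\|_C \ge \|A - U^{(t+1)}(V^{(t)})^T\|_C$. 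The middle inequality $\|A - U^{(t)}(V^{(t-1)})^T\|_C \ge \|A - U^{(t)}(V^{(t)})^T\|_C$ is the same argument with the two factors swapped: the column-wise decomposition $\|A - UV^T\|_C = \max_j \|a_j - Uv^j\|_\infty$ together with $V^{(t)} = \psi(A,U^{(t)})$ shows that $V^{(t)}$ minimizes $\|A - U^{(t)}V^T\|_C$ over $V$. Thus each inequality in \ref{basic_prop_i} is literally an instance of "a minimizer does no worse than an arbitrary competitor," and the only work is bookkeeping.

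For \ref{basic_prop_iii}, the elementary fact I would isolate is the behaviour of the best uniform approximation under scaling of the left factor: if $V$ is Chebyshev, $\alpha \neq 0$, and $\hat x$ solves $\|a - V x\|_\infty \to \min$, then $(1/\alpha)\hat x$ solves $\|a - (\alpha V) y\|_\infty \to \min$, because $(\alpha V)((1/\alpha)\hat x) = V\hat x$, so the attainable residuals — and in particular the optimal one — are unchanged; uniqueness follows from \thm{exists_unique_cont} since $\alpha V$ is again Chebyshev. Hence $\phi(A,\alpha V) = (1/\alpha)\,\phi(A,V)$ and, symmetrically, $\psi(A,\beta U) = (1/\beta)\,\psi(A,U)$. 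I would then induct on $t$: the base case $\tilde V^{(0)} = \alpha V^{(0)}$ holds by hypothesis, and assuming $\tilde V^{(t)} = \alpha V^{(t)}$ we get $\tilde U^{(t+1)} = \phi(A, \alpha V^{(t)}) = (1/\alpha)\,U^{(t+1)}$ and then $\tilde V^{(t+1)} = \psi(A, (1/\alpha)U^{(t+1)}) = \alpha\, V^{(t+1)}$.

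I do not anticipate a real obstacle. The one point deserving a line of care is that every matrix occurring in the argument — $\alpha V^{(0)}$, the $(1/\alpha)U^{(t)}$, the $\alpha V^{(t)}$ — is still Chebyshev, so that \dfn{alternating_minimization} and \thm{exists_unique_cont} genuinely apply; this is immediate because scaling a matrix by a nonzero constant multiplies each of its $r\times r$ minors by a nonzero power of that constant and therefore preserves nonsingularity. If anything is mildly fiddly it is writing the entrywise decomposition of $\|\cdot\|_C$ cleanly enough that the monotonicity in \ref{basic_prop_i} is manifest, but this is purely expository.
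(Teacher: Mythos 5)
Your proposal is correct and follows essentially the same route as the paper: part \ref{basic_prop_i} is the observation that $\phi$ and $\psi$ return genuine minimizers of the respective partial problems, and part \ref{basic_prop_iii} rests on the uniqueness guaranteed by \thm{exists_unique_cont} combined with the trivial scaling identity $(\alpha V)((1/\alpha)\hat x)=V\hat x$. The only difference is that you spell out the row/column decomposition of $\|\cdot\|_C$, the induction on $t$, and the preservation of the Chebyshev property under scaling, all of which the paper leaves implicit.
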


Let  $V \in \mathbb R^{n \times r}$ be a Chebyshev matrix and a pair of sequences $\{U^{(t)}\}_{t \in \mathbb N}$ and $\{V^{(t)}\}_{t \in \mathbb N}$ be obtained by the alternating minimization method for a matrix $A$ and an initial point $V^{(0)} = V$. From \lem{basic_prop}~\ref{basic_prop_i} it follows that the sequence $\|A - U^{(t)} (V^{(t)})^T\|_C$ is non-increasing and, since it consists only of non-negative numbers, converges. We shall denote its limit by $E(A, V)$. The following lemma contains the elementary properties of this function.

\begin{lemma}
\label{lemma:basic_prop_E}
Let $A \in \mathbb R^{m \times n}$ be a matrix and a matrix $V \in \mathbb R^{n\times r}$ be Chebyshev. Let also the alternating minimization method for the matrix $A$ and the initial point $V^{(0)} = V$ be correct (that is the generated matrices are Chebyshev). Then the following statements hold.
\begin{enumerate}[label=(\roman*)]
    \item\label{basic_prop_E_i} $E(A,V) \ge 0$ and $E(A,V) = E(A,\alpha V)$ for $\alpha \ne 0$.
    \item\label{basic_prop_E_ii} $E(A,V) = E(A,\tilde{V})$, where $\tilde{V} = \psi(A, \phi(A,V))$.
    \item\label{basic_prop_E_iii} The function $E(A,V)$ is upper semi-continuous with respect to $V$.
\end{enumerate}
\begin{proof}
    The statements~\ref{basic_prop_E_i} and~\ref{basic_prop_E_ii} directly follow from the definition of $E(T,V)$ and \lem{basic_prop}. The upper semi-continuity holds, because $E(A, V)$ is a limit of a decreasing sequence of continuous functions ($\phi$ and $\psi$ are continuous by \thm{exists_unique_cont}).
\end{proof}
\end{lemma}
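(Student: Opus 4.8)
The plan is to dispatch the three parts in order, each by reduction to results already in hand.

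For~\ref{basic_prop_E_i} and~\ref{basic_prop_E_ii} the work is essentially bookkeeping. Non-negativity is immediate, since $E(A,V)$ is by construction the limit of the sequence $\{\|A - U^{(t)}(V^{(t)})^T\|_C\}_{t}$ whose terms are all $\ge 0$. For the scaling identity I would invoke \lem{basic_prop}~\ref{basic_prop_iii}: the sequences produced from the initial point $\alpha V$ are $\tilde U^{(t)} = \alpha^{-1}U^{(t)}$ and $\tilde V^{(t)} = \alpha V^{(t)}$, so $\tilde U^{(t)}(\tilde V^{(t)})^{T} = U^{(t)}(V^{(t)})^{T}$ for every $t$; the two residual sequences agree termwise, hence have the same limit, giving $E(A,\alpha V) = E(A,V)$. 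For~\ref{basic_prop_E_ii}, note that $\tilde V = \psi(A,\phi(A,V)) = V^{(1)}$ with $\{U^{(t)},V^{(t)}\}$ generated from $V^{(0)}=V$. By uniqueness of the maps $\phi,\psi$ (\thm{exists_unique_cont}) and \dfn{alternating_minimization}, running the method from $\tilde V = V^{(1)}$ reproduces the shifted sequences $U^{(t+1)},V^{(t+1)}$, so the associated residual sequence is the tail $\{\|A - U^{(t+1)}(V^{(t+1)})^T\|_C\}_t$; deleting finitely many initial terms of a convergent sequence does not change its limit, hence $E(A,\tilde V) = E(A,V)$.

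For~\ref{basic_prop_E_iii}, set $g_t(V) = \|A - U^{(t)}(V^{(t)})^T\|_C$ with $U^{(t)},V^{(t)}$ generated from $V^{(0)}=V$. By \lem{basic_prop}~\ref{basic_prop_i} the sequence $\{g_t(V)\}_t$ is non-increasing, so $E(A,V) = \inf_t g_t(V)$. Each $g_t$ is continuous at $V$: the matrices $V^{(t)}$ and $U^{(t)}$ arise from $V$ by finitely many compositions of the maps $\phi(A,\cdot)$ and $\psi(A,\cdot)$, which are continuous by \thm{exists_unique_cont}, followed by the continuous norm $\|\cdot\|_C$. A pointwise infimum of functions each continuous at $V$ is upper semi-continuous at $V$: given $\varepsilon>0$, choose $t$ with $g_t(V) < E(A,V)+\varepsilon$ and a neighbourhood $\mathcal O$ of $V$ on which $g_t(V') < E(A,V)+\varepsilon$; then for every $V'\in\mathcal O$ at which the method is correct, $E(A,V') = \inf_s g_s(V') \le g_t(V') < E(A,V)+\varepsilon$, whence $\limsup_{V'\to V} E(A,V') \le E(A,V)$.

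The only point needing genuine care is in~\ref{basic_prop_E_iii}: one must ensure that the iterated maps $\phi(A,\cdot),\psi(A,\cdot)$ are not merely defined along the orbit of $V$ but remain defined and continuous on a whole neighbourhood of $V$, so that the $g_t$ are continuous \emph{at} $V$ and the $\varepsilon$-argument applies. This is where the openness of the Chebyshev condition is used — by \dfn{chebyshev_matrix} it amounts to the non-vanishing of finitely many $r\times r$ minors, an open requirement — together with the standing assumption that the method is correct at $V$; I expect this to be the main obstacle to a fully rigorous write-up, the rest being routine.
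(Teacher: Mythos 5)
Your proposal is correct and follows essentially the same route as the paper's (very terse) proof: parts~(i) and~(ii) via \lem{basic_prop} and the shift-invariance of limits, and part~(iii) by viewing $E(A,\cdot)$ as the infimum of a non-increasing sequence of functions that are continuous by \thm{exists_unique_cont}. Your added remark about needing the maps $\phi(A,\cdot)$, $\psi(A,\cdot)$ to be defined on a neighbourhood of $V$ (openness of the Chebyshev condition) is a legitimate refinement of a point the paper glosses over, but it does not change the argument.
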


The final procedure of the alternating minimization method is presented in the \alg{alternating_minimization}. Note that we also apply re-normalizations after each iteration of the method since, according to \lem{basic_prop_E}, they do not influence the solution, but improve the numerical stability.

Note that the question of the convergence of sequences $\{U^{(t)}\}_{t \in \mathbb N}$ and $\{V^{(t)}\}_{t \in \mathbb N }$ remains open, so the result is given in terms of limit points. In all numerical experiments, these sequences converge, but we can neither prove nor disprove the statement about their convergence. It is worth noting that a similar situation is known for the popular ALS algorithm \cite{mohlenkamp2013musings}.

\begin{algorithm}
\caption{Alternating minimization method.}
\label{alg:alternating_minimization}
\begin{algorithmic}
\Require{matrix $A \in \mathbb{R}^{m \times n}$, rank $r \ge 1$, initial matrix $V^{(0)} \in \mathbb{R}^{n \times r}$.}
\Ensure{factors of rank-$r$ approximation $\hat{U} \in \mathbb{R}^{m \times r}$, $\hat{V} \in \mathbb{R}^{n \times r}$.}

\State{$t = 1$}
\Repeat
\State{$U^{(t)} = \phi(A, V^{(t-1)})$}
\State{$V^{(t)} = \psi(A, U^{(t)})$}

\State{$C = \|U^{(t)}\|_C \|V^{(t)}\|_C$}
\State{$U^{(t)} = U^{(t)} / \|U^{(t)}\|_C \cdot C^{1/2}$}
\State{$V^{(t)} = V^{(t)} / \|V^{(t)}\|_C \cdot C^{1/2}$}
\State{$t = t + 1$}
\Until{convergence}
\State{$\hat{U} = U^{(t - 1)}, \quad \hat{V} = V^{(t - 1)}$}
\end{algorithmic}
\end{algorithm}

\subsection{Rank-$r$ alternance}
\label{sec:rank_r_alternance}

In this section, we provide the properties of the limit points obtained by the alternating minimization method. Let us introduce the necessary notations. Let $A \in \mathbb{R}^{m \times n}$ be a matrix and matrices $U \in \mathbb{R}^{m \times r}$ and $V \in \mathbb{R}^{n \times r}$ be Chebyshev. Let us denote $G = A - U V^T$. We also denote
\begin{equation*}
    S(A, U, V) = \{(i, j): |g_{ij}| = \|G\|_C\},
\end{equation*}
\begin{equation*}
    \mathcal{I}(A, U, V) = \{i: \exists j \text{ such that } (i, j) \in S(A, U, V) \},
\end{equation*}
\begin{equation*}
    \mathcal{J}(A, U, V) = \{j: \exists i \text{ such that } (i, j) \in S(A, U, V) \}.
\end{equation*}

\begin{definition}
\label{definition:alternance_rank_r}
    Let $A \in \mathbb{R}^{m \times n}$ be a matrix and matrices $U \in \mathbb{R}^{m \times r}$ and $V \in \mathbb{R}^{n \times r}$ be Chebyshev. We say that the triple $(A, U, V)$ possesses \textit{a $2$-way alternance of rank $r$}, if there is a non-empty set $\mathcal{A} \subset \{1, \dots, m\} \times \{1, \dots, n\}$ such that $\mathcal{A} \subset S(T, U, V)$ and if $(i, j) \in \mathcal{A}$, then there exist a set $I$ of $r+1$ different indices
    \begin{equation*}
        1 \le i_1 < i_2 < \dots < i_{r+1} \le m
    \end{equation*}
    such that $i \in I$ and a set $J$ of $r+1$ different indices
    \begin{equation*}
        1 \le j_1 < j_2 < \dots < j_{r+1} \le n
    \end{equation*}
    such that $j \in J$ with the following properties.
    \begin{enumerate}
        \item It holds
        \begin{equation*}
            (i, j_1), (i, j_2), \dots, (i, j_{r+1}), (i_1, j), (i_2, j), \dots, (i_{r+1}, j) \in \mathcal{A}.
        \end{equation*}
    
        \item The signs in the sequence
    \begin{equation*}
        g_{i j_1} D_1(\mathcal{V}), g_{i j_2} D_2(\mathcal{V}), \dots, g_{i j_{r+1}} D_{r+1}(\mathcal{V})
    \end{equation*}
    and the signs in the sequence
    \begin{equation*}
        g_{i_1 j} D_1(\mathcal{U}), g_{i_2 j} D_2(\mathcal{U}), \dots, g_{i_{r+1} j} D_{r+1}(\mathcal{U})
    \end{equation*}
    alternate, where $\mathcal{U} = U(I)$ and $\mathcal{V} = V(J)$.
    \end{enumerate}
\end{definition}

The main results of this section are
\begin{enumerate}
    \item if $\hat{U} \in \mathbb{R}^{m\times r}$ and $\hat{V} \in \mathbb{R}^{n\times r}$ are such that $\hat{U}\hat{V}^T$ is the best rank-$r$ approximation to the matrix $A$ in the Chebyshev norm, then $(A, \hat{U}, \hat{V})$ possesses a 2-way alternance of rank $r$;
    \item the limit points of the alternating minimization method possess a 2-way alternance of rank $r$.
\end{enumerate}

The rigorous formulations are as follows.
\begin{theorem}
\label{theorem:necessary_cond}
\label{THEOREM:NECESSARY_COND}
    Let $A \in \mathbb{R}^{m \times n}$ be a matrix, $\rank A > r$. Let $\hat{U} \in \mathbb{R}^{m\times r}$ and $\hat{V} \in \mathbb{R}^{n\times r}$ be a solution to the problem $\|A - UV^T\|_C \to \min\limits_{U \in \mathbb{R}^{m\times r}, V \in \mathbb{R}^{n\times r}}$.
    Let $\hat{V}$ be Chebyshev and the alternating minimization method for the matrix $A$ and the initial point $V^{(0)} = \hat{V}$ be correct. Then the triple $(A, \hat{U}, \hat{V})$ possesses a 2-way alternance of rank $r$.
\end{theorem}

\begin{theorem}
\label{theorem:2d_alternance}
\label{THEOREM:2D_ALTERNANCE}
    Let $A \in \mathbb{R}^{m \times n}$ be a matrix, $\rank A > r$ and the matrix $V \in \mathbb{R}^{n\times r}$ be Chebyshev. Let the alternating minimization method for the matrix $A$ and the initial point $V^{(0)} = V$ be correct and the sequences $\{U^{(t)}\}_{t \in \mathbb N}$ and $\{V^{(t)}\}_{t \in \mathbb N}$ be constructed by the alternating minimization method. Let a limit point $\Xi$ of the sequence $\Xi_t$, where $\Xi_t = V^{(t)} / \|V^{(t)}\|_C$ be Chebyshev. Then $(A, \phi(A, \Xi), \Xi)$ possesses a 2-way alternance of rank $r$.
\end{theorem}

Since the proofs of these theorems are rather technical, we place them in \sect{appendix_theorems}.

\section{Numerical evaluation}
\label{sec:numerical}
In this section, we numerically investigate the properties and effectiveness of the proposed method. The implementation of our algorithm is available online\footnote{\url{https://github.com/stanis-morozov/cheburaxa}}. We also compare our approach with the alternating projections method \cite{budzinskiy2023quasioptimal} for building low-rank approximations in the Chebyshev norm.
The method starts from a random rank-$r$ matrix and alternately projects to the $\varepsilon$--ball in the Chebyshev norm with the center being the target matrix and to the set of low-rank matrices via SVD. With this approach, the method tries to find an element in the intersection of the mentioned sets. Using the binary search the method estimates the approximation error $\varepsilon$.
For alternating projections we use the implementation and parameters provided by the authors.

\begin{figure}
  \begin{subfigure}{0.31\textwidth}
    \includegraphics[width=\linewidth]{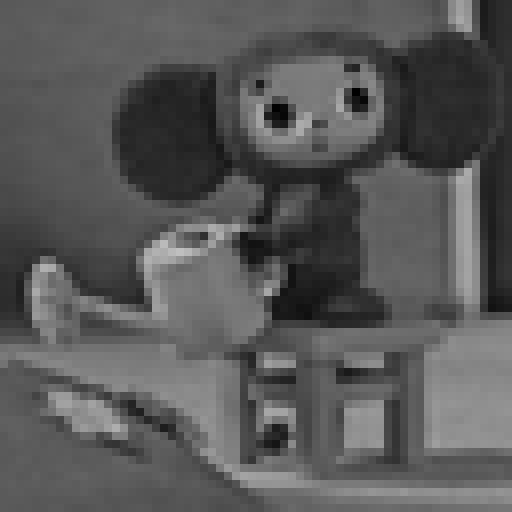}
    \caption{Original image} \label{fig:grayscale_alternance_a}
  \end{subfigure}%
  \hspace*{\fill}   
  \begin{subfigure}{0.31\textwidth}
    \includegraphics[width=\linewidth]{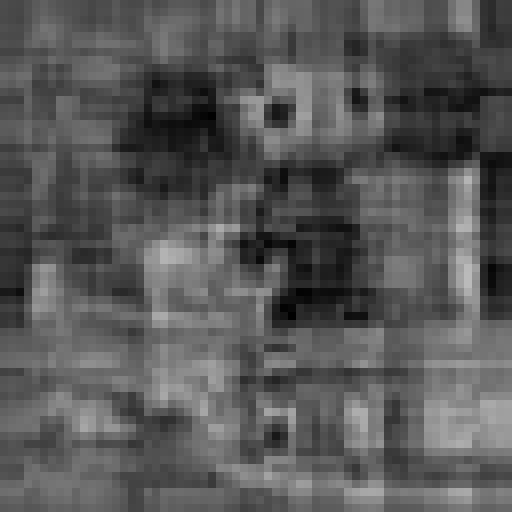}
    \caption{Approximation} \label{fig:grayscale_alternance_b}
  \end{subfigure}%
  \hspace*{\fill}   
  \begin{subfigure}{0.31\textwidth}
    \includegraphics[width=\linewidth]{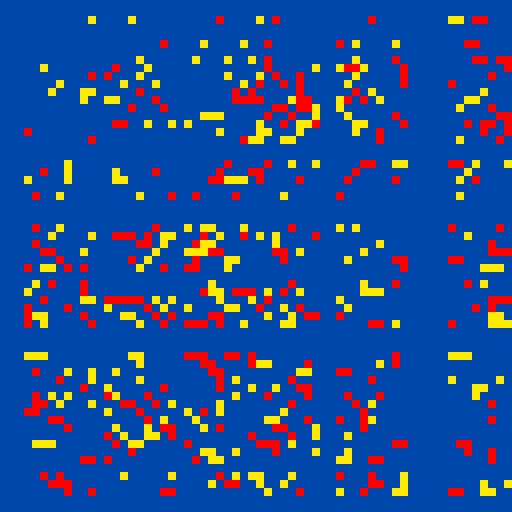}
    \caption{Alternance set} \label{fig:grayscale_alternance_c}
  \end{subfigure}

\caption{An example of low-rank approximation of a grayscale image. The left image corresponds to the original picture of size $64\times 64$, the middle image contains the approximation of rank $8$, the right image demonstrates the 2-way alternance of rank $r$. Blue pixels correspond to the positions where the maximum absolute values in the residual is not reached, yellow where it is reached with the positive signs and red where it is reached with the negative sign.} \label{fig:grayscale_alternance}
\end{figure}

\begin{figure}
     \centering
     \includegraphics[width=0.49\textwidth]{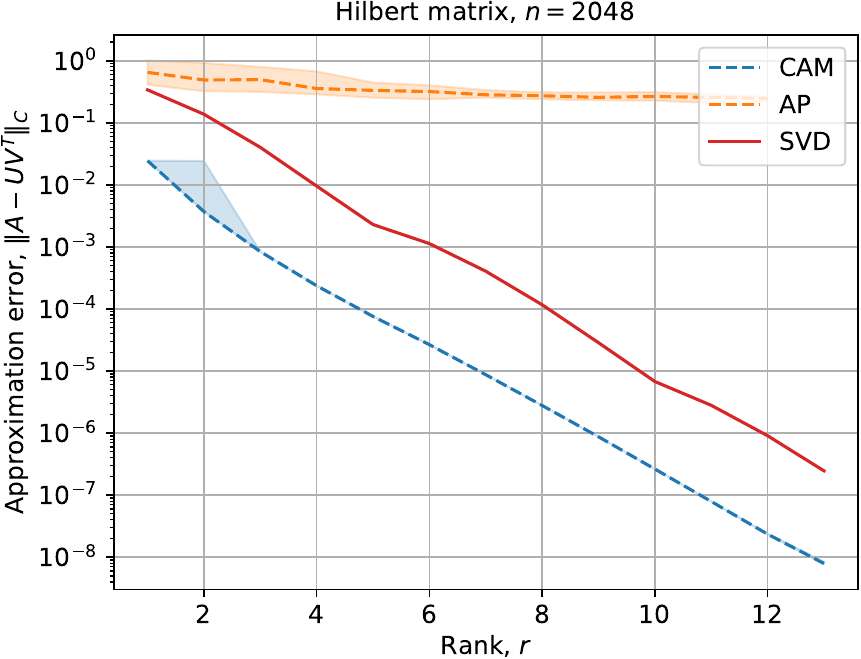}
     \hfill
     \includegraphics[width=0.49\textwidth]{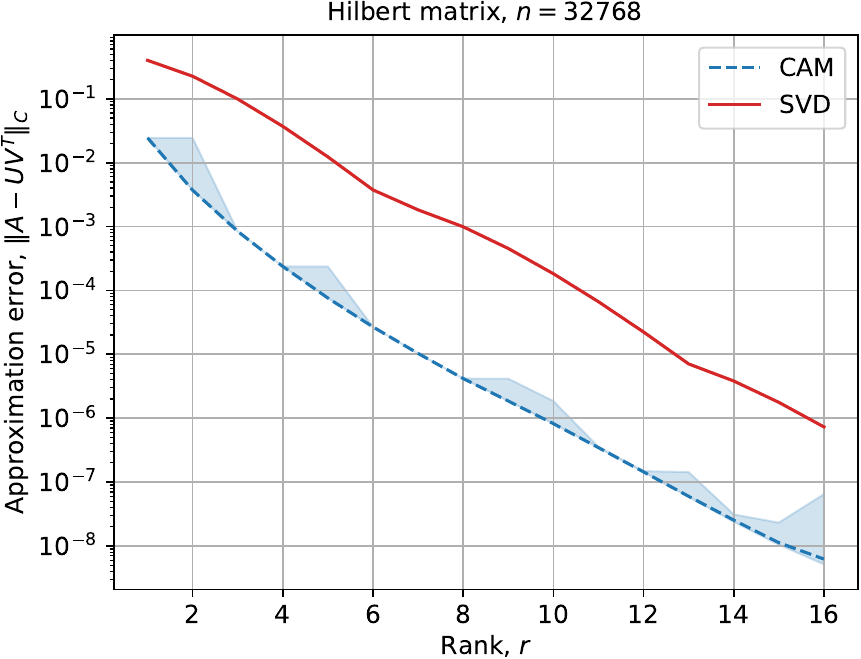}
    \caption{Approximation error of the Hilbert matrix with alternating minimization method (CAM), alternating projections (AP) and SVD. The left panel corresponds to the matrix of size $n=2,048$. The iterative procedures were started from $20$ random initial points. The colored domains correspond to the maximal and minimal values over the initial points and the dotted curves correspond to the median value. The right panel contains the results for the matrix of size $n=32,768$.}
    \label{fig:hilbert_results}
\end{figure}

\subsection{2-way alternance of rank $r$}
To illustrate the result of \thm{2d_alternance}, we run the alternating minimization algorithm for a grayscale image of size $64\times 64$. We represent the grayscale image as the matrix, where the elements are real values from 0 to 1. \fig{grayscale_alternance_a} demonstrates the initial image. Let $U$ and $V$ be the matrices constructed by the alternating minimization method for rank $8$. \fig{grayscale_alternance_b} shows $UV^T$ being the approximation of the image. We remind from \sect{rank_r_alternance} that $G = A - U V^T$ and $S(A, U, V) = \{(i, j): |g_{ij}| = \|G\|_C\}$. \fig{grayscale_alternance_c} shows the set $S(A, U, V)$, namely, the pixel in the position $(i, j)$ is yellow if $g_{ij} = \|G\|_C$, red if $g_{ij} = -\|G\|_C$ and blue if $(i, j) \notin S(A, U, V)$. The set $S(A, U, V)$ corresponds to the 2-way alternance of rank $r$. Note that every row and every column of the matrix in \fig{grayscale_alternance_c} contains either zero elements from the set $S(A, U, V)$, or at least $9$ elements. The signs do not alternate since we plot only the signs of $g_{ij}$ without the determinants (see \dfn{alternance_rank_r}). Note, however, that some rows and columns contain more than $9$ elements from the set $S(A, U, V)$, so we cannot choose the matrices $\mathcal{U}$ and $\mathcal{V}$ from \dfn{alternance_rank_r} that provide the alternance of signs for all elements in such rows and columns. But we can choose the corresponding matrices $\mathcal{U}$ and $\mathcal{V}$ for every subset of size $9$.

\begin{figure}
     \centering
     \includegraphics[width=0.49\textwidth]{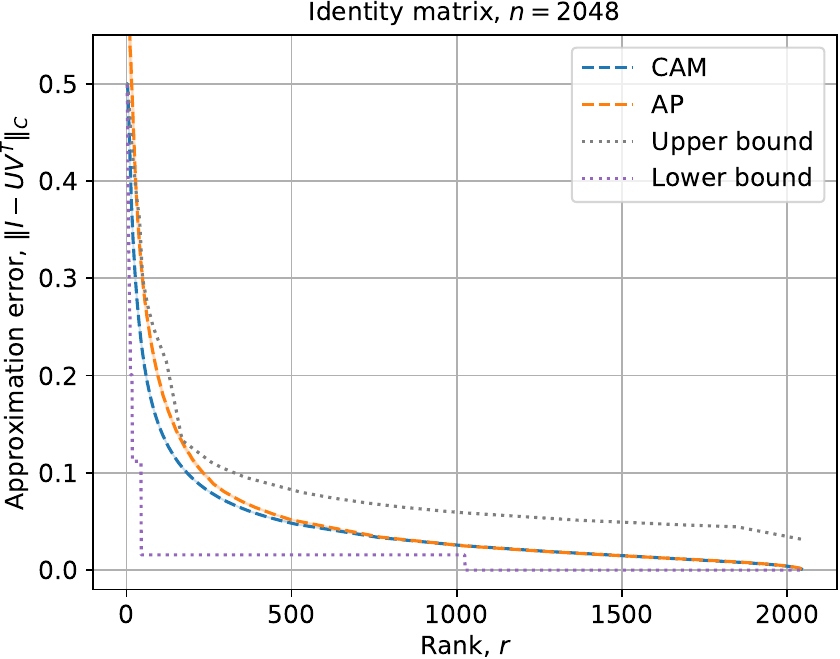}
     \hfill
     \includegraphics[width=0.49\textwidth]{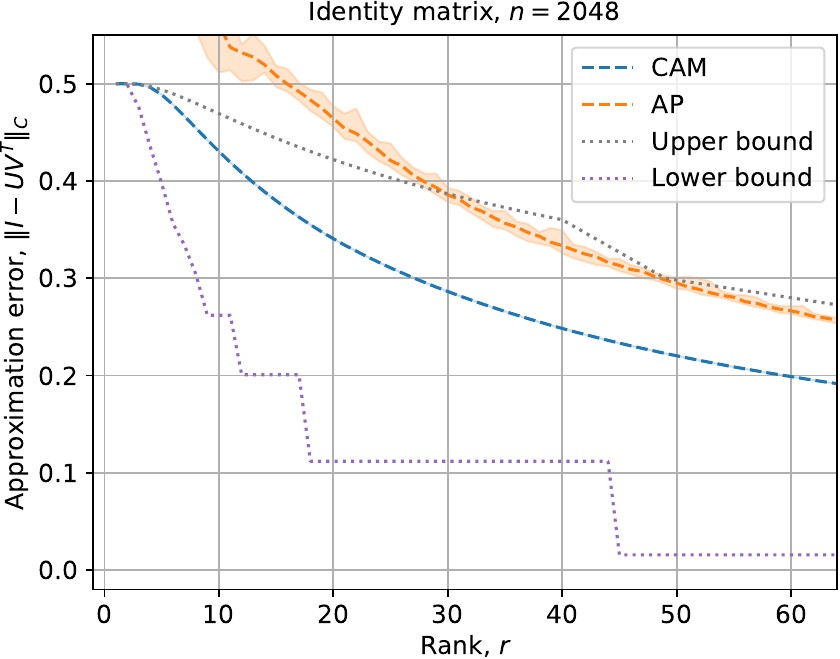}\\
    \caption{Approximation error of the identity matrix with alternating minimization method (CAM) and alternating projections (AP). The matrix size is $n=2,048$. The right plot corresponds to the zoom for the small ranks. The iterative procedures were started from $20$ random initial points. The colored domains correspond to the maximal and minimal values over the initial points and the dotted curves correspond to the median value. The plots also contain the known theoretical upper and lower bounds.}
    \label{fig:identity_results}
\end{figure}

\begin{table}
\centering
\begin{tabular}{|c| c c c c c c c c|} 
 \hline
 Accuracy, $\varepsilon$ & $128$ & $256$ & $512$ & $1,024$ & $2,048$ & $4,096$ & $8,192$ & $16,384$ \\
 \hline
 0.45 & 6  & 6  & 7   & 8   & 9   & 10  & 11  & 12 \\ 
 0.4  & 8  & 9  & 10  & 12  & 13  & 15  & 17  & 18 \\
 0.25 & 17 & 22 & 27  & 33  & 40  & 47  & 54  & 62 \\
 0.1  & 60 & 84 & 112 & 145 & 184 & 228 & 278 & 333 \\
 \hline
\end{tabular}
\caption{The minimal rank needed for the alternating minimization method to achieve the approximation accuracy $\varepsilon$ for the identity matrices of different sizes.}
\label{table:identity_acc_ranks}
\end{table}

\subsection{Hilbert matrix}

The matrix $H = \begin{bmatrix}
    \dfrac{1}{i+j}
\end{bmatrix}_{i,j=1}^{n}$ is known to have exponentially fast decaying singular values, which follows that $H$ can be efficiently approximated by low-rank matrices via SVD. \fig{hilbert_results} shows the results for the matrices of sizes $n=2,048$ and $n=32,768$ and different ranks. We compare the approximation quality in the Chebyshev norm obtained by the proposed Chebyshev alternating minimization (CAM), SVD and alternating projections (AP). For alternating minimization and alternating projections we run the iterative procedures from $20$ random points. To investigate the stability to initial points we draw the maximum and minimum approximation results among $20$ initial points and colorize the domain between them. We also draw the median value with the dotted line. One can see the uniform superiority of the alternating minimization method over the baselines both in terms of accuracy and stability. This experiment demonstrates that even for matrices with rapidly decaying singular values the Chebyshev approximation can be much better than that provided by SVD. For the matrix of size $n=32,768$ we do not provide the results for the alternating projection method since the computation time becomes infeasible.

\begin{figure}
     \centering
     \includegraphics[width=0.49\textwidth]{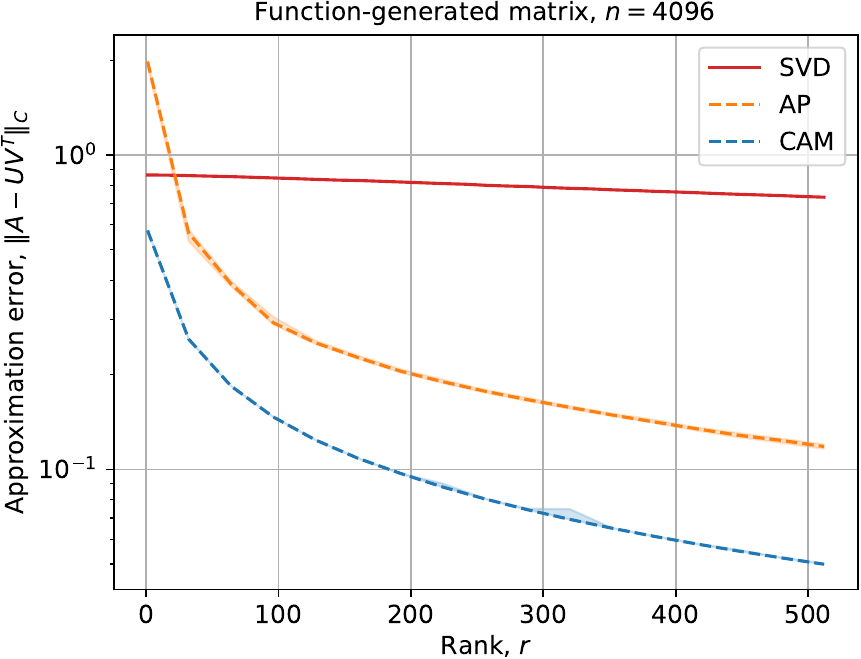}
    \caption{Approximation error for the function-generated matrix with alternating minimization method (CAM), alternating projections (AP) and SVD. The matrix size is $n=4,096$. The iterative procedures were started from $20$ random initial points. The colored domains correspond to the maximal and minimal values over the initial points and the dotted curves correspond to the median value.}
    \label{fig:func_gen}
\end{figure}

\subsection{Identity matrix}

In this experiment, we try to approximate the identity matrix in the Chebyshev norm. \fig{identity_results} shows the results for the matrix of size $n=2,048$. As in the previous experiment, we run the methods from $20$ random initial points and draw the maximum, minimum and median values. We also plot the known in literature upper and lower bounds on the uniform approximation of the identity matrix (see \cite{kashin1975poperechnikah, pinkus2012n, shannon1959probability} for upper bounds and \cite{garnaev1984poperechnikah, gluskin1986octahedron} for lower bounds). One can see that the results of the alternating minimization method are perfectly between the lower and upper bounds for all ranks. One can also see that alternating minimization provides the better approximation than alternating projections again, especially for low ranks, that are usually more practical.

\tab{identity_acc_ranks} shows the values of minimal rank needed to reach the approximation accuracy $\varepsilon$ with the alternating minimization method for $\varepsilon \in \{0.45, 0.4, 0.25, 0.1\}$ for different sizes of the identity matrix. The experiment gives an idea of the accuracy asymptotic of the low-rank approximation of the identity matrix and demonstrates the scalability of the proposed algorithm.

\subsection{Function-generated matrices}

It is shown in \cite{udell2019big} and \cite{budzinskiy2024big} that certain classes of function-generated matrices can be well approximated by low-rank matrices in the Chebyshev norm. In particular, it is shown that if $\{x_j\}_{j=1}^{n}$ are randomly uniformly distributed on the $d$-dimensional sphere, then the matrix $A \in \mathbb{R}^{n\times n}$, where
\begin{equation}
\label{eq:gaussian_kernel_matrix}
    a_{ij} = \exp{\left(-\|x_i - x_j\|_2^2 \right)},
\end{equation}
admits an accurate low-rank approximation. In this experiment, we generate $n=4,096$ random points from the uniform distribution on the $d$-dimensional sphere, where $d=8,192$, and compute the matrix $A$ according to the formulas \eq{gaussian_kernel_matrix}. We compare the approximation quality in the Chebyshev norm obtained by the Chebyshev alternating minimization method (CAM), SVD and alternating projections (AP) in \fig{func_gen}. For alternating minimization and alternating projections we again run the iterative procedure from $20$ random initial points and draw the maximum, minimum and median values. The matrix $A$ is the same for all methods. One can see in \fig{func_gen} that alternating minimization method again outperforms the baselines both in accuracy and stability.

\section{Conclusion}
\label{sec:conclusion}
In this paper, we propose the accelerated algorithm for solving the best uniform approximation problem. Via the numerical evaluation, we demonstrate the effectiveness of the accelerated alternating minimization method and its superiority over the alternating projections method in terms of accuracy, stability and computation time. We also propose a convenient equioscillation criterion for a vector to be the solution to the best uniform approximation problem. Finally, we introduce the notion of a $2$-way alternance of rank $r$ and demonstrate that it constitutes the necessary condition of the best low-rank approximation in the Chebyshev norm. We study the alternating minimization algorithm theoretically and show that this necessary condition is satisfied by all limits points of the algorithm.

\bibliography{wileyNJD-AMA}

\appendix

\bmsection{Proof of THEOREM~\ref{theorem:necessary_cond} and THEOREM~\ref{theorem:2d_alternance}}
\label{sec:appendix_theorems}
To prove the theorems we need several technical lemmas.

\begin{lemma}\label{lemma:alt_basic1}
    Let $A \in \mathbb{R}^{m \times n}$, $\rank A > r$ and matrices $V \in \mathbb{R}^{n\times r}$, $U = \phi(A, V)$ be Chebyshev. Then for all $ i \in \mathcal{I}(A, U, V)$ there exists a set $J = (j_1, \dots, j_{r+1})$, where
    \begin{equation*}
        1 \le j_1 < j_2 < \dots < j_{r+1} \le n,
    \end{equation*}
    such that $(i, j_1), \dots, (i, j_{r+1}) \in S(A, U, V)$ and the signs in the sequence
    \begin{equation*}
        g_{i j_1} D_1(\mathcal{V}), g_{i j_2} D_2(\mathcal{V}), \dots, g_{i j_{r+1}} D_{r+1}(\mathcal{V})
    \end{equation*}
    alternate, where $\mathcal{V} = V(J)$.
\begin{proof}
    Let $ i \in \mathcal{I}(A, U, V)$. By the definition of $\phi$, the $i$--th row of the matrix $U$ is the solution to the problem
    \begin{equation*}
        \|a^i - V x\|_\infty \to \min\limits_{x \in \mathbb{R}^r},
    \end{equation*}
    whence by \thm{residual_alternating_signs} there is a set of $r+1$ integers
    \begin{equation*}
        1 \le j_1 < j_2 < \dots < j_{r+1} \le n
    \end{equation*}
    such that the signs in the sequence
    \begin{equation*}
        (a_{i j_1} - (v^{j_1})^T u^{i}) D_1(\mathcal{V}), (a_{i j_2} - (v^{j_2})^T u^{i}) D_2(\mathcal{V}), \dots, (a_{i j_{r+1}} - (v^{j_{r+1}})^T u^{i}) D_1(\mathcal{V})
    \end{equation*}
    alternate and
    \begin{equation*}
        |a_{i j_1} - (v^{j_1})^T u^{i}| = \dots = |a_{i j_{r+1}} - (v^{j_{r+1}})^T u^{i}| = \|a^i - V u^{i}\|_\infty = \|G\|_C,
    \end{equation*}
    where the last equality is due to $ i \in \mathcal{I}(A, U, V)$.
\end{proof}
\end{lemma}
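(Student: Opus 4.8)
The plan is to reduce the statement to the equioscillation criterion of \thm{residual_alternating_signs}, applied to the single row-approximation problem indexed by $i$. Fix $i \in \mathcal{I}(A, U, V)$. By the definition of the map $\phi$, the $i$--th row $u^i$ of $U = \phi(A, V)$ solves
\begin{equation*}
    \|a^i - V x\|_\infty \to \min\limits_{x \in \mathbb{R}^r},
\end{equation*}
and the residual of this problem is exactly $g^i$, the $i$--th row of $G = A - U V^T$, since $g_{ij} = a_{ij} - (v^j)^T u^i$.

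Before invoking \thm{residual_alternating_signs} I would verify its hypotheses. The matrix $V$ is Chebyshev by assumption. Since $\rank A > r$ while $\rank(U V^T) \le r$, we have $A \ne U V^T$, hence $\|G\|_C > 0$. Because $i \in \mathcal{I}(A, U, V)$, there is some $j$ with $|g_{ij}| = \|G\|_C$, so $\|g^i\|_\infty = \|G\|_C > 0$; in particular $a^i$ does not belong to the image of $V$. Thus \thm{residual_alternating_signs} is applicable to the row problem above.

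Applying that theorem with $w = g^i$ yields indices $1 \le j_1 < \dots < j_{r+1} \le n$ with $|g_{i j_1}| = \dots = |g_{i j_{r+1}}| = \|g^i\|_\infty$ and such that the signs in $g_{i j_1} \Delta_1, \dots, g_{i j_{r+1}} \Delta_{r+1}$ alternate, where $\Delta_k = \det V((j_1, \dots, j_{k-1}, j_{k+1}, \dots, j_{r+1})) = D_k(\mathcal{V})$ for $J = (j_1, \dots, j_{r+1})$ and $\mathcal{V} = V(J)$. Since $\|g^i\|_\infty = \|G\|_C$, each $|g_{i j_k}| = \|G\|_C$, i.e. $(i, j_k) \in S(A, U, V)$, and the required sign alternation is precisely the one the theorem provides.

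The argument is short and I do not expect a genuine obstacle; the only points that need care are the observation that $\|g^i\|_\infty = \|G\|_C$ for indices $i \in \mathcal{I}(A,U,V)$ (which is what lets one upgrade ``$|w_{j_k}| = \|w\|_\infty$'' to ``$(i,j_k)\in S(A,U,V)$''), the verification that $a^i \notin \im V$ (for which the hypothesis $\rank A > r$ is essential, as it forces $\|G\|_C > 0$), and the bookkeeping that identifies the determinants $\Delta_k$ of \thm{residual_alternating_signs} with the quantities $D_k(\mathcal{V})$ appearing in the statement.
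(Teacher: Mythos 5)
Your argument is correct and follows the same route as the paper: apply the equioscillation criterion (\thm{residual_alternating_signs}) to the row problem solved by $u^i = \phi(A,V)^i$ and use $i \in \mathcal{I}(A,U,V)$ to upgrade $\|g^i\|_\infty$ to $\|G\|_C$. Your additional check that $a^i \notin \im V$ (via $\rank A > r$ forcing $\|G\|_C > 0$) is a hypothesis verification the paper's proof leaves implicit, and it is valid.
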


\begin{lemma}\label{lemma:alt_basic2}
    Let $A \in \mathbb{R}^{m \times n}$, $\rank A > r$ and matrices $V \in \mathbb{R}^{n\times r}$, $U = \phi(A, V)$, $\tilde{V} = \psi(A, U)$
    be Chebyshev.
    Let also $\|A - UV^T\|_C = \|A - U\tilde{V}^T\|_C$. Then $S(A, U, \tilde{V}) \subset S(A, U, V)$. Moreover, the following conditions are equivalent
    \begin{enumerate}
        \item $j \in \mathcal{J}(A, U, \tilde{V})$;
        \item $j \in \mathcal{J}(A, U, V)$ and $v^j = \tilde{v}^j$;
        \item there exists a set $I = (i_1, \dots, i_{r+1})$ such that
    \begin{equation*}
        1 \le i_1 < i_2 < \dots < i_{r+1} \le m,
    \end{equation*}
    $(i_1, j), \dots, (i_{r+1}, j) \in S(A, U, V)$ and the signs in the sequence
    \begin{equation*}
        g_{i_1 j} D_1(\mathcal{U}), g_{i_2 j} D_2(\mathcal{U}), \dots, g_{i_{r+1} j} D_{r+1}(\mathcal{U})
    \end{equation*}
    alternate, where $\mathcal{U} = U(I)$.
    \end{enumerate}
\begin{proof}
    Consider an index $j$. Since $\tilde{V} = \psi(A, U)$,
    \begin{equation*}
        \| a_j - U v^j \|_\infty \ge \| a_j - U \tilde{v}^j \|_\infty.
    \end{equation*}
    Let $j \notin \mathcal{J}(A, U, V)$. Then $\| a_j - U v^j \|_\infty < \|A - U V^T\|_C$,
    whence $\| a_j - Uv^j \|_\infty < \|A - U \tilde{V}^T\|_C$,
    therefore $j \notin \mathcal{J}(A, U, \tilde{V})$.

    Now let $\tilde{v}^j \neq v^j$. Due to the uniqueness of the uniform approximation problem (see \thm{exists_unique_cont}),
    \begin{equation*}
        \| a_j - U \tilde{v}^j \|_\infty < \| a_j - U v^j \|_\infty \le
        \|A - UV^T\|_C = \|A - U \tilde{V}^T\|_C,
    \end{equation*}
    whence follows $j \notin \mathcal{J}(A, U, \tilde{V})$. Thus we have proven that if $j \in \mathcal{J}(A, U, \tilde{V})$, then $j \in \mathcal(A, U, V)$ and $\tilde{v}^j = v^j$.
    
    Let us prove the reversal, let $j \in \mathcal{J}(A, U, V)$ and $\tilde{v}^j = v^j$. From the latter we have
    \begin{equation*}
        \| a_j - U v^j \|_\infty = \| a - U \tilde{v}^j\|_\infty,
    \end{equation*}
    and from $j \in \mathcal{J}(A, U, V)$ we obtain
    \begin{equation*}
        \| a_j - U v^j \|_\infty = \|A - UV^T \|_C = \|A - U \tilde{V}^T\|_C.
    \end{equation*}
    Therefore, $j \in \mathcal{J}(A, U, \tilde{V})$.

    Since $\tilde{v}^j$ is the solution to the problem
    \begin{equation*}
    \| a_j - Ux \|_\infty \to \min\limits_{x \in \mathbb{R}^r},
    \end{equation*}
    by \thm{residual_alternating_signs} there exists a set $I = (i_1, i_2, \dots, i_{r+1})$, where
    \begin{equation*}
    1 \le i_1 < i_2 < \dots < i_{r+1} \le m,
    \end{equation*}
    such that
    \begin{equation*}
        |a_{i_1 j} - (\tilde{v}^{j})^T u^{i_1}| = |a_{i_1 j} - (\tilde{v}^{j})^T u^{i_1}| = \| a_j - U \tilde{v}^j\|_{\infty},
    \end{equation*}
    and the signs in the sequence
    \begin{equation*}
        (a_{i_1 j} - (\tilde{v}^{j})^T u^{i_1}) D_1(\mathcal{U}), (a_{i_2 j} - (\tilde{v}^{j})^T u^{i_2}) D_2(\mathcal{U}), \dots, (a_{i_{r+1} j} - (\tilde{v}^{j})^T u^{i_{r+1}}) D_{r+1}(\mathcal{U})
    \end{equation*}
    alternate, where $\mathcal{U} = U(I)$. Since $j \in \mathcal{J}(A, U, \tilde{V})$,
    \begin{equation*}
        \| a_j - U \tilde{v}^j\|_{\infty} = \|A - U\tilde{V}^T\|_C = \|A - UV^T\|_C,
    \end{equation*}
    therefore $(i_1, j), (i_2, j), \dots, (i_{r+1}, j) \in S(A, U, \tilde{V})$, but due to $v^j = \tilde{v}^j$,
    \begin{equation*}
        (i_1, j), (i_2, j), \dots, (i_{r+1}, j) \in S(A, U, V)
    \end{equation*}
    and the signs is the sequence
    \begin{equation*}
        (a_{i_1 j} - (v^{j})^T u^{i_1}) D_1(\mathcal{U}), (a_{i_2 j} - (v^{j})^T u^{i_2}) D_2(\mathcal{U}), \dots, (a_{i_{r+1} j} - (v^{j})^T u^{i_{r+1}}) D_{r+1}(\mathcal{U})
    \end{equation*}
    alternate.

    Conversely, let there is a set $I = (i_1, i_2, \dots, i_{r+1})$, where
    \begin{equation*}
    1 \le i_1 < i_2 < \dots < i_{r+1} \le m,
    \end{equation*}
    such that $(i_1, j), \dots, (i_{r+1}, j) \in S(A, U, V)$ and the signs in the sequence
    \begin{equation*}
        g_{i_1 j} D_1(\mathcal{U}), g_{i_2 j} D_2(\mathcal{U}), \dots, g_{i_{r+1} j} D_{r+1}(\mathcal{U})
    \end{equation*}
    alternate, where $\mathcal{U} = U(I)$. Then, by \thm{residual_alternating_signs}, $v^j$ is the solution to the problem
    \begin{equation*}
    \| a_j - Ux \|_\infty \to \min\limits_{x \in \mathbb{R}^r},
    \end{equation*}
    but by \thm{exists_unique_cont} the solution is unique, hence $v^j = \tilde{v}^j$ and $j \in \mathcal{J}(A, U, V)$.

    Let us finally prove that $S(A, U, \tilde{V}) \subset S(A, U, V)$. Let a pair $(i, j) \in S(A, U, \tilde{V})$. Then $j \in \mathcal{J}(A, U, \tilde{V})$, whence $\tilde{v}^j = v^j$. Hence,
    \begin{equation*}
        a_{i j} - (v^j)^T u^i = a_{i j} - (\tilde{v}^j)^T u^i.
    \end{equation*}
    However,
    \begin{equation*}
        |a_{i j} - (\tilde{v}^j)^T u^i| = \|A - U \tilde{V}^T\|_C = \|A - UV^T\|_C,
    \end{equation*}
    which follows that $(i, j) \in S(A, U, V)$.
\end{proof}
\end{lemma}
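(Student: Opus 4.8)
The plan is to squeeze the relevant uniform-approximation errors between two quantities that the hypothesis forces to be equal. For every column index $j$ one has the chain
\[
\|a_j - U\tilde{v}^j\|_\infty \;\le\; \|a_j - Uv^j\|_\infty \;\le\; \|A - UV^T\|_C \;=\; \|A - U\tilde{V}^T\|_C,
\]
where the first inequality holds because $\tilde{v}^j$ is the $j$th row of $\psi(A,U)$ and hence minimizes $\|a_j - Ux\|_\infty$, the second because $a_j - Uv^j$ is the $j$th column of $A - UV^T$, and the final equality is the assumption of the lemma. This chain is the engine of the whole argument.

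First I would prove $(1)\Rightarrow(2)$. If $j \in \mathcal{J}(A, U, \tilde{V})$ then $\|a_j - U\tilde{v}^j\|_\infty = \|A - U\tilde{V}^T\|_C$, which forces every inequality in the chain to be an equality. Equality in the middle term, together with the uniqueness of the best uniform approximation (\thm{exists_unique_cont}), yields $\tilde{v}^j = v^j$, and equality in the outer term gives $\|a_j - Uv^j\|_\infty = \|A - UV^T\|_C$, i.e.\ $j \in \mathcal{J}(A, U, V)$. The converse $(2)\Rightarrow(1)$ runs the chain in reverse: from $\tilde{v}^j = v^j$ we get $\|a_j - U\tilde{v}^j\|_\infty = \|a_j - Uv^j\|_\infty$, and from $j \in \mathcal{J}(A, U, V)$ this equals $\|A - UV^T\|_C = \|A - U\tilde{V}^T\|_C$, so $j \in \mathcal{J}(A, U, \tilde{V})$.

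Next, for $(2)\Leftrightarrow(3)$, I would apply \thm{residual_alternating_signs} to the problem $\|a_j - Ux\|_\infty \to \min$ with the Chebyshev matrix $U$ and right-hand side $a_j$. Since $\tilde{v}^j$ is the unique minimizer, $v^j = \tilde{v}^j$ is equivalent to $v^j$ being that minimizer, which by the theorem is equivalent to the existence of a set $I = (i_1 < \dots < i_{r+1})$ on which $|(a_j - Uv^j)_{i_k}| = \|a_j - Uv^j\|_\infty$ and the signs of $(a_j - Uv^j)_{i_k}D_k(U(I))$ alternate. Because $g_{i j} = a_{ij} - (v^j)^T u^i$ is precisely the $i$th entry of $a_j - Uv^j$, this is the sign condition of $(3)$; and the extra requirement in $(3)$ that each $(i_k, j)$ lie in $S(A, U, V)$, i.e.\ $\|a_j - Uv^j\|_\infty = \|G\|_C$, is exactly the statement $j \in \mathcal{J}(A, U, V)$. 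I expect the main obstacle to be keeping straight which half of $(3)$ matches which conjunct of $(2)$: the alternating-sign set is an \emph{optimality} certificate, equivalent to $v^j = \tilde{v}^j$, whereas its magnitude reaching the global Chebyshev norm is a \emph{separate} condition, equivalent to $j \in \mathcal{J}(A, U, V)$; these two must not be fused.

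Finally, the inclusion $S(A, U, \tilde{V}) \subset S(A, U, V)$ follows as a corollary. If $(i, j) \in S(A, U, \tilde{V})$ then $j \in \mathcal{J}(A, U, \tilde{V})$, so $v^j = \tilde{v}^j$ by the equivalence just proved; hence $g_{ij} = a_{ij} - (v^j)^T u^i = a_{ij} - (\tilde{v}^j)^T u^i$, whose absolute value equals $\|A - U\tilde{V}^T\|_C = \|A - UV^T\|_C = \|G\|_C$, and therefore $(i, j) \in S(A, U, V)$.
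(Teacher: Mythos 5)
Your proposal is correct and follows essentially the same route as the paper's proof: the same chain of inequalities forced into equalities by the hypothesis, uniqueness from \thm{exists_unique_cont} to get $v^j=\tilde v^j$, and \thm{residual_alternating_signs} applied to the column problem $\|a_j-Ux\|_\infty\to\min$ to convert optimality into the alternance set, with membership in $S(A,U,\tilde V)$ upgrading the column-local maxima to global ones. The only difference is cosmetic — you prove $(1)\Rightarrow(2)$ directly via the equality chain where the paper argues by contraposition — and your explicit separation of the optimality certificate from the magnitude condition is exactly the point the paper's proof also relies on.
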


\begin{lemma}
\label{lemma:interchange_sign_alternate}
\label{LEMMA:INTERCHANGE_SIGN_ALTERNATE}
Let a matrix $\hat{V} \in \mathbb{R}^{(r+1)\times r}$ and $h \in \mathbb{R}^{r}$ are such that $\begin{bmatrix}
    \hat{V} \\
    h^T
\end{bmatrix} \in \mathbb{R}^{(r+2)\times r}$ is Chebyshev. Let a vector $\hat{w} \in \mathbb{R}^{r+1}$ have non-zero components and $\xi \in \mathbb{R}$, $\xi \neq 0$. Let the signs in the sequence
\begin{equation*}
    \hat{w}_1 D_1(\hat{V}), \hat{w}_2 D_2(\hat{V}), \dots, \hat{w}_{r+1} D_{r+1}(\hat{V})
\end{equation*}
alternate. Then there is $k \in \{1, \dots, r+1\}$ such that the matrix $\tilde{V}$ is obtained from $\hat{V}$ by the replacement of $k$--th row with the vector $h$ and the vector $\tilde{w}$ is obtained from $\hat{w}$ by the replacement of $k$--th element with $\xi$ and the signs in the sequence
\begin{equation*}
    \tilde{w}_1 D_1(\tilde{V}), \tilde{w}_2 D_2(\tilde{V}), \dots, \tilde{w}_{r+1} D_{r+1}(\tilde{V})
\end{equation*}
alternate.
\end{lemma}
Since the proof of the last lemma is not directly connected with the idea of alternance, we place it in \sect{appendix_proof} in order not to interrupt the thought.

\begin{lemma}\label{lemma:alt_basic3}
    Let $A \in \mathbb{R}^{m \times n}$, $\rank A > r$ and matrices $V \in \mathbb{R}^{n\times r}$, $U = \phi(A, V)$, $\tilde{V} = \psi(A, U)$
    be Chebyshev. Let also
    \begin{equation*}
        \|A - U V^T\|_C = \|A - U \tilde{V}^T\|_C
    \end{equation*}
    and
    \begin{equation*}
        S(A, U, V) = S(A, U, \tilde{V}).
    \end{equation*}
    Then $(A, U, V)$ possesses a 2-way alternance of rank $r$.
\begin{proof}
    Let us show that $(A, U, V)$ possesses a 2-way alternance of rank $r$ with the set of indices $\mathcal{A} = S(A, U, V)$. Let $(i, j) \in \mathcal{A}$, then $i \in \mathcal{I}(A, U, V)$ and by \lem{alt_basic1} there exists set $J = (j_1, \dots, j_{r+1})$, where
    \begin{equation*}
        1 \le j_1 < j_2 < \dots < j_{r+1} \le n
    \end{equation*}
    such that $(i, j_1), \dots, (i, j_{r+1}) \in S(A, U, V)$ and the signs in the sequence
    \begin{equation*}
        g_{i j_1} D_1(\mathcal{V}), g_{i j_2} D_2(\mathcal{V}), \dots, g_{i j_{r+1}} D_{r+1}(\mathcal{V})
    \end{equation*}
    alternate, where $\mathcal{V} = V(J)$. It can happen that $j \notin J$. But due to \lem{interchange_sign_alternate} there is a set $J'$, obtained from $J$ by the replacement of one element with $j$, such that the signs in the sequence
    \begin{equation}
    \label{eq:resulting_alternance}
        g_{i j'_1} D_1(\mathcal{V}'), g_{i j'_2} D_2(\mathcal{V}'), \dots, g_{i j'_{r+1}} D_{r+1}(\mathcal{V}')
    \end{equation}
    alternate, where $J' = (j'_1, \dots, j'_{r+1})$ and $\mathcal{V}' = V(J')$. The set $J'$ can be not ordered, but note that the swap of two neighboring elements in $J'$ preserves the alternance in \eq{resulting_alternance}. Therefore, there is a set $J' = (j'_1, \dots, j'_{r+1})$, where
    \begin{equation*}
        1 \le j'_1 < j'_2 < \dots < j'_{r+1} \le n,
    \end{equation*}
    such that $j \in J'$, $(i, j'_1), \dots, (i, j'_{r+1}) \in S(A, U, V)$ and the signs in \eq{resulting_alternance} alternate.

    Let $(i, j) \in S(A, U, \tilde{V}) = S(A, U, V) = \mathcal{A}$. Then $j \in \mathcal{J}(A, U, \tilde{V})$, whence from \lem{alt_basic2} there exists a set $I = (i_1, \dots, i_{r+1})$ such that
    \begin{equation*}
        1 \le i_1 < i_2 < \dots < i_{r+1} \le m,
    \end{equation*}
    $(i_1, j), \dots, (i_{r+1}, j) \in S(A, U, V)$ and the signs in the sequence
    \begin{equation*}
        g_{i_1 j} D_1(\mathcal{U}), g_{i_2 j} D_2(\mathcal{U}), \dots, g_{i_{r+1} j} D_{r+1}(\mathcal{U})
    \end{equation*}
    alternate, where $\mathcal{U} = U(I)$. If $i \notin I$, we can repeat the reasoning above and derive that there is a set $I' = (i'_1, \dots, i'_{r+1})$, where
    \begin{equation*}
        1 \le i'_1 < i'_2 < \dots < i'_{r+1} \le m,
    \end{equation*}
    such that $i \in I'$, $(i'_1, j), \dots, (i'_{r+1}, j) \in S(A, U, V)$ and
    \begin{equation*}
        g_{i'_1 j} D_1(\mathcal{U}'), g_{i'_2 j} D_2(\mathcal{U}'), \dots, g_{i'_{r+1} j} D_{r+1}(\mathcal{U}')
    \end{equation*}
    alternate, where $\mathcal{U}' = U(I')$.
    Therefore, all properties from the 2-way alternance of rank $r$ definition are fulfilled.
\end{proof}
\end{lemma}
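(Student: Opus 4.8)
The plan is to exhibit the alternance set directly: take $\mathcal{A} = S(A,U,V)$ and verify that $(A,U,V)$ satisfies \dfn{alternance_rank_r} with this $\mathcal{A}$. It is non-empty because $\rank A > r$ forces $A \neq UV^T$, so $\|G\|_C > 0$ with $G = A - UV^T$; and $\mathcal{A} \subset S(A,U,V)$ trivially. Hence everything comes down to the following: fix $(i,j) \in \mathcal{A}$ and produce an ordered set $J = (j_1 < \dots < j_{r+1})$ with $j \in J$ and an ordered set $I = (i_1 < \dots < i_{r+1})$ with $i \in I$ such that $(i,j_1),\dots,(i,j_{r+1}),(i_1,j),\dots,(i_{r+1},j) \in \mathcal{A}$ and the two sign sequences of \dfn{alternance_rank_r} alternate.

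For the set $J$: since $(i,j) \in S(A,U,V)$ we have $i \in \mathcal{I}(A,U,V)$, so \lem{alt_basic1} gives an ordered set $J_0 = (j_1 < \dots < j_{r+1})$ with $(i,j_k) \in S(A,U,V)$ and alternating signs in $g_{ij_1}D_1(V(J_0)),\dots,g_{ij_{r+1}}D_{r+1}(V(J_0))$. If $j \in J_0$, take $J = J_0$. Otherwise invoke \lem{interchange_sign_alternate} with $\hat{V} = V(J_0)$, $h = v^j$, $\hat{w} = (g_{ij_1},\dots,g_{ij_{r+1}})$, $\xi = g_{ij}$: its hypotheses hold since the $r+2$ rows $v^{j_1},\dots,v^{j_{r+1}},v^j$ are distinct rows of the Chebyshev matrix $V$ (so every $r \times r$ submatrix of their stack is an $r\times r$ submatrix of $V$, hence nonsingular), since $|g_{ij_k}| = \|G\|_C > 0$, and since $|g_{ij}| = \|G\|_C \neq 0$. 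This replaces one entry of $J_0$ by $j$ and keeps the alternance; after re-sorting the (a priori unordered) index set --- which preserves the alternating pattern --- we obtain the desired $J$ with $j \in J$, all $(i,j_k) \in \mathcal{A}$, and alternating signs of $g_{ij_k}D_k(V(J))$.

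For the set $I$: the assumptions $\|A - UV^T\|_C = \|A - U\tilde{V}^T\|_C$ and $S(A,U,V) = S(A,U,\tilde{V})$ allow us to read $(i,j) \in \mathcal{A}$ as $(i,j) \in S(A,U,\tilde{V})$, hence $j \in \mathcal{J}(A,U,\tilde{V})$. The equivalence of conditions (1) and (3) in \lem{alt_basic2} then yields an ordered set $I_0 = (i_1 < \dots < i_{r+1})$ with $(i_k,j) \in S(A,U,V)$ and alternating signs in $g_{i_1 j}D_1(U(I_0)),\dots,g_{i_{r+1} j}D_{r+1}(U(I_0))$. Repeating the insertion-and-resort argument of the previous paragraph with $U$, $u^i$, $(g_{i_1 j},\dots,g_{i_{r+1} j})$, $\xi = g_{ij}$ in place of $V$, $v^j$, $\hat{w}$, $\xi$ produces an ordered $I$ with $i \in I$, all $(i_k,j) \in \mathcal{A}$, and the required alternance. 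Combining the sets $I$ and $J$ completes the verification of \dfn{alternance_rank_r}.

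The main obstacle is the bookkeeping in the insertion step. One must check the hypotheses of \lem{interchange_sign_alternate} (which reduce to the Chebyshev property of $V$, resp.\ $U$, together with $\|G\|_C > 0$) and, more delicately, that re-sorting the index set after a replacement does not spoil the alternating-sign pattern. The latter is a short direct computation: an adjacent transposition in the index set flips the sign of $D_k$ for every $k$ outside the transposed pair and interchanges the two boundary terms, so the alternating pattern survives up to an overall sign, which is irrelevant.
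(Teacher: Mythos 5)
Your proposal is correct and follows essentially the same route as the paper's proof: take $\mathcal{A} = S(A,U,V)$, obtain the row-wise set from \lem{alt_basic1} and the column-wise set from the hypothesis $S(A,U,V)=S(A,U,\tilde V)$ together with \lem{alt_basic2}, and insert the missing index via \lem{interchange_sign_alternate} followed by re-sorting. Your extra checks (non-emptiness of $\mathcal{A}$, the Chebyshev hypothesis of the stacked $(r+2)\times r$ matrix, and the sign bookkeeping under adjacent transpositions) are all accurate and merely make explicit what the paper leaves implicit.
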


\begin{lemma}\label{lemma:alt_basic4}
    Let $A \in \mathbb{R}^{m \times n}$, $\rank A > r$ and matrices $V \in \mathbb{R}^{n\times r}$, $U = \phi(A, V)$, $\tilde{V} = \psi(A, U)$
    , $\tilde{U} = \phi(A, \tilde{V})$
    be Chebyshev. Let also $\|A - \tilde{U}\tilde{V}^T\|_C = \|A - UV^T\|_C$ and $(A, \tilde{U}, \tilde{V})$ possesses a 2-way alternance of rank $r$. Then $(A, U, V)$ also possesses a 2-way alternance of rank $r$.
\begin{proof}

    By \lem{basic_prop}~\ref{basic_prop_i}
    \begin{equation*}
        \|A - UV^T\|_C \ge \|A - U \tilde{V}^T\|_C \ge \|A - \tilde{U} \tilde{V}^T\|_C,
    \end{equation*}
    however, the first and last terms are equal, hence
    \begin{equation*}
        \|A - UV^T\|_C = \|A - U \tilde{V}^T\|_C = \|A - \tilde{U} \tilde{V}^T\|_C.
    \end{equation*}
    Applying \lem{alt_basic2}, we get $S(A, U, \tilde{V}) \subset S(T, U, V)$.
    Since $\|A^T - \tilde{V}U^T\|_C = \|A^T - \tilde{V} \tilde{U}^T\|_C$, we have $S(A^T, \tilde{V}, \tilde{U}) \subset S(A^T, \tilde{V}, U)$, which is equivalent to $S(A, \tilde{U}, \tilde{V}) \subset S(A, U, \tilde{V})$, hence $S(A, \tilde{U}, \tilde{V}) \subset S(A, U, V)$.

    Let $(i, j)$ belongs to the 2-way alternance of rank $r$ for a triple $(A, \tilde{U}, \tilde{V})$ and $I=(i_1, \dots, i_{r+1})$ and $J=(j_1, \dots, j_{r+1})$ be the sets of indices from the definition of the alternance. From \lem{alt_basic2} we have $u^i = \tilde{u}^i$ for all $i \in \mathcal{I}(A, \tilde{U}, \tilde{V})$, hence $U(I) = \tilde{U}(I)$. Similarly, we have $V(J) = \tilde{V}(J)$. Therefore,
    \begin{equation*}
        (a_{i_k j} - (u^{i_k})^T v^j) D_k(U(I)) = (a_{i_k j} - (\tilde{u}^{i_k})^T v^j) D_k(\tilde{U}(I)), \quad k = 1, 2, \dots, r + 1
    \end{equation*}
    since $j \in J$. Also we have,
    \begin{equation*}
        (a_{i j_k} - (u^{i})^T v^{j_k}) D_k(V(J)) = (a_{i j_k} - (u^{i})^T \tilde{v}^{j_k}) D_k(\tilde{V}(J)), \quad k = 1, 2, \dots, r + 1
    \end{equation*}
    since $i \in I$.
    Therefore, a 2-way alternance of rank $r$ for $(A, \tilde{U}, \tilde{V})$ is the 2-way alternance of rank $r$ for $(A, U, V)$.
\end{proof}
\end{lemma}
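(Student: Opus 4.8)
The plan is to reproduce, in a slightly different configuration, the mechanism of \lem{alt_basic3}: first force all the approximation errors in the tower $V,\,U,\,\tilde V,\,\tilde U$ down to a single common value, and then show that the \emph{same} set of indices that witnesses the rank-$r$ alternance of $(A,\tilde U,\tilde V)$ also witnesses one for $(A,U,V)$, the point being that on the index sets the alternance hands us, the two factorizations agree entry by entry.

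First I would apply \lem{basic_prop}~\ref{basic_prop_i} to the first steps of the alternating sequence started at $V^{(0)}=V$ (whose terms are $U^{(1)}=U$, $V^{(1)}=\tilde V$, $U^{(2)}=\tilde U$) to get
\[
 \|A-UV^T\|_C\ \ge\ \|A-U\tilde V^T\|_C\ \ge\ \|A-\tilde U\tilde V^T\|_C ,
\]
so, since the two outer terms agree by hypothesis, all three Chebyshev norms are equal. In particular $\|A-UV^T\|_C=\|A-U\tilde V^T\|_C$, which lets me invoke \lem{alt_basic2} for $(A,U,V)$ with $\tilde V=\psi(A,U)$: this gives $S(A,U,\tilde V)\subset S(A,U,V)$ and the key fact that $v^j=\tilde v^j$ for every $j\in\mathcal J(A,U,\tilde V)$. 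Running the very same lemma on the transposed problem---with $A^T$ in place of $A$, the matrix $U$ in the role of the base matrix (so that $\tilde V=\phi(A^T,U)=\psi(A,U)$ plays the role of the minimizer and $\tilde U=\psi(A^T,\tilde V)=\phi(A,\tilde V)$ that of its re-minimization), using $\|A-U\tilde V^T\|_C=\|A-\tilde U\tilde V^T\|_C$---yields $S(A,\tilde U,\tilde V)\subset S(A,U,\tilde V)$ and, symmetrically, $u^i=\tilde u^i$ for every $i\in\mathcal I(A,\tilde U,\tilde V)$. Chaining the two inclusions gives $S(A,\tilde U,\tilde V)\subset S(A,U,V)$, so the alternance set $\mathcal A$ of $(A,\tilde U,\tilde V)$ already satisfies $\mathcal A\subset S(A,U,V)$, which is the first clause of \dfn{alternance_rank_r}.

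It then remains to check the two local sign conditions. Fix $(i,j)\in\mathcal A$ and take the witnessing sets $I=(i_1,\dots,i_{r+1})\ni i$ and $J=(j_1,\dots,j_{r+1})\ni j$ supplied by the alternance of $(A,\tilde U,\tilde V)$. Because $(i_k,j),(i,j_k)\in\mathcal A\subset S(A,\tilde U,\tilde V)$, each $i_k\in\mathcal I(A,\tilde U,\tilde V)$ and each $j_k\in\mathcal J(A,\tilde U,\tilde V)$, and likewise $i\in\mathcal I(A,\tilde U,\tilde V)$, $j\in\mathcal J(A,\tilde U,\tilde V)$; by the previous step this forces $u^{i_k}=\tilde u^{i_k}$, $v^{j_k}=\tilde v^{j_k}$, $u^i=\tilde u^i$, $v^j=\tilde v^j$. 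Hence $U(I)=\tilde U(I)$ and $V(J)=\tilde V(J)$, so $D_k(U(I))=D_k(\tilde U(I))$ and $D_k(V(J))=D_k(\tilde V(J))$; and the residual entries along the cross through $(i,j)$ are unchanged, $g_{i_k j}=\tilde g_{i_k j}$ and $g_{i j_k}=\tilde g_{i j_k}$. Therefore the sequences $g_{i j_1}D_1(V(J)),\dots,g_{i j_{r+1}}D_{r+1}(V(J))$ and $g_{i_1 j}D_1(U(I)),\dots,g_{i_{r+1} j}D_{r+1}(U(I))$ are literally the ones appearing in the alternance of $(A,\tilde U,\tilde V)$, hence their signs alternate, and all the listed pairs lie in $\mathcal A\subset S(A,U,V)$. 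Thus $\mathcal A$ is a rank-$r$ alternance set for $(A,U,V)$.

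I expect the only genuinely delicate point to be the transposed application of \lem{alt_basic2}: one must set up the dictionary $\psi(A,\cdot)\leftrightarrow\phi(A^T,\cdot)$, $S(A,\cdot,\cdot)\leftrightarrow S(A^T,\cdot,\cdot)$, $\mathcal I\leftrightarrow\mathcal J$ correctly, verify that the equality-of-errors hypothesis transposes to precisely what \lem{alt_basic2} requires, and then check that the indices $i_k,j_k$ coming out of the alternance of $(A,\tilde U,\tilde V)$ really lie in $\mathcal I(A,\tilde U,\tilde V)$ and $\mathcal J(A,\tilde U,\tilde V)$, so that the row equalities $u^{i_k}=\tilde u^{i_k}$, $v^{j_k}=\tilde v^{j_k}$ are actually available. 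Everything past that is substitution of equal quantities.
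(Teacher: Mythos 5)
Your proposal follows the paper's proof essentially step for step: the same chain of inequalities from \lem{basic_prop}~\ref{basic_prop_i} forcing all three errors to coincide, the same double application of \lem{alt_basic2} (once directly, once transposed) to obtain $S(A,\tilde U,\tilde V)\subset S(A,U,V)$ together with $u^i=\tilde u^i$ and $v^j=\tilde v^j$ on the relevant index sets, and the same conclusion that the witnessing sets $I,J$ carry over unchanged because $U(I)=\tilde U(I)$ and $V(J)=\tilde V(J)$. The argument is correct; if anything, you spell out the transposition dictionary and the membership of the $i_k,j_k$ in $\mathcal I(A,\tilde U,\tilde V)$, $\mathcal J(A,\tilde U,\tilde V)$ more explicitly than the paper does.
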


\begin{lemma}\label{lemma:existence_alternance1}
    Let $A \in \mathbb{R}^{m \times n}$, $\rank A > r$ and a matrix $V \in \mathbb{R}^{n\times r}$ be Chebyshev. Let the sequences $\{U^{(t)}\}_{t \in \mathbb N}$ and $\{V^{(t)}\}_{t \in \mathbb N}$ be Chebyshev and constructed by the alternating minimization method for the matrix $A$ and the initial point $V^{(0)} = V$. Let also $\Xi \in \mathbb{R}^{n \times r}$ be a limit point of the sequence $\Xi^{(t)} = V^{(t)} / \|V^{(t)}\|_C$ and Chebyshev. Then
    \begin{equation*}
            E(A,\Xi) = \|A - \phi(A,\Xi) \Xi^T\|_C = E(A,V).
    \end{equation*}

    \begin{proof}        
        \lem{basic_prop_E}~\ref{basic_prop_E_ii} implies $E(T, \Xi^{(t)}) = E(A,V)$ for all $t \in \mathbb N$. Due to the upper semi-continuity of $E$ (see \lem{basic_prop_E}~\ref{basic_prop_E_iii}) we have $E(A,\Xi) \ge E(A,V)$. Moreover, $\|A - \phi(A,\Xi)\Xi^T\|_C \ge E(A,\Xi)$.
        \begin{multline*}
            \|A - \phi(A,\Xi)\Xi^T\|_C = \lim\limits_{t \to \infty} \|A - \phi(A,\Xi^{(l_t)})(\Xi^{(l_t)})^T\|_C =\\
            \lim\limits_{t \to \infty} \|A - \phi(A, V^{(l_t)}) (V^{(l_t)})^T\|_C = E(A,V).
        \end{multline*}
        Therefore,
        \begin{equation*}
            E(A,V) = \|A - \phi(A,\Xi)\Xi^T\|_C \ge E(A,\Xi) \ge E(A,V)
        \end{equation*}
        and the lemma is proven.

    \end{proof}
\end{lemma}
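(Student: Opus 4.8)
The plan is to establish the ring of (in)equalities
\[
E(A,V) \;=\; \|A - \phi(A,\Xi)\Xi^T\|_C \;\ge\; E(A,\Xi) \;\ge\; E(A,V),
\]
which forces all three quantities to coincide. Throughout, fix a subsequence $(l_t)$ with $\Xi^{(l_t)} \to \Xi$, which exists because $\Xi$ is a limit point of $\{\Xi^{(t)}\}$; note that $V^{(t)}$, $U^{(t)}$, $\Xi^{(t)}$ and $\Xi$ are all Chebyshev by hypothesis, so $\phi$ is defined and continuous at each of them by \thm{exists_unique_cont}.

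First I would record the relevant homogeneity. Exactly as in the proof of \lem{basic_prop}~\ref{basic_prop_iii}, uniqueness of the best uniform approximation (\thm{exists_unique_cont}) gives $\phi(A,\alpha W) = \alpha^{-1}\phi(A,W)$ for $\alpha \ne 0$, so the product $\phi(A,\Xi^{(t)})(\Xi^{(t)})^T$ is unchanged when $\Xi^{(t)}$ is replaced by $V^{(t)}$; concretely $\phi(A,\Xi^{(t)})(\Xi^{(t)})^T = \phi(A,V^{(t)})(V^{(t)})^T = U^{(t+1)}(V^{(t)})^T$. Also, applying \lem{basic_prop_E}~\ref{basic_prop_E_ii} to $V^{(t-1)}$ (whose $\psi(A,\phi(A,\cdot))$ image is $V^{(t)}$) and iterating gives $E(A,V^{(t)}) = E(A,V)$, and then $E(A,\Xi^{(t)}) = E(A,V)$ for all $t$ by the scale invariance of \lem{basic_prop_E}~\ref{basic_prop_E_i}.

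The two inequalities are now short. For $E(A,\Xi) \ge E(A,V)$, use the upper semi-continuity of $W \mapsto E(A,W)$ (\lem{basic_prop_E}~\ref{basic_prop_E_iii}): along $\Xi^{(l_t)} \to \Xi$ the values $E(A,\Xi^{(l_t)})$ are identically $E(A,V)$, so $\limsup_{t} E(A,\Xi^{(l_t)}) = E(A,V)$ and hence $E(A,\Xi) \ge E(A,V)$. The inequality $\|A - \phi(A,\Xi)\Xi^T\|_C \ge E(A,\Xi)$ is immediate, since running the alternating minimization from $V^{(0)} = \Xi$ makes $\|A - \phi(A,\Xi)\Xi^T\|_C = \|A - U^{(1)}(V^{(0)})^T\|_C$ the first term of the non-increasing sequence $\|A - U^{(t)}(V^{(t)})^T\|_C$ (by \lem{basic_prop}~\ref{basic_prop_i}) whose limit defines $E(A,\Xi)$.

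The remaining equality $\|A - \phi(A,\Xi)\Xi^T\|_C = E(A,V)$ is the step I expect to require the most care, as it is where continuity and monotonicity must be combined. Continuity of $\phi$, of matrix multiplication and of $\|\cdot\|_C$, together with $\Xi^{(l_t)} \to \Xi$, gives
\[
\|A - \phi(A,\Xi)\Xi^T\|_C = \lim_{t\to\infty} \|A - \phi(A,\Xi^{(l_t)})(\Xi^{(l_t)})^T\|_C = \lim_{t\to\infty} \|A - U^{(l_t+1)}(V^{(l_t)})^T\|_C,
\]
using the homogeneity identity above. By \lem{basic_prop}~\ref{basic_prop_i}, $\|A - U^{(s)}(V^{(s)})^T\|_C \ge \|A - U^{(s+1)}(V^{(s)})^T\|_C \ge \|A - U^{(s+1)}(V^{(s+1)})^T\|_C$, so $\|A - U^{(l_t+1)}(V^{(l_t)})^T\|_C$ is squeezed between two subsequences of the sequence $\|A - U^{(t)}(V^{(t)})^T\|_C$, which converges to $E(A,V)$ by the definition of $E$; hence the limit is $E(A,V)$. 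This closes the ring and proves the lemma. The only real bookkeeping is to keep the scaling conventions for $\phi$ straight, and to observe that $\Xi$ being Chebyshev is precisely what makes $\phi(A,\Xi)$ well defined and $\phi$ continuous at $\Xi$.
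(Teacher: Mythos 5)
Your proposal is correct and follows essentially the same route as the paper: the same ring of inequalities $E(A,V) = \|A - \phi(A,\Xi)\Xi^T\|_C \ge E(A,\Xi) \ge E(A,V)$, with upper semi-continuity of $E$ for the second inequality and continuity of $\phi$ along the convergent subsequence for the first equality. Your additional bookkeeping (the homogeneity identity $\phi(A,\alpha W)=\alpha^{-1}\phi(A,W)$ and the explicit squeeze of $\|A - U^{(l_t+1)}(V^{(l_t)})^T\|_C$ between consecutive terms of the monotone sequence) just makes explicit what the paper leaves implicit.
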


\begin{lemma}\label{lemma:existence_alternance2}
    Let $A \in \mathbb{R}^{m \times n}$ be a matrix, $\rank A > r$ and a matrix $V \in \mathbb{R}^{n\times r}$ be Chebyshev. Let the alternating minimization method for the matrix $A$ and the initial point $V$ be correct. Then if $\|A - \phi(A,V) V^T\|_C = E(A,V)$, then $(A,\phi(A,V),V)$ possesses a 2-way alternance of rank $r$.
    \begin{proof}

        Let the pair of sequences $\{U^{(t)}\}_{t \in \mathbb N}$ and $\{V^{(t)}\}_{t \in \mathbb N}$ be constructed by the alternating minimization method for $A$ and the initial point $V^{(0)} = V$. Clearly,
        \begin{equation*}
            \| A - U^{(t)} (V^{(t-1)})^T \|_C = \| A - U^{(t)} (V^{(t)})^T \|_C = \\
            \| A - U^{(t+1)} (V^{(t)})^T \|_C
        \end{equation*}
        for all $t \in \mathbb N$. Thus, \lem{alt_basic2} implies that
        \begin{equation*}
            S(A, U^{(1)}, V^{(0)}) \supset S(A, U^{(1)}, V^{(1)}) \supset S(A, U^{(2)}, V^{(1)}) \supset S(A, U^{(2)}, V^{(2)}) \supset \dots
        \end{equation*}
        Since all sets in this sequence are finite and non-empty, there is  $t \in \mathbb N$  such that $S(A, U^{(t+1)}, V^{(t)}) = S(A, U^{(t+1)}, V^{(t+1)})$.
        \lem{alt_basic3} implies that $(A, U^{(t+1)}, V^{(t)})$ possesses a 2-way alternance of rank $r$. Applying \lem{alt_basic4} $t$ times we obtain that $(A, U^{(1)}, V^{(0)}) = (A, \phi(T,V), V)$ possesses a 2-way alternance of rank $r$.
    \end{proof}
\end{lemma}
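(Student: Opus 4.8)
The plan is to reduce the statement to the already-proved structural lemmas \lem{alt_basic3} and \lem{alt_basic4} by running the alternating minimization method forward from $V^{(0)}=V$ until the active sets freeze, and then propagating the resulting alternance backward to the initial triple. First I would generate the sequences $\{U^{(t)}\}_{t\in\mathbb N}$ and $\{V^{(t)}\}_{t\in\mathbb N}$ with $V^{(0)}=V$, so that $U^{(1)}=\phi(A,V)$ and the hypothesis reads $\|A-U^{(1)}(V^{(0)})^T\|_C=E(A,V)$.

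The first genuine step is to show that \emph{every} residual norm along the interleaved chain equals $E(A,V)$. By \lem{basic_prop}~\ref{basic_prop_i} the interleaved quantities
\[
    \|A-U^{(t)}(V^{(t-1)})^T\|_C \ge \|A-U^{(t)}(V^{(t)})^T\|_C \ge \|A-U^{(t+1)}(V^{(t)})^T\|_C
\]
form a non-increasing sequence whose limit is $E(A,V)$ by the definition of $E$. Since its very first term already equals $E(A,V)$ by hypothesis, a non-increasing sequence pinned at its limit from the outset must be constant; hence all the displayed norms coincide for every $t$.

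With consecutive norms equal, \lem{alt_basic2} (together with its transpose, obtained by applying the same lemma to $A^T$ with the roles of $U$ and $V$ interchanged, using $\psi(A,U)=\phi(A^T,U)$) yields the nested chain
\[
    S(A,U^{(1)},V^{(0)}) \supset S(A,U^{(1)},V^{(1)}) \supset S(A,U^{(2)},V^{(1)}) \supset S(A,U^{(2)},V^{(2)}) \supset \cdots
\]
Because each $S$-set is a non-empty subset of the finite index set $\{1,\dots,m\}\times\{1,\dots,n\}$, the chain cannot strictly decrease forever, so there is some $t$ with $S(A,U^{(t+1)},V^{(t)})=S(A,U^{(t+1)},V^{(t+1)})$. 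At this index the hypotheses of \lem{alt_basic3}---equal Chebyshev norms and equal $S$-sets---are met, so $(A,U^{(t+1)},V^{(t)})$ possesses a $2$-way alternance of rank $r$.

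Finally I would transport this alternance back to $(A,\phi(A,V),V)=(A,U^{(1)},V^{(0)})$. A single application of \lem{alt_basic4} with initial matrix $V^{(s-1)}$ deduces the alternance for $(A,U^{(s)},V^{(s-1)})$ from that of $(A,U^{(s+1)},V^{(s)})$, provided the relevant Chebyshev norms agree; applying it for $s=t,t-1,\dots,1$ walks the alternance down to the initial triple. The step I expect to be the crux is recognizing that one should \emph{not} try to exhibit the alternance directly at $(A,\phi(A,V),V)$, but instead iterate the method until the active sets stabilize (\lem{alt_basic3}) and then use \lem{alt_basic4} purely as a backward-propagation device; checking that the norm equalities demanded at every rung of this induction all descend from the single collapse established in the first step is the main bookkeeping obstacle.
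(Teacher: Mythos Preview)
Your proposal is correct and follows the paper's argument almost verbatim: run the alternating minimization to freeze the interleaved norms at $E(A,V)$, use \lem{alt_basic2} (and its transposed variant) to obtain the nested chain of $S$-sets, stabilize to invoke \lem{alt_basic3}, and then back-propagate via repeated applications of \lem{alt_basic4}. Your explicit justification of the norm collapse and of the need for the transposed form of \lem{alt_basic2} is slightly more detailed than the paper's ``Clearly'', but the structure is identical.
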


\lem{existence_alternance1} implies that if the sequences $\{U^{(t)}\}_{t \in \mathbb N}$ and $\{V^{(t)}\}_{t \in \mathbb N}$ are constructed by the alternating minimization method, then a limit point $\Xi$ of the sequence $\Xi^{(t)} = V^{(t)} / \|V^{(t)}\|_C$ satisfies
\begin{equation*}
    E(A,\Xi) = \|A - \phi(A,\Xi) \Xi^T\|_C = E(A,V^{(0)}).
\end{equation*}
In turn, \lem{existence_alternance2} implies that if this is the case, then $(A,\phi(\Xi,V),\Xi)$ possesses a 2-way alternance of rank $r$. Now we are ready to prove the main results of \sect{rank_r_alternance}.

\begin{proof}[Proof of \thm{necessary_cond}]
    Clearly, $$\|A - \hat{U}\hat{V}^T\|_C \ge \|A - \phi(A, \hat{V}) \hat{V}^T\|_C \ge E(A, \hat{V}) \ge \|A - \hat{U}\hat{V}^T\|_C,$$
    where the last inequality is due to $\hat{U}$ and $\hat{V}$ provide the optimal solution. Therefore, $\|A - \phi(A, \hat{V}) \hat{V}^T\|_C = E(A, \hat{V})$ and by \lem{existence_alternance2} the triple $(A, \phi(A, \hat{V}), \hat{V})$ possesses a 2-way alternance of rank $r$. Note that $\mathcal{I}(A, \phi(A, \hat{V}), \hat{V}) \subset \mathcal{I}(A, \hat{U}, \hat{V})$ by the definition of $\phi$. Let us denote $\tilde{U} = \phi(A, \hat{V})$. For $i \in \mathcal{I}(A, \phi(A, \hat{V}), \hat{V})$ we have
    \begin{equation*}
        \|a^i - \hat{V}\hat{u}^i\|_\infty = \|a^i - \hat{V}\tilde{u}^i\|_\infty = \min\limits_{u \in \mathbb{R}^r} \|a^i - \hat{V} u\|_\infty.
    \end{equation*}
    Due to uniqueness of the solution (see \thm{exists_unique_cont}), $\hat{u}^i = \tilde{u}^i$ for $i \in \mathcal{I}(A, \phi(A, \hat{V}), \hat{V})$. Since in \dfn{alternance_rank_r} we can take into account only rows of the matrix $U$ such that $i \in \mathcal{I}(A, \phi(A, V), V)$, we obtain that the alternance for the triple $(A, \phi(A, \hat{V}), \hat{V})$ is the alternance for the triple $(A, \hat{U}, \hat{V})$.
\end{proof}

\begin{proof}[Proof of \thm{2d_alternance}]
Directly follows from \lem{existence_alternance1} and \lem{existence_alternance2}.
\end{proof}

\thm{necessary_cond} implies that the presence of a 2-way alternance of rank $r$ is the necessary condition of the optimal Chebyshev approximation. In turn, from \thm{2d_alternance} it follows that all limit points of the alternating minimization method satisfy this condition.

\bmsection{Proof of LEMMA~\ref{lemma:interchange_sign_alternate}}
\label{sec:appendix_proof}
In this section, we present the proof for \lem{interchange_sign_alternate}.
First, let us introduce some notations. Let $V \in \mathbb{R}^{(r+2)\times r}$ be a Chebyshev matrix and $w \in \mathbb{R}^{r+2}$ be a vector. We denote
\begin{equation*}
    D_j = \det{V(( 1, \dots, j-1, j+1, \dots, r + 1))},
\end{equation*}
\begin{equation*}
    \hat{D}_j^k = \begin{cases}
        \det{V(\{ 1, 2, \dots, k-1, r+2, k+1, \dots, j-1, j+1, \dots,  r + 1 \})}, & k < j, \\
        \det{V(\{ 1, 2, \dots, j-1, j+1, \dots, k-1, r+2, k+1, \dots,  r + 1 \})}, & k > j, \\
        D_j, & k = j.
    \end{cases}
\end{equation*}
It is easy to see that
\begin{equation*}
    \hat{D}_i^j = (-1)^{i - j + 1} \hat{D}_j^i, \quad i \neq j.
\end{equation*}
We denote by $\hat{w}^k \in \mathbb{R}^{r+1}$ the vector such that
\begin{equation*}
    \hat{w}_j^k = \begin{cases}
    w_j, & j \neq k \\
    w_{r+2}, & j=k.
    \end{cases}
\end{equation*}

\begin{lemma}
\label{lemma:determinants_lemma}
For any pairwise distinct $i$, $j$ and $k$ the conditions
\begin{equation*}
    \sign (\hat{D}_i^k D_i \hat{D}_j^k D_j) = -1\text{ and }\sign (\hat{D}_k^i D_k \hat{D}_j^i D_j) = -1
\end{equation*}
cannot be satisfied simultaneously.
\begin{proof}
Let us assume that $\sign (\hat{D}_i^k D_i \hat{D}_j^k D_j) = -1$ and $\sign (\hat{D}_k^i D_k \hat{D}_j^i D_j) = -1$. Since $\hat{D}_k^i = (-1)^{k - i + 1} \hat{D}_i^k$, we have
\begin{eqnarray*}
    \sign (\hat{D}_i^k D_i \hat{D}_j^k D_j) = -1 \\
    \sign (\hat{D}_i^k D_k \hat{D}_j^i D_j) = (-1)^{k-i}. \\
\end{eqnarray*}
Let $\sign(D_j \hat{D}_i^k) = \delta$, then
\begin{eqnarray*}
\sign(D_i \hat{D}_j^k) = -\delta \\
\sign(D_k \hat{D}_j^i) = (-1)^{k-i}\delta \\
\sign(D_j \hat{D}_i^k) = \delta. \\
\end{eqnarray*}
Let $\mathcal{L}$ be the the linear hull of $v^1, \dots, v^{r+1}$, without $v^i, v^j$ and $v^k$. Since $V$ is Chebyshev, the vector $v^k$ can be uniquely represented as
\begin{equation}
\label{eq:relation_ijk_1}
v^k = \alpha v^i + \gamma v^j + z_1,
\end{equation}
where $z_1 \in \mathcal{L}$. We also represent
\begin{equation}
\label{eq:relation_ijk_2}
v^i = \beta v^k + \tau v^{r+2} + z_2,
\end{equation} 
where $z_2 \in \mathcal{L}$.

From \eq{relation_ijk_1} and the properties of the determinant we obtain that
\begin{equation*}
D_i = \det \begin{bmatrix}
(v^1)^T \\
(v^2)^T \\
\vdots \\
(v^{i-1})^T \\
(v^{i+1})^T \\
\vdots \\
(v^{k-1})^T \\
(v^{k})^T \\
(v^{k+1})^T \\
\vdots \\
(v^{r+1})^T
\end{bmatrix} = \det \begin{bmatrix}
(v^1)^T \\
(v^2)^T \\
\vdots \\
(v^{i-1})^T \\
(v^{i+1})^T \\
\vdots \\
(v^{k-1})^T \\
(\alpha v^i + \gamma v^j + z_1)^T \\
(v^{i+1})^T \\
\vdots \\
(v^{r+1})^T
\end{bmatrix} = \det \begin{bmatrix}
(v^1)^T \\
(v^2)^T \\
\vdots \\
(v^{i-1})^T \\
(v^{i+1})^T \\
\vdots \\
(v^{k-1})^T \\
\alpha (v^i)^T \\
(v^{k+1})^T \\
\vdots \\
(v^{r+1})^T
\end{bmatrix},
\end{equation*}
then $D_i = \alpha (-1)^{k-i+1} D_k$.
Similarly, from \eq{relation_ijk_2} and the properties of the determinant, we can deduce that $\hat{D}_j^k = -\beta \hat{D}_j^i$.
Hence,
\begin{equation*}
D_i \hat{D}_j^k = \alpha (-1)^{k-i+1} D_k (-1) \beta \hat{D}_j^i = \alpha \beta (-1)^{k-i} D_k \hat{D}_j^i.
\end{equation*}
Taking the signs from both sides, we get $-\delta = \sign(\alpha \beta)(-1)^{k-i} (-1)^{k-i} \delta$, whence $\sign(\alpha \beta) = -1$.

Note that the coefficients in \eq{relation_ijk_1} can be expressed by Cramer's formulas, and due to $V$ being the Chebyshev matrix, we have $\alpha, \gamma \neq 0$. Then we can express the vector $v^j$ from \eq{relation_ijk_1} as $v^j = \dfrac{1}{\gamma}v^k - \dfrac{\alpha}{\gamma} v^i - \dfrac{1}{\gamma}z_1$.
Then
\begin{equation}
\label{eq:relation_ijk_4}
D_i = -\dfrac{\alpha}{\gamma}(-1)^{j-i+1}D_j = \dfrac{\alpha}{\gamma}(-1)^{j-i}D_j.
\end{equation}

From \eq{relation_ijk_1} and \eq{relation_ijk_2} we have $v^i = \beta(\alpha v^i + \gamma v^j + z_1) + \tau v^{r+2} + z_2$,
hence
\begin{equation}
\label{eq:relation_ijk_3}
(1 - \alpha \beta) v^i = \beta\gamma v^j + \beta z_1 + \tau v^{r+2} + z_2.
\end{equation}
Since $\sign(\alpha \beta) = -1$, we have $1 - \alpha\beta \neq 0$. Then
\begin{equation*}
v^i = \dfrac{\beta\gamma}{1 - \alpha \beta} v^j + \dfrac{\beta z_1 + \tau v^{r+2} + z_2}{1 - \alpha \beta}.
\end{equation*}
and we again derive from the properties of the determinant that
\begin{equation}
\label{eq:relation_ijk_5}
\hat{D}_j^k = \dfrac{\beta\gamma}{1 - \alpha \beta} (-1)^{j-i+1}\hat{D}_i^k,
\end{equation}
so we have from \eq{relation_ijk_4} and \eq{relation_ijk_5} that
\begin{equation*}
D_i \hat{D}_j^k = \dfrac{\alpha}{\gamma}(-1)^{j-i}D_j \dfrac{\beta\gamma}{1 - \alpha \beta} (-1)^{j-i+1}\hat{D}_i^k = \dfrac{\alpha\beta}{1 - \alpha \beta} (-1) D_j \hat{D}_i^k.
\end{equation*}
Taking the signs from both sides we get that
\begin{equation*}
-\delta = \dfrac{\sign(\alpha \beta)}{\sign(1 - \alpha\beta)}(-1)\delta,
\end{equation*}
therefore, $\sign(\alpha\beta) = \sign(1 - \alpha\beta) = -1$,
which is the contradiction.
\end{proof}
\end{lemma}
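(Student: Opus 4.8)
The plan is to argue by contradiction: assume both sign conditions hold, and derive an impossibility. The engine of the argument is the rigidity of the linear dependencies among the rows $v^1,\dots,v^{r+2}$ of the Chebyshev matrix $V$. By \dfn{chebyshev_matrix} every $r\times r$ submatrix of $V$ is nonsingular, so any $r$ of the $r+2$ rows form a basis of $\mathbb{R}^r$; consequently every row has a \emph{unique} expansion in any such basis, and every coefficient in such an expansion that multiplies a row whose deletion would leave a linearly dependent set is nonzero by Cramer's rule.

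Let $\mathcal{L}=\operatorname{span}\{v^\ell:\ell\in\{1,\dots,r+1\}\setminus\{i,j,k\}\}$, which is $(r-2)$-dimensional. First I would record the two expansions
\begin{equation*}
    v^k=\alpha v^i+\gamma v^j+z_1,\qquad v^i=\beta v^k+\tau v^{r+2}+z_2,\qquad z_1,z_2\in\mathcal{L},
\end{equation*}
where $\alpha,\gamma,\beta,\tau\neq 0$. Substituting these relations into the determinants in the statement and using multilinearity and antisymmetry — every term involving a row already present in the relevant submatrix vanishes, after which the substituted row is slid back to its canonical position — I would extract the four identities
\begin{equation*}
    D_i=\alpha(-1)^{k-i+1}D_k,\qquad \hat{D}_j^k=-\beta\,\hat{D}_j^i,
\end{equation*}
\begin{equation*}
    D_i=\tfrac{\alpha}{\gamma}(-1)^{j-i}D_j,\qquad \hat{D}_j^k=\tfrac{\beta\gamma}{1-\alpha\beta}(-1)^{j-i+1}\hat{D}_i^k.
\end{equation*}
The third uses the expansion of $v^j$ obtained by solving the first relation for $v^j$, and the fourth uses the expansion of $v^i$ obtained by substituting the first relation into the second (this is legitimate once $1-\alpha\beta\neq 0$, which will itself follow from the first sign condition).

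Finally I would translate the two hypotheses. Put $\delta=\sign(D_j\hat{D}_i^k)$. Since $\hat{D}_i^k D_i\hat{D}_j^k D_j=(D_i\hat{D}_j^k)(D_j\hat{D}_i^k)$, the first condition gives $\sign(D_i\hat{D}_j^k)=-\delta$; and since $\hat{D}_k^i=(-1)^{k-i+1}\hat{D}_i^k$, the second condition gives $\sign(D_k\hat{D}_j^i)=(-1)^{k-i}\delta$. Multiplying the first two identities yields $D_i\hat{D}_j^k=\alpha\beta(-1)^{k-i}D_k\hat{D}_j^i$, so passing to signs converts the two hypotheses into $\sign(\alpha\beta)=-1$; multiplying the last two identities yields $D_i\hat{D}_j^k=\tfrac{\alpha\beta}{1-\alpha\beta}(-1)\,D_j\hat{D}_i^k$, so passing to signs converts the first hypothesis into $\sign(\alpha\beta)=\sign(1-\alpha\beta)$. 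Hence $\sign(1-\alpha\beta)=-1$; but $\sign(\alpha\beta)=-1$ means $\alpha\beta<0$, so $1-\alpha\beta>1>0$ — a contradiction.

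The real difficulty here is not the strategy but the sign bookkeeping inside the four identities: after replacing a row by its expansion one must move that row back to its canonical slot, producing a power of $-1$ equal to the displacement, and in the quantities $\hat{D}$ one must also keep track of the slot occupied by the index $r+2$ (recorded by $\hat{D}_i^k=(-1)^{i-k+1}\hat{D}_k^i$); making all of these powers of $-1$ cancel exactly as claimed is the delicate step. A more conceptual alternative, carrying comparable sign overhead, is to use Jacobi's identity (in the spirit of \lem{orthog_complement}) to write every $D$ and every $\hat{D}$, up to one common nonzero constant and a controllable sign, as a $2\times 2$ Plücker coordinate of a $2\times(r+2)$ matrix whose rows span the orthogonal complement of the column space of $V$, and then to invoke the three-term Plücker relation among the four indices $i,j,k,r+2$.
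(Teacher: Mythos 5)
Your proposal is correct and follows essentially the same route as the paper's own proof: the same two expansions $v^k=\alpha v^i+\gamma v^j+z_1$ and $v^i=\beta v^k+\tau v^{r+2}+z_2$, the same four determinant identities, and the same final contradiction $\sign(\alpha\beta)=\sign(1-\alpha\beta)=-1$ (your explicit remark that $\alpha\beta<0$ forces $1-\alpha\beta>1>0$ is a welcome clarification of the paper's terse ``which is the contradiction''). The Pl\"ucker-relation alternative you sketch at the end is plausible but is not the argument the paper uses.
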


Our goal is to show that there is $k$ such that the signs in the sequence
\begin{equation*}
    \hat{w}^k_1 \hat{D}^k_1, \hat{w}^k_2 \hat{D}^k_2, \dots, \hat{w}^k_{r+1} \hat{D}^k_{r+1}
\end{equation*}
alternate. It is equivalent to the property $\sign {\hat{w}^k_i \hat{D}^k_i \hat{w}^k_j \hat{D}^k_j} = (-1)^{i-j}$ for all $i$ and $j$. The next lemma shows that if this property is not satisfied for some $k$ and a pair of positions $(i, j)$, then it is necessarily satisfied for $i$ or $j$ (or even both) and another pair of positions.

\begin{lemma}
\label{lemma:lemma_table_properties}
Let the signs in the sequence
\begin{equation*}
w_1 D_1, w_2 D_2, \dots, w_{r+1} D_{r+1}
\end{equation*}
alternate. Let
\begin{equation*}
\sign(\hat{w}_i^k \hat{D}_i^k \hat{w}_j^k \hat{D}_j^k) = (-1)^{i-j+1}.
\end{equation*}
Then
\begin{itemize}
\item if $i=k$, then
$\sign(\hat{w}_i^j \hat{D}_i^j \hat{w}_j^j \hat{D}_j^j) = (-1)^{i-j}$;
\item if $j=k$, then
$\sign(\hat{w}_j^i \hat{D}_j^i \hat{w}_i^i \hat{D}_i^i) = (-1)^{i-j}$;
\item if $i \neq k$ and $j \neq k$, then
$\sign(\hat{w}_k^i \hat{D}_k^i \hat{w}_j^i \hat{D}_j^i) = (-1)^{k-j}$
and
$\sign(\hat{w}_k^j \hat{D}_k^j \hat{w}_i^j \hat{D}_i^j) = (-1)^{k-i}$.
\end{itemize}
\begin{proof}
Let $i=j$. Then the condition of the lemma cannot be fulfilled, since
\begin{equation*}
\sign(\hat{w}_i^k \hat{D}_i^k \hat{w}_i^k \hat{D}_i^k) = 1 \neq (-1)^{i-i+1} = -1.
\end{equation*}

Let $i=k \neq j$. Then the condition of the lemma can be written as
\begin{equation*}
    \sign(\hat{w}_k^k \hat{D}_k^k \hat{w}_j^k \hat{D}_j^k) = (-1)^{k-j+1},
\end{equation*}
which by the definitions of $\hat{D}_k^k$ and $\hat{w}^k$ can be written as
\begin{equation}
\label{eq:trr1}
    \sign(w_{r+2} D_k w_j \hat{D}_j^k) = (-1)^{k-j+1}.
\end{equation}
Since the signs in the sequence $w_1 D_1, w_2 D_2, \dots, w_{r+1} D_{r+1}$
alternate,
\begin{equation}
\label{eq:trr2}
    \sign(w_k D_k w_j D_j) = (-1)^{k-j}.
\end{equation}
Let us multiply \eq{trr1} and \eq{trr2}. Then
\begin{multline*}
\sign(w_{r+2} w_k D_j \hat{D}_j^k) = \sign(w_k D_k w_j D_j) \sign(w_{r+2} D_k w_j \hat{D}_j^k) = \\
(-1)^{k-j} (-1)^{k-j+1} = -1.
\end{multline*}
Since $\hat{D}_k^j = (-1)^{k-j+1} \hat{D}_j^k$, we have that
\begin{equation*}
\sign(w_{r+2} D_j w_k \hat{D}_k^j) = (-1) (-1)^{k-j+1} = (-1)^{k-j}.
\end{equation*}
It remains to note that $\hat{w}_j^j = w_{r+2}$, $\hat{w}_k^j = w_k$ and $\hat{D}_j^j = D_j$, whence
\begin{equation*}
    \sign(\hat{w}_j^j \hat{D}_j^j \hat{w}_k^j \hat{D}_k^j) = (-1)^{k-j}.
\end{equation*}
Thus, the first part of the lemma is proven. The second part can be obtained from the first one by rearranging the factors.

Let us prove the third part. Let $i$, $j$ and $k$ be pairwise distinct. Then the condition of the lemma can be written as
\begin{equation}
\label{eq:smth1}
\sign(w_i \hat{D}_i^k w_j \hat{D}_j^k) = (-1)^{i-j+1}.
\end{equation}
Let us prove that in this case $\sign(w_k \hat{D}_k^i w_j \hat{D}_j^i) = (-1)^{k-j}$.
On the contrary, let
\begin{equation}
\label{eq:smth2}
\sign(w_k \hat{D}_k^i w_j \hat{D}_j^i) = (-1)^{k-j+1}.
\end{equation}
Multiplying \eq{smth1} by the alternance condition $\sign(w_i D_i w_j D_j) = (-1)^{i-j}$,
we get that
\begin{equation}
\label{eq:contr_1}
\sign(\hat{D}_i^k D_i \hat{D}_j^k D_j) = \sign(w_i \hat{D}_i^k w_j \hat{D}_j^k) \sign(w_i D_i w_j D_j) = (-1)^{i-j+1}(-1)^{i-j} = -1.
\end{equation}
Similarly, multiplying \eq{smth2} by $\sign(w_k D_k w_j D_j) = (-1)^{k-j}$
we get that
\begin{equation}
\label{eq:contr_2}
\sign(\hat{D}_k^i D_k \hat{D}_j^i D_j) = \sign(w_k \hat{D}_k^i w_j \hat{D}_j^i) \sign(w_k D_k w_j D_j) = (-1)^{k-j+1}(-1)^{k-j} = -1.
\end{equation}
It remains to note that \eq{contr_1} and \eq{contr_2} contradict \lem{determinants_lemma}, whence
\begin{equation}
\label{eq:smth3}
\sign(w_k \hat{D}_k^i w_j \hat{D}_j^i) = (-1)^{k-j}.
\end{equation}
After replacing $w_k$ by $\hat{w}_k^i$ and $w_j$ by $\hat{w}_j^i$ using the corresponding definition, we obtain that \eq{smth3} becomes the first part of the third statement of the lemma. The second part follows from the first one, since $i$ and $j$ are symmetric in the condition and statement of the lemma.
\end{proof}
\end{lemma}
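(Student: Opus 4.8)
The plan is to turn every hypothesis and every conclusion into a single sign equation on a product of $w$-entries and signed $r\times r$ minors, and then eliminate the $w$-entries by playing the three given conditions against the alternance hypothesis. Writing the alternance of $w_1D_1,\dots,w_{r+1}D_{r+1}$ as $\sign(w_iD_iw_jD_j)=(-1)^{i-j}$, and recalling $\hat{w}^k_k=w_{r+2}$, $\hat{D}^k_k=D_k$ together with the antisymmetry relation $\hat{D}_i^j=(-1)^{i-j+1}\hat{D}_j^i$, the statement reduces to an exercise in cancelling known sign factors. The hypothesis $\sign(\hat{w}_i^k\hat{D}_i^k\hat{w}_j^k\hat{D}_j^k)=(-1)^{i-j+1}$ records that the pair $(i,j)$ is \emph{mis-signed} under the substitution at position $k$, and each conclusion asserts that substituting at a \emph{different} position restores the correct sign for an appropriate pair.

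First I would dispose of the degenerate and boundary cases. If $i=j$ the left-hand side of the hypothesis is a square, hence positive, contradicting the demanded value $(-1)^{i-j+1}=-1$, so this case is vacuous. For $i=k$ (the case $j=k$ being identical after relabelling) the overwritten entry is $\hat{w}_k^k=w_{r+2}$ and $\hat{D}_k^k=D_k$, so the hypothesis reads $\sign(w_{r+2}D_k\,w_j\hat{D}_j^k)=(-1)^{k-j+1}$. Multiplying this by the alternance condition $\sign(w_kD_kw_jD_j)=(-1)^{k-j}$ cancels $D_k^2$ and $w_j^2$ and yields $\sign(w_{r+2}w_k\hat{D}_j^kD_j)=-1$; rewriting $\hat{D}_j^k$ as $(-1)^{k-j+1}\hat{D}_k^j$ through the antisymmetry relation and reading off $\hat{w}_j^j=w_{r+2}$, $\hat{w}_k^j=w_k$, $\hat{D}_j^j=D_j$ then gives exactly the claimed $\sign(\hat{w}_j^j\hat{D}_j^j\hat{w}_k^j\hat{D}_k^j)=(-1)^{k-j}$.

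The substantive case is $i,j,k$ pairwise distinct, and here I would argue by contradiction through \lem{determinants_lemma}. Since the three indices are distinct, none of the overwritten coordinates intervenes, so the $\hat{w}$'s reduce to the corresponding $w$'s. Suppose the first asserted conclusion fails, i.e. $\sign(w_k\hat{D}_k^iw_j\hat{D}_j^i)=(-1)^{k-j+1}$ instead of $(-1)^{k-j}$. Multiplying the hypothesis $\sign(w_i\hat{D}_i^kw_j\hat{D}_j^k)=(-1)^{i-j+1}$ by $\sign(w_iD_iw_jD_j)=(-1)^{i-j}$ clears the $w$-factors and produces the pure-minor condition $\sign(\hat{D}_i^kD_i\hat{D}_j^kD_j)=-1$; multiplying the contradiction hypothesis by $\sign(w_kD_kw_jD_j)=(-1)^{k-j}$ likewise produces $\sign(\hat{D}_k^iD_k\hat{D}_j^iD_j)=-1$. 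These are precisely the two minor-sign conditions that \lem{determinants_lemma} declares incompatible, so the assumption is false and the desired sign holds. The second assertion of this case follows from the first by the symmetry $i\leftrightarrow j$, which leaves both the hypothesis and the statement invariant.

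The only genuinely hard ingredient is \lem{determinants_lemma}, which I invoke as a black box in the pairwise-distinct case; it is there that the Chebyshev structure enters, through the unique linear relations expressing $v^k$ and $v^i$ in terms of $v^i,v^j,v^{r+2}$ and the complementary rows. Within the present lemma the real work is purely combinatorial sign-chasing, and the main pitfall is bookkeeping: one must keep the parity exponents $(-1)^{i-j}$, $(-1)^{k-j}$, $(-1)^{k-i}$ straight through each antisymmetry substitution and never lose track of which coordinate has been overwritten by $w_{r+2}$ (in the $\hat{w}$'s) or by row $r+2$ (in the $\hat{D}$'s).
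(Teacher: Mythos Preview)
Your proposal is correct and follows essentially the same route as the paper: you dispose of $i=j$, handle $i=k$ (and $j=k$ by symmetry) by multiplying the hypothesis against the alternance relation and applying the antisymmetry $\hat{D}_i^j=(-1)^{i-j+1}\hat{D}_j^i$, and in the pairwise-distinct case you argue by contradiction, clearing the $w$-factors to reduce to the two pure-minor sign conditions that \lem{determinants_lemma} forbids. The structure, the key cancellations, and the reliance on \lem{determinants_lemma} as the only nontrivial input all match the paper's proof.
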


The next lemma shows that if the first condition of \lem{lemma_table_properties} is met, then its statement leads to the fact that there is always some row such that the signs alternate.

\begin{lemma}
\label{lemma:table_alternating_signs}
Let a matrix $S \in \mathbb{R}^{n \times n}$ is such that $s_{ki} \in \{-1, 1\}$ and satisfies the following property: if for a triple $(i, j, k)$, where $1 \le i, j, k \le n$, we have $s_{ki} s_{kj} = (-1)^{i-j+1}$, then
\begin{itemize}
\item if $i=k$, then $s_{jk} s_{jj} = (-1)^{k-j}$;
\item if $j=k$, then $s_{ik} s_{ii} = (-1)^{k-i}$;
\item if $i$, $j$, and $k$ are pairwise distinct, then
$s_{ik} s_{ij} = (-1)^{k-j}$
and
$s_{jk} s_{ji} = (-1)^{k-i}$.
\end{itemize}
Then the matrix $S$ has a row with the alternating signs.
\begin{proof}
Let us prove the lemma by induction. Let $n=2$. If $s_{11} s_{12} = -1$, then the first row has the alternating signs. Otherwise, $s_{11} s_{12} = 1$ and the condition of the property in the lemma is fulfilled for $i=1$, $j=2$, $k=1$. Then
$s_{21} s_{22} = -1$
and the second row has alternating signs.

Let us assume the statement is true for $n-1$ and prove it for the matrix of size $n$. If the condition of the lemma is fulfilled for a matrix $S$, it is also satisfied for its principal submatrix $\hat{S}$. By the induction hypothesis, $\hat{S}$ has a row with the number $t$ such that its elements have alternating signs. If $s_{t, n-1} s_{t, n} = -1$, then the signs alternate in the row number $t$ in the matrix $S$. Let $s_{t, n-1} s_{t, n} = 1$. Then, due to the alternation of signs in the row number $t$, we have $s_{t, i} s_{t, n} = (-1)^{i-n+1}$ for $i=1,\dots,n$ and $j=n$.
Then from the condition of the lemma for $i=t$ we have $s_{n,t} s_{n, n} = (-1)^{t-n}$ (first case)
and for $i\neq t$ we have $s_{n, t} s_{n, i} = (-1)^{t-i}$ (second case),
so
\begin{equation*}
s_{n, t} s_{n, i} = (-1)^{t-i}, \quad i=1,\dots,n,
\end{equation*}
which means the alternance of signs in the last row.
\end{proof}
\end{lemma}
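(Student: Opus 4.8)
\emph{Proof proposal.} The plan is to prove the lemma by induction on $n$. The engine is the observation that a row which \emph{almost} alternates must either alternate outright or have its single defect sit at the very last coordinate, in which case the closure hypothesis of the lemma hands us an alternating row among the full set of indices. Throughout I use that a row $t$ has alternating signs precisely when $s_{t,i} s_{t,j} = (-1)^{i-j}$ for all $i, j$, equivalently $s_{t,i} s_{t,i+1} = -1$ for all admissible $i$. For the base case $n = 2$: if $s_{11} s_{12} = -1$ the first row already alternates; otherwise $s_{11} s_{12} = 1 = (-1)^{1-2+1}$, so the hypothesis applies to the triple $(i,j,k) = (1,2,1)$, which falls in the case $i = k$ and yields $s_{21} s_{22} = (-1)^{1-2} = -1$.

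For the inductive step, assume the statement for size $n-1$. First I would note that the closure property passes to the principal submatrix $\hat{S} \in \mathbb{R}^{(n-1)\times(n-1)}$ obtained by deleting the last row and column, since every entry occurring in the hypothesis and in each conclusion has both of its indices among $i$, $j$, $k$, hence in $\{1,\dots,n-1\}$. By the inductive hypothesis $\hat{S}$ has a row $t \le n-1$ with $s_{t,i} s_{t,j} = (-1)^{i-j}$ for all $i, j \le n-1$. If moreover $s_{t,n-1} s_{t,n} = -1$, this alternation extends across the whole index range and row $t$ of $S$ is the desired row. Otherwise $s_{t,n-1} s_{t,n} = 1$, so $s_{t,n} = s_{t,n-1}$ and therefore $s_{t,i} s_{t,n} = s_{t,i} s_{t,n-1} = (-1)^{i-n+1}$ for every $i \in \{1,\dots,n-1\}$.

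In this remaining case I would feed each triple $(i, n, t)$ with $i \le n-1$ into the hypothesis: the distinguished index is $k = t$, the middle index is $j = n$, and the identity $s_{ti} s_{tn} = (-1)^{i-n+1}$ needed to trigger the implication has just been established. When $i = t$ this is the case $i = k$ and gives $s_{n,t} s_{n,n} = (-1)^{t-n}$; when $i \ne t$ the indices $i, n, t$ are pairwise distinct, and the \emph{second} of the two conclusions of that case, with the roles $i \mapsto i$, $j \mapsto n$, $k \mapsto t$, gives $s_{n,t} s_{n,i} = (-1)^{t-i}$. Together these say $s_{n,t} s_{n,i} = (-1)^{t-i}$ for every $i \in \{1,\dots,n\}$, hence $s_{n,i} = s_{n,t} (-1)^{t-i}$ and so $s_{n,i} s_{n,i+1} = -1$ for all $i$: the last row of $S$ alternates, completing the induction.

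I expect the only delicate point to be the index bookkeeping in the final paragraph --- in particular, recognizing that the defect relation $s_{t,i} s_{t,n} = (-1)^{i-n+1}$ is exactly of the shape $(-1)^{i-j+1}$ demanded by the hypothesis with $j = n$, and selecting, from the pairwise-distinct case, the conclusion that constrains row $n$ rather than row $i$. I also want to double-check that restricting to a principal submatrix genuinely preserves the hypothesis, but this is immediate once one lists the entries involved. Everything else is routine sign arithmetic, and no structural obstacle is anticipated.
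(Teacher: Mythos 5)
Your proposal is correct and follows essentially the same route as the paper's own proof: the same induction on $n$, the same restriction of the closure property to the principal submatrix, and the same final step of feeding the triples $(i,n,t)$ into the hypothesis to force the last row to alternate. No discrepancies to report.
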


\begin{proof}[Proof of \lem{interchange_sign_alternate}]
Let $\hat{V} \in \mathbb{R}^{(r+1)\times r}$ and $h \in \mathbb{R}^r$ are such that $V = \begin{bmatrix}
    \hat{V}\\
    h
\end{bmatrix}$ is Chebyshev. Let also $\hat{w} \in \mathbb{R}^{r+1}$ be a vector with non-zero components and $\xi \in \mathbb{R}$, $\xi \neq 0$. We denote $w = \begin{bmatrix}
    \hat{w}^T & \xi
\end{bmatrix}^T$. In the notations of \sect{appendix_proof} we define the matrix $S \in \mathbb{R}^{(r+1)\times (r+1)}$ such that $s_{ki} = \sign (\hat{w}_i^k \hat{D}_i^k)$. It is easy to see that the first condition of \lem{lemma_table_properties} is met, from which the property of \lem{table_alternating_signs} follows, which in turn implies the statement of \lem{interchange_sign_alternate}.
\end{proof}

\end{document}